\definecolor{darkblue}{rgb}{0.0,0.0,0.3}
\newsavebox{\@brx}
\newcommand{\llangle}[1][]{\savebox{\@brx}{\(\m@th{#1\langle}\)}
  \mathopen{\copy\@brx\mkern2mu\kern-0.9\wd\@brx\usebox{\@brx}}}
\newcommand{\rrangle}[1][]{\savebox{\@brx}{\(\m@th{#1\rangle}\)}
  \mathclose{\copy\@brx\mkern2mu\kern-0.9\wd\@brx\usebox{\@brx}}}
\theoremstyle{plain}
\newtheorem{theorem}{Theorem}
\newtheorem{lemma}[theorem]{Lemma}
\newtheorem{proposition}[theorem]{Proposition}
\newtheorem{corollary}[theorem]{Corollary}
\theoremstyle{definition}
\newtheorem{definition}[theorem]{Definition}
\newtheorem{example}[theorem]{Example}
\newtheorem{remark}[theorem]{Remark}
\newtheorem{rem}[theorem]{Remark}
\theoremstyle{remark}
\numberwithin{theorem}{section}
\numberwithin{equation}{section}
\newcommand{\C}{\mathbb{C}}
\newcommand{\Z}{\mathbb{Z}}
\newcommand{\mc}[1]{\mathcal{#1}}
\newcommand{\mS}{\mc{S}}
\newcommand{\mM}{\mc{M}}
\newcommand{\ov}{\overline}
\newcommand{\oo}{\emptyset}
\newcommand{\eps}{\varepsilon}
\begin{document}

\title[$C^*$-simplicity of HNN~extensions and groups~acting~on~trees]{$C^*$-simplicity of HNN~extensions and groups~acting~on~trees}

\author[R.\ S.\ Bryder]{Rasmus Sylvester Bryder}
\address{Department of Mathematics\\University of Copenhagen\\Universitetsparken 5\\2100 Copenhagen\\Denmark}
\email{rbryder@gmail.com}

\author[N.\ A.\ Ivanov]{Nikolay A.\ Ivanov}
\address{Faculty of Mathematics and Informatics\\University of Sofia\\blvd.\ James Bourchier 5\\BG-1164 Sofia\\Bulgaria}
\email{nikolay.antonov.ivanov@gmail.com}

\author[T.\ Omland]{Tron Omland}
%\address{Department of Mathematics\\University of Oslo\\P.O.Box 1053 Blindern\\NO-0316 Oslo\\Norway}
\address{Department of Mathematics\\University of Oslo\\NO-0316 Oslo\\Norway
\and
Department of Computer Science\\Oslo Metropolitan University\\NO-0130 Oslo\\Norway}
\email{trono@math.uio.no}

\begin{abstract}
We study non-ascending HNN extensions acting on their Bass-Serre trees,
and characterize $C^*$-simplicity and the unique trace property by means of the kernel and quasi-kernels of the HNN extension in question.
We also present a concrete example of an HNN extension that is a new example of a group that is not $C^*$-simple but does have the unique trace property.
Additionally, we include certain more general results, mostly based on previous work of various authors,
concerning $C^*$-simplicity of groups admitting extreme boundary actions, and in particular, groups acting on trees.
%necessary and sufficient criteria for such groups to be $C^*$-simple or have the unique trace property.
\end{abstract}

\date{April 12, 2019}
%\date{\today}
\subjclass[2010]{22D25, 20E06 (Primary) 46L05, 20E08 (Secondary)}
\keywords{$C^*$-simplicity, HNN extension}
\thanks{The first author is supported by a PhD stipend from the Danish National Research Foundation (DNRF) through the Centre for Symmetry and Deformation at University of Copenhagen.
The second author is partially supported by grant 80.10-120/2017 of the Research Fund of the University of Sofia.
The third author is funded by the Research Council of Norway through FRINATEK, project no.~240913.}
\maketitle

\section{Introduction}

A discrete group is said to be \emph{$C^*$-simple} when its reduced $C^*$-algebra, i.e., the $C^*$-algebra associated to its left regular representation is simple.
This property for discrete groups found its primus motor in a paper by Powers, who proved in 1975 that the non-abelian free group on two generators is $C^*$-simple \cite{powersfreegroup}.
Since then, many other examples of $C^*$-simple groups have been found (see Pierre de la Harpe's survey \cite{delaharpesurvey}).
What these groups have in common is that they are all proven to be $C^*$-simple by using variations of Powers' technique.
A common denominator for $C^*$-simple groups is that they are highly non-amenable, in the sense that they have \emph{trivial amenable radical}, i.e., they admit no normal non-trivial amenable subgroups.
Another property related to $C^*$-simplicity of a discrete group is \emph{the unique trace property},
meaning that its reduced group $C^*$-algebra admits a unique tracial state.
The unique trace property also implies triviality of the amenable radical.
An early question of de~la~Harpe \cite{delaharpe1985} was whether there was any connection between the aforementioned properties.
Until recently, no characterizations of $C^*$-simplicity nor the unique trace property were known, nor did there exist examples of groups that only satisfied one of these two properties.
Only in 2014 did Kalantar and Kennedy obtain the first known characterization of $C^*$-simplicity \cite{kalantarkennedy}, and later that year,
Breuillard, Kalantar, Kennedy, and Ozawa gave a characterization of the unique trace property in terms of its amenable radical \cite{bkko}.
By means of the result of Kalantar and Kennedy, de~la~Harpe’s question was finally completely settled in 2015,
when Le~Boudec found examples of non-$C^*$-simple groups with the unique trace property, by examining actions of countable groups on trees \cite{leboudec}.

In the 1960’s, Furstenberg considered what came to be known as boundary actions \cite{furstenberg}, in order to investigate the irreducible unitary representations of a group.
A boundary action is a minimal action of a group on a compact Hausdorff space that is strongly proximal, i.e., the weak$^*$-closure of each orbit in the space of probability measures on the space contains a point mass.
Furstenberg proved that any locally compact group $G$ always admits a universal boundary action,
meaning a compact Hausdorff space $\partial_F G$ that maps uniquely $G$-equivariantly onto any other compact Hausdorff space with a boundary action of the group $G$.
This was the technology used in \cite{kalantarkennedy} to obtain a characterization of $C^*$-simplicity,
where the authors showed that a discrete group $\Gamma$ is $C^*$-simple if and only if the action of $\Gamma$ on the Furstenberg boundary $\partial_F\Gamma$ is topologically free.
The theory of boundary actions was also the main force used in \cite{bkko} to show that the unique trace property of a discrete group is equivalent to the group having trivial amenable radical.
%Even more recently, Hartman and Kalantar \cite{hartman-kalantar} have explained the distinction between $C^*$-simplicity and the unique trace property from the viewpoint of measurable dynamics.
Because of the above characterizations, as well as their close affiliation with non-amenability, $C^*$-simplicity and the unique trace property have since witnessed a spike in interest.

\medskip

In this article, we mainly study HNN extensions and obtain necessary and sufficient conditions for $C^*$-simplicity,
in which we take both a purely algebraic point of view, and a geometric point of view,
giving a partial answer to \cite[Problem~28]{delaharpesurvey}.
%Especially, we produce a concrete new example of a non-$C^*$-simple HNN extension with the unique trace property.

First, in Sections~\ref{boundaries} and~\ref{treesect} we include a number of results that hold in more general situations,
for boundary actions and groups acting on trees, mostly inspired from the work of Le~Boudec and Matte~Bon \cite{boudecmb}.
Especially, we investigate the case of when a group acts on a tree via an action that is minimal and of general type (cf.~\cite[Section~4.3]{boudecmb}).
These are fairly weak assumptions, and together they give rise to a boundary action of the group on a natural boundary of the tree.
With this set-up, Proposition~\ref{equiv-tree-top-free} provides a connection to conditions implying $C^*$-simplicity earlier studied by de~la~Harpe,
and Theorem~\ref{fixator-amenable} gives necessary and sufficient conditions for $C^*$-simplicity. %Example~\ref{regular-tree}
%Although $C^*$-simplicity already has been characterized in terms of actions on its Furstenberg boundary,
%the advantage of our results is that it provides a description in a more concrete situation.

Specializing further, for a graph of groups one can study its fundamental group and its action on the Bass-Serre tree (see Section~\ref{graphs}).
Graphs of groups with only one edge are the most studied examples,
and their fundamental groups are of two types, either an amalgamated free product or an HNN extension.
The former case was investigated in \cite{ivanovomland} and the latter case is studied in detail in the remaining sections of this paper,
where we define two ``quasi-kernels'' of an HNN extension in order to determine $C^*$-simplicity.
Theorem~\ref{hnnpowers} gives combinatorial properties analog to Proposition~\ref{equiv-tree-top-free},
and Proposition~\ref{hnn-finite} shows that in certain cases, $C^*$-simplicity is equivalent to the group being icc.
A characterization of $C^*$-simplicity in terms of quasi-kernels is then given in Theorem~\ref{hnn-utp-cstar}.

Finally, in Section~\ref{example} we produce a concrete HNN extension construced from locally finite groups,
that is not $C^*$-simple, but has the unique trace property.
%by showing that it admits a faithful non-topologically free boundary action that has amenable stabilizers

\medskip

The authors would like to thank the referee for carefully reading the paper.

\section{Preliminaries on boundary actions and \texorpdfstring{$C^*$}{C*}-simplicity}\label{boundaries}

In this section, we first recall the theory of boundary actions and extreme boundary actions, and their relation to $C^*$-simplicity and the unique trace property.
We employ the terminology of \cite[Section~5]{ivanovomland} to formalize the results.

Let $\Gamma$ be a discrete group with identity element $1$.
Consider the Hilbert space $\ell^2(\Gamma)$ with the standard orthonormal basis $\{\delta_g\}_{g\in\Gamma}$,
and define the left regular representation $\lambda$ of $\Gamma$ on $\ell^2(\Gamma)$ by $\lambda(g)\delta_h=\delta_{gh}$.
The reduced group $C^*$-algebra of $\Gamma$, denoted by $C^*_r(\Gamma)$, is the $C^*$-subalgebra of $B(\ell^2(\Gamma))$ generated by $\lambda(\Gamma)$.
A group $\Gamma$ is called \emph{$C^*$-simple} if $C^*_r(\Gamma)$ is simple, that is, if it has no non-trivial proper two-sided closed ideals.

A state on a unital $C^*$-algebra $B$ is a linear functional $\phi\colon B\to\C$ that is positive,
i.e., $\phi(a)\geq 0$ whenever $a\in B$ and $a\geq 0$, and unital, i.e., $\phi(1)=1$.
A state $\phi$ is called tracial if it satisfies the additional property that $\phi(ab)=\phi(ba)$ for all $a,b\in B$.
There is a canonical faithful tracial state $\tau$ on $C^*_r(\Gamma)$,
namely the vector state associated with $\delta_1$,
that is, $\tau(a)=\langle a\delta_1,\delta_1\rangle$ for all $a\in C^*_r(\Gamma)$.
The group $\Gamma$ is said to have the \emph{unique trace property} if $\tau$ is the only tracial state on $C^*_r(\Gamma)$. 

The first large class of groups that were shown to be $C^*$-simple with the unique trace property,
consisted of the \emph{Powers groups} (see \cite{delaharpe1985} for a definition).

Further, recall that a group $\Gamma$ is \emph{amenable} if there exists a state on $\ell^\infty(\Gamma)$
which is invariant under the left translation action by $\Gamma$.
It is explained in \cite{day1957} that every group $\Gamma$ has a unique maximal normal amenable subgroup $R(\Gamma)$,
called the \emph{amenable radical} of $\Gamma$.
It was shown in \cite[Theorem~1.3]{bkko} that $\Gamma$ has the unique trace property if and only if the amenable radical of $\Gamma$ is trivial.
As a consequence, $C^*$-simplicity is stronger than the unique trace property by \cite{paschkesalinas}, and strictly stronger by \cite[Theorem~A]{leboudec}.

The following study of $C^*$-simplicity is based on the dynamical approach to and characterization of $C^*$-simplicity given in \cite{kalantarkennedy}.
Let $\Gamma$ be a group acting continuously (i.e., by homeomorphisms) on a topological space $X$.
The fixed-point set of an element $g\in \Gamma$ is denoted by $X^g=\{x\in X\mid gx=x\}$.
Let $\Gamma_x=\{g\in \Gamma\mid gx=x\}$ be the stabilizer subgroup of $x\in X$ and let $\Gamma_x^\circ$ denote the subgroup of $\Gamma_x$ consisting of all elements that fix a neighborhood of $x$ pointwise.
We then define $\ker(\Gamma\curvearrowright X)=\{g\in \Gamma\mid X^g=X\}$ and
\[
\operatorname{int}(\Gamma\curvearrowright X)=\langle \Gamma_x^\circ \mid  x\in X\rangle=\langle\{g\in \Gamma\mid X^g\text{ has non-empty interior}\}\rangle.
\]
If $\ker(\Gamma\curvearrowright X)=\{1\}$, the action is \emph{faithful},
and when $\operatorname{int}(\Gamma\curvearrowright X)=\{1\}$, the action is \emph{topologically free}.
If $X$ is a Hausdorff space, then $X^g$ is always closed.

\begin{remark}\label{closuresdontmatter}
Let $\Gamma$ act on a Hausdorff space $X$.
If $D$ is a $\Gamma$-invariant subset of $X$, then so is $\ov{D}$, and we have
\[
\ker(\Gamma\curvearrowright D)=\ker(G\curvearrowright\ov{D}),\quad\operatorname{int}(\Gamma\curvearrowright D)=\operatorname{int}(\Gamma\curvearrowright\ov{D}).
\]
Here the first equality follows from continuity of the action.
If $D^g$ has non-empty interior for some $g\in\Gamma$, there is a non-empty open subset $V$ of $\ov{D}$ such that $V\cap D\subseteq D^g$ which ensures that $V\subseteq\ov{V\cap D}\subseteq\ov{D^g}\subseteq\ov{D}^g$.
Conversely, if $\ov{D}^g$ has non-empty interior, then there is a non-empty open subset $V$ of $\ov{D}$ such that $V\subseteq\ov{D}^g$, meaning that $V\cap D\subseteq D^g$. As $V$ is non-empty, so is $V\cap D$.
\end{remark}

The action of $\Gamma$ on the topological space $X$ is said to be \emph{minimal} if the $\Gamma$-orbit $\Gamma x$ is dense in $X$ for all $x\in X$. Now assume that $X$ is compact Hausdorff,
so that $X$ always admits a minimal and closed $\Gamma$-invariant subset by Zorn's lemma.
By the Riesz representation theorem we may identify the space $\mathcal P(X)$ of Radon probability measures on $X$ with the state space of the unital $C^*$-algebra $C(X)$.
We then say that the action of $\Gamma$ on $X$ is
%\emph{proximal} if for any $x,y\in X$ the closure of the $\Gamma$-orbit of $(x,y)\in X\times X$ intersects the diagonal, and 
\emph{strongly proximal} if the weak$^*$-closure of every $\Gamma$-orbit in $\mathcal P(X)$ contains a Dirac measure.

The action is a \emph{boundary action} (and $X$ is said to be a \emph{$\Gamma$-boundary}) if the action of $\Gamma$ on $X$ is minimal and strongly proximal.
Any $\Gamma$-boundary has an isolated point if and only if it is a one-point space. 
The major result \cite[Theorem~6.2]{kalantarkennedy} states (among other things) that
%for any $\Gamma$-boundary $X$, $\Gamma$ is $C^*$-simple if and only if $C(X)\rtimes_r \Gamma$ is simple, and moreover, that
if $\Gamma\curvearrowright X$ is a topologically free boundary action, then $\Gamma$ is $C^*$-simple.

%\begin{rem}\label{min-prox}
%If a group $\Gamma$ admits a minimal, proximal action on a compact Hausdorff space $X$, then $X$ has an isolated point if and only if $X$ is a one-point space.
%Indeed, if $x\in X$ is isolated and $y\in X\setminus\{x\}$, then there exists a net $(g_i)$ in $\Gamma$ such that $g_ix\to x$ and $g_iy\to x$, but this implies $g_ix=g_iy=x$ for some $i$.
%\end{rem}

A result due to Furstenberg \cite{furstenberg} states that any discrete group $\Gamma$ admits a universal $\Gamma$-boundary $\partial_F\Gamma$, known as the \emph{Furstenberg boundary},
in the sense that for any $\Gamma$-boundary $X$ there exists a unique $\Gamma$-equivariant continuous surjection $\partial_F\Gamma\to X$.
Then \cite[Theorem~6.2]{kalantarkennedy} says that $\Gamma$ is $C^*$-simple if and only if the action of $\Gamma$ on the Furstenberg boundary $\partial_F\Gamma$ is topologically free,
i.e., if and only if $\operatorname{int}(\Gamma\curvearrowright\partial_F\Gamma)$ is trivial.
Moreover, in \cite[Corollary~8]{furman}, it is proved that $R(\Gamma)=\ker(\Gamma\curvearrowright\partial_F\Gamma)$.
Thus, minimality of the $\Gamma$-action on $\partial_F\Gamma$ implies that $\Gamma$ is amenable if and only if $\partial_F\Gamma$ is a one-point space.

Another, intrinsic, description of $C^*$-simplicity is obtained in \cite{kennedy2015char}.
A subgroup $H$ of a group $\Gamma$ is called \emph{recurrent} if for any net $(g_i)$ in $\Gamma$ there is a subnet $(g_j)$ such that $\bigcap_j g_jHg_j^{-1}\neq\{e\}$.
Then \cite[Theorem~5.3]{kennedy2015char} says that $\Gamma$ is $C^*$-simple if it has no amenable recurrent subgroups.

In \cite[Section~7]{ivanovomland} the \emph{amenablish radical} $AH(\Gamma)$ of a group $\Gamma$ is defined.
It is the smallest normal subgroup of $\Gamma$ producing a $C^*$-simple quotient.
In particular, a group is $C^*$-simple if and only if $AH(\Gamma)=\{1\}$ and is called \emph{amenablish} if $AH(\Gamma)=\Gamma$.
For any group $\Gamma$, the normal subgroup $\operatorname{int}(\Gamma\curvearrowright\partial_F \Gamma)$ is always amenablish.
In \cite[Section~6 and~7]{ivanovomland}, it is explained that the class of amenablish groups is radical and is ``dual'' to the class of $C^*$-simple groups, which is residual,
analogously to the duality between the radical class of amenable groups and the residual class of groups with the unique trace property.

The following result is an adaptation of \cite[Theorem~6.2]{kalantarkennedy} and \cite[Theorem~5.2]{Haagerup2016}.

\begin{proposition}\label{interior-kernel}
Let $X$ be a $\Gamma$-boundary.
The following hold:
\begin{itemize}
\item[(i)] $\Gamma$ has the unique trace property if and only if $\ker(\Gamma\curvearrowright X)$ has the unique trace property.
\item[(ii)] $\Gamma$ is $C^*$-simple if and only if $\operatorname{int}(\Gamma\curvearrowright X)$ is $C^*$-simple.
\end{itemize}
\end{proposition}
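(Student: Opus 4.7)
The plan is to reduce both biconditionals to Furman's identification $R(\Gamma)=\ker(\Gamma\curvearrowright\partial_F\Gamma)$, the universal property of the Furstenberg boundary, and the corresponding ``normal subgroup'' theorems of Breuillard, Kalantar, Kennedy, and Ozawa. First I would record three preliminary observations: both $N:=\ker(\Gamma\curvearrowright X)$ and $K:=\operatorname{int}(\Gamma\curvearrowright X)$ are normal in $\Gamma$ (for $K$, one uses $X^{hgh^{-1}}=hX^g$, so conjugation preserves the generating set in the definition of $K$); we have $N\subseteq K$ since $X=X^g$ has non-empty interior; and the universal property provides a $\Gamma$-equivariant continuous surjection $\pi\colon\partial_F\Gamma\to X$. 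Combining $R(\Gamma)=\ker(\Gamma\curvearrowright\partial_F\Gamma)$ with equivariance of $\pi$, any element fixing $\partial_F\Gamma$ pointwise also fixes $X$ pointwise, giving $R(\Gamma)\subseteq N$.

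For (i), the strategy is to upgrade this inclusion to the equality $R(\Gamma)=R(N)$, after which \cite[Theorem~1.3]{bkko} immediately yields the biconditional. One inclusion comes from observing that $R(\Gamma)$ is amenable and, being contained in $N$, also normal in $N$, so $R(\Gamma)\subseteq R(N)$. For the reverse inclusion, $R(N)$ is characteristic in $N$ and hence normal in $\Gamma$, while remaining amenable, whence $R(N)\subseteq R(\Gamma)$.

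For (ii), the forward implication follows from the theorem of \cite{bkko} that every non-trivial normal subgroup of a $C^*$-simple group is itself $C^*$-simple, together with the trivial case $K=\{1\}$ (where $C^*_r(K)=\C$ is simple). The reverse implication is the main obstacle, since in general $C^*$-simplicity of a normal subgroup does not propagate upward. My plan is to adapt the argument behind \cite[Theorem~5.2]{Haagerup2016} to the present boundary $X$: exploiting minimality and strong proximality of $\Gamma\curvearrowright X$, one produces convex averages of conjugates and shows that any non-zero closed two-sided ideal of $C^*_r(\Gamma)$ would intersect $C^*_r(K)$ non-trivially, contradicting $C^*$-simplicity of $K$. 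An alternative route is to pass through the intermediate assertion that $C(X)\rtimes_r\Gamma$ is simple precisely when $K$ is $C^*$-simple, and then invoke the Kalantar--Kennedy theorem in the form ``$\Gamma$ is $C^*$-simple if and only if $C(Y)\rtimes_r\Gamma$ is simple for some $\Gamma$-boundary $Y$''.
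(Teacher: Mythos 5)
Part (i) of your argument is correct and is essentially the paper's proof: the paper also starts from Furman's identity $R(\Gamma)=\ker(\Gamma\curvearrowright\partial_F\Gamma)$ to get $R(\Gamma)\subseteq N$, deduces $R(\Gamma)=R(N)$ (citing \cite[Lemma~6.7]{ivanovomland} where you prove the two inclusions by hand, which is fine), and concludes via \cite[Theorem~1.3]{bkko}. The forward implication of (ii) also matches the paper's use of \cite[Theorem~1.4]{bkko}.

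The reverse implication of (ii) --- that $C^*$-simplicity of $K=\operatorname{int}(\Gamma\curvearrowright X)$ forces $C^*$-simplicity of $\Gamma$ --- is where your proposal has a genuine gap: neither of your two routes is actually carried out, and this is precisely the hard step. In the first route, the entire content is the claim that a suitable average of conjugates of a positive element of a non-zero ideal $I\trianglelefteq C^*_r(\Gamma)$ yields a non-zero element of $I\cap C^*_r(K)$; this relative Powers-type averaging is nontrivial (the set $\{g : X^g \text{ has non-empty interior}\}$ is not a subgroup, the limit must be shown non-zero, and one cannot invoke the averaging property of $\Gamma$ without circularity), and you do not supply it. (Also, a non-trivial intersection $I\cap C^*_r(K)$ would not \emph{contradict} simplicity of $C^*_r(K)$; one would instead use simplicity to conclude $1\in I$.) The second route is circular: by \cite[Theorem~6.2]{kalantarkennedy}, simplicity of $C(X)\rtimes_r\Gamma$ is equivalent to $C^*$-simplicity of $\Gamma$, so your ``intermediate assertion'' is equivalent to the proposition itself and is left unproven. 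The paper's actual argument avoids averaging entirely: by the universal property of $\partial_F\Gamma$ and \cite[Lemma~3.2]{bkko}, one has $\operatorname{int}(\Gamma\curvearrowright\partial_F\Gamma)\subseteq\operatorname{int}(\Gamma\curvearrowright Y)$ for \emph{every} $\Gamma$-boundary $Y$; taking $Y=X$ gives $\operatorname{int}(\Gamma\curvearrowright\partial_F\Gamma)\subseteq K$, and --- the key input you are missing --- since $K$ is normal, \cite[Lemma~5.2]{bkko} lets the boundary action $K\curvearrowright\partial_F K$ extend to a $\Gamma$-boundary action on $\partial_F K$, so taking $Y=\partial_F K$ yields $\operatorname{int}(\Gamma\curvearrowright\partial_F\Gamma)\subseteq\operatorname{int}(\Gamma\curvearrowright\partial_F K)\cap K=\operatorname{int}(K\curvearrowright\partial_F K)$. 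If $K$ is $C^*$-simple the right-hand side is trivial by \cite{kalantarkennedy}, hence so is $\operatorname{int}(\Gamma\curvearrowright\partial_F\Gamma)$, and $\Gamma$ is $C^*$-simple. You should either reproduce this boundary-theoretic chain of inclusions or genuinely execute the averaging argument; as written, the step is missing.
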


\begin{proof}
Every $g\in R(\Gamma)$ fixes every point in $X$ by \cite[Corollary~8]{furman}.
Hence $R(\Gamma)\subseteq\ker(\Gamma\curvearrowright X)$, so by e.g., \cite[Lemma~6.7]{ivanovomland}, we have that $R(\Gamma)=R(\ker(\Gamma\curvearrowright X))$.
Consequently, (i) holds. 

For (ii), we set $N=\operatorname{int}(\Gamma\curvearrowright X)$.
For any $\Gamma$-boundary $Y$, there exists a $\Gamma$-equivariant continuous surjective map $\partial_F \Gamma\to Y$.
If $g\in\Gamma$ is such that $(\partial_F \Gamma)^g$ has non-empty interior, then the set $Y^g$ has non-empty interior by \cite[Lemma~3.2]{bkko}.
Thus, $\operatorname{int}(\Gamma\curvearrowright\partial_F \Gamma)\subseteq\operatorname{int}(\Gamma\curvearrowright Y)$.
Since $N$ is a normal subgroup of $\Gamma$, the boundary action $N\curvearrowright\partial_F N$ extends to a boundary action $\Gamma\curvearrowright\partial_F N$ by \cite[Lemma~5.2]{bkko}.
It follows that $\operatorname{int}(\Gamma\curvearrowright\partial_F \Gamma)\subseteq \operatorname{int}(\Gamma\curvearrowright\partial_F N)\cap N=\operatorname{int}(N\curvearrowright\partial_F N)$.
Hence, if $N$ is $C^*$-simple, then $\Gamma$ is $C^*$-simple.

The converse holds by \cite[Theorem~1.4]{bkko} because $\operatorname{int}(\Gamma\curvearrowright X)$ is a normal subgroup of $\Gamma$.
\end{proof}

For $\Gamma$ and $X$ as above and a non-empty subset $U\subseteq X$, define the fixator subgroup $\Gamma_U=\{ g\in\Gamma \mid gx=x \text{ for all }x\in U \}$.
Since group elements act continuously, it should be clear that $\Gamma_U=\Gamma_{\ov U}$.
In particular, if $V$ is dense in $X$, then $\ker(\Gamma\curvearrowright X) = \Gamma_V\subseteq \Gamma_U$ for all non-empty $U\subseteq X$.

Remark that the notation $\Gamma_C$ for rigid stabilizers used in \cite{boudecmb} coincides with our $\Gamma_U$ for $C=X\setminus U$.

%\begin{remark}\label{rem:quotient-kernel}
%Let $\Gamma$ act on a compact Hausdorff space $X$, and set $N=\ker(\Gamma\curvearrowright X)$ and $I=\operatorname{int}(\Gamma\curvearrowright X)$.
%Then the action of $\Gamma$ on $X$ factors through an action of $\Gamma/N$ on $X$, and $I/N$ is isomorphic to the interior of the action $G/N\curvearrowright X$.
%Moreover, for any non-empty subset $U\subseteq X$, the fixator subgroup $\Gamma_U$ in $\Gamma$ satisfies $\Gamma_U/N\cong(\Gamma/N)_U$.
%Indeed, the quotient map $\Gamma\to\Gamma/N$ maps $I$ surjectively onto $\operatorname{int}(\Gamma/N\curvearrowright X)$
%and $\Gamma_U$ surjectively onto $(\Gamma/N)_U$ with kernel $N\subseteq I\cap\Gamma_U$.
%\end{remark}

Recall that the \emph{normal closure} in a group $\Gamma$ of a subset $S\subseteq\Gamma$ is the smallest normal subgroup, denoted here by $\llangle S\rrangle$, of $\Gamma$ containing $S$.

\begin{lemma}\label{interior stabilizers}
For any minimal action of a group $\Gamma$ on a Hausdorff space $X$ and any $x\in X$, the interior of $\Gamma\curvearrowright X$ equals the normal closure of $\Gamma_x^\circ$.
\end{lemma}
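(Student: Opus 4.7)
The plan is to prove both inclusions between $\operatorname{int}(\Gamma\curvearrowright X)$ and $\llangle \Gamma_x^\circ \rrangle$. The key observation that makes everything run smoothly is that for $g\in\Gamma$ and $y\in X$, a group element $h$ fixes an open neighborhood $V$ of $y$ pointwise if and only if $ghg^{-1}$ fixes the open neighborhood $gV$ of $gy$ pointwise; therefore $g\Gamma_y^\circ g^{-1}=\Gamma_{gy}^\circ$ for all $g\in\Gamma$ and $y\in X$.

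For the inclusion $\llangle \Gamma_x^\circ \rrangle \subseteq \operatorname{int}(\Gamma\curvearrowright X)$, I would first note that $\Gamma_x^\circ$ is by definition contained in $\operatorname{int}(\Gamma\curvearrowright X)$, so it suffices to show that $\operatorname{int}(\Gamma\curvearrowright X)$ is normal. But this is immediate from the conjugation formula above: conjugating the generating set $\{\Gamma_y^\circ : y\in X\}$ by any $g$ merely permutes it via $y\mapsto gy$.

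The substantive direction is $\operatorname{int}(\Gamma\curvearrowright X)\subseteq \llangle \Gamma_x^\circ \rrangle$. It suffices to show that $\Gamma_y^\circ \subseteq \llangle \Gamma_x^\circ \rrangle$ for every $y\in X$. So let $y\in X$ and $h\in \Gamma_y^\circ$; by definition $h$ fixes some open neighborhood $V$ of $y$ pointwise. Here I would invoke minimality of the action: the orbit $\Gamma x$ is dense in $X$, so $V\cap \Gamma x\neq\oo$, and we may pick $g\in\Gamma$ with $gx\in V$. Since $V$ is then also an open neighborhood of $gx$ that $h$ fixes pointwise, we get $h\in \Gamma_{gx}^\circ=g\Gamma_x^\circ g^{-1}\subseteq \llangle \Gamma_x^\circ \rrangle$, as required.

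There is no real obstacle here; the only subtle point is making sure that minimality is used exactly where needed (to transport any $y\in X$ to a point in an arbitrary open set via the orbit of $x$), and that the neighborhood appearing in the definition of $\Gamma_y^\circ$ may be taken to be open so it serves as a neighborhood of any of its points.
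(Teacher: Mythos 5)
Your proof is correct and follows essentially the same route as the paper: the nontrivial inclusion is obtained by using minimality to move $x$ into the fixed open neighborhood of $y$ and then applying the conjugation identity $g\Gamma_x^\circ g^{-1}=\Gamma_{gx}^\circ$. The extra remarks on normality of $\operatorname{int}(\Gamma\curvearrowright X)$ and on taking the neighborhood to be open are fine and only make explicit what the paper leaves implicit.
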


\begin{proof}
The normal closure of $\Gamma_x^\circ$ is always contained in $\operatorname{int}(\Gamma\curvearrowright X)$.
Now let $y\in X$ and let $g\in\Gamma_y^\circ$.
Then there exists a neighbourhood $U$ of $y$ such that $g$ fixes $U$ pointwise.
Since the action is minimal, we can find an $h\in\Gamma$ such that $hx\in U$.
Then $h^{-1}gh$ fixes $h^{-1}U$ pointwise, so that $h^{-1}gh\in\Gamma_x^\circ$, i.e., $g\in h\Gamma_x^\circ h^{-1}$.
Hence $g$ is contained in the normal closure of $\Gamma_x^\circ$, completing the proof.
\end{proof}

\begin{lemma}\label{lem:top-free}
Let $\Gamma\curvearrowright X$ be a minimal action on a Hausdorff space $X$.
The following are equivalent:
\begin{itemize}
\item[(i)] $\Gamma\curvearrowright X$ is topologically free;
\item[(ii)] $\Gamma_x^\circ$ is trivial for some $x\in X$;
\item[(iii)] $\Gamma_U$ is trivial for all non-empty open $U\subseteq X$.
\end{itemize}
\end{lemma}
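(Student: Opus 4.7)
The plan is to establish the cycle \textup{(i)} $\Rightarrow$ \textup{(iii)} $\Rightarrow$ \textup{(ii)} $\Rightarrow$ \textup{(i)}, using Lemma \ref{interior stabilizers} as the one nontrivial input.

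First, for \textup{(i)} $\Rightarrow$ \textup{(iii)}: if $g \in \Gamma_U$ for some non-empty open $U \subseteq X$, then $U \subseteq X^g$, so $X^g$ has non-empty interior and therefore $g$ lies in $\operatorname{int}(\Gamma\curvearrowright X)$, which is trivial by assumption. Hence $g = 1$, i.e., $\Gamma_U = \{1\}$.

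Next, \textup{(iii)} $\Rightarrow$ \textup{(ii)} is essentially a matter of unwinding the definition: pick any $x \in X$ and observe that $\Gamma_x^\circ = \bigcup \Gamma_U$, where the union runs over all open neighborhoods $U$ of $x$. Each such $\Gamma_U$ is trivial by \textup{(iii)}, so $\Gamma_x^\circ$ is trivial.

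Finally, for \textup{(ii)} $\Rightarrow$ \textup{(i)}, I would quote Lemma \ref{interior stabilizers}, which (using minimality) identifies $\operatorname{int}(\Gamma\curvearrowright X)$ with the normal closure of $\Gamma_x^\circ$ for any chosen $x \in X$; if that subgroup is trivial for some $x$, so is its normal closure, yielding \textup{(i)}.

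The only real content is Lemma \ref{interior stabilizers}, and that has already been proved; without it, the step \textup{(ii)} $\Rightarrow$ \textup{(i)} would be the main obstacle, requiring one to argue that an arbitrary element fixing a neighborhood of some $y \in X$ is conjugate, via minimality, to one fixing a neighborhood of $x$. Since that argument is exactly the content of the earlier lemma, here it simply gets cited.
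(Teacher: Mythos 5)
Your proof is correct and relies on exactly the same key input as the paper, namely Lemma~\ref{interior stabilizers} (which is where minimality enters); the only difference is that you run the cycle in the opposite order, (i)~$\Rightarrow$~(iii)~$\Rightarrow$~(ii)~$\Rightarrow$~(i), and invoke the lemma in the step (ii)~$\Rightarrow$~(i) rather than in (ii)~$\Rightarrow$~(iii). Each individual implication is sound, so nothing further is needed.
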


\begin{proof}
The implication (i)~$\Longrightarrow$~(ii) is obvious.

If $\Gamma_x^\circ$ is trivial for some $x\in X$, then $\Gamma_x^\circ$ is trivial for all $x\in X$ by Lemma~\ref{interior stabilizers}.
Let $U$ be an arbitrary non-empty open set in $X$ and pick any $x\in U$.
Clearly, $\Gamma_U$ is a subgroup of $\Gamma_x^\circ$, which is trivial.
Thus $\Gamma_U$ is trivial, and we have shown (ii)~$\Longrightarrow$~(iii).

Finally, to prove (iii)~$\Longrightarrow$~(i), assume that $\Gamma\curvearrowright X$ is not topologically free, and choose a non-trivial $g\in\Gamma$ such that $X^g$ has non-empty interior $U$.
Then $g\in \Gamma_U$, which is therefore non-trivial.
\end{proof}

We now say that an action $\Gamma\curvearrowright X$ is \emph{amenably free} if $\Gamma_x^\circ$ is amenable for all $x\in X$.

\begin{lemma}\label{lem:am-free}
Let $\Gamma\curvearrowright X$ be a minimal action on a Hausdorff space $X$.
The following are equivalent:
\begin{itemize}
\item[(i)] $\Gamma\curvearrowright X$ is amenably free;
\item[(ii)] $\Gamma_x^\circ$ is amenable for some $x\in X$;
\item[(iii)] $\Gamma_U$ is amenable for all non-empty open $U\subseteq X$.
\end{itemize}
\end{lemma}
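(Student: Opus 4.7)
The plan is to follow the same three-step cycle (i)$\Rightarrow$(ii)$\Rightarrow$(iii)$\Rightarrow$(i) used in Lemma~\ref{lem:top-free}, with amenability replacing triviality where appropriate. The implication (i)$\Rightarrow$(ii) is immediate from the definition of amenable freeness.

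For (ii)$\Rightarrow$(iii), suppose $\Gamma_x^\circ$ is amenable for some $x\in X$, and let $U\subseteq X$ be a non-empty open set. By minimality, there exists $h\in\Gamma$ with $hx\in U$, so $x\in h^{-1}U$, and $h^{-1}U$ is an open neighborhood of $x$. A direct verification gives $\Gamma_{h^{-1}U}=h^{-1}\Gamma_U h$, and $\Gamma_{h^{-1}U}\subseteq\Gamma_x^\circ$ because every element of $\Gamma_{h^{-1}U}$ fixes $h^{-1}U$, which is a neighborhood of $x$. Hence $\Gamma_U=h\Gamma_{h^{-1}U}h^{-1}\subseteq h\Gamma_x^\circ h^{-1}$ is a subgroup of a conjugate of an amenable group, and is therefore amenable.

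For (iii)$\Rightarrow$(i), fix an arbitrary $x\in X$ and observe that $\Gamma_x^\circ=\bigcup_{U}\Gamma_U$, where $U$ ranges over open neighborhoods of $x$: any $g\in\Gamma_x^\circ$ fixes some such $U$ pointwise, and conversely $\Gamma_U\subseteq\Gamma_x^\circ$ for every such $U$. This union is directed under inclusion, since $\Gamma_{U_1}\cup\Gamma_{U_2}\subseteq\Gamma_{U_1\cap U_2}$. Because amenability is preserved under directed unions of subgroups, and each $\Gamma_U$ is amenable by (iii), it follows that $\Gamma_x^\circ$ is amenable.

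The only conceptual point that differs from Lemma~\ref{lem:top-free} is that one cannot shortcut (ii)$\Rightarrow$(i) via Lemma~\ref{interior stabilizers}, since the normal closure of an amenable subgroup is in general not amenable; conjugates of $\Gamma_x^\circ$ need not share a common amenable enlargement. The conjugation argument above bypasses this issue entirely by working with each fixator subgroup $\Gamma_U$ separately, so no global control on $\operatorname{int}(\Gamma\curvearrowright X)$ is required. I expect this to be the only subtlety; otherwise the proof is a straightforward amenable transcription of the topologically free case.
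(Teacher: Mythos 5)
Your proof is correct and follows essentially the same route as the paper: (i)$\Rightarrow$(ii) is immediate, (ii)$\Rightarrow$(iii) uses minimality to conjugate $\Gamma_U$ into $\Gamma_x^\circ$ (the paper writes this as $\Gamma_U\subseteq\Gamma_{hx}^\circ=h\Gamma_x^\circ h^{-1}$, which is the same computation as yours), and (iii)$\Rightarrow$(i) realizes $\Gamma_x^\circ$ as the directed union of the $\Gamma_U$ over open neighbourhoods of $x$. Your closing remark about why the normal-closure shortcut of the topologically free case fails here is accurate but not needed for the argument.
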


\begin{proof}
The implication (i)~$\Longrightarrow$~(ii) is obvious.

Let $U\subseteq X$ be non-empty and open.
By minimality, there exists $h\in\Gamma$ such that $hx\in U$, and thus $\Gamma_U\subseteq\Gamma_{hx}^\circ=h\Gamma^{\circ}_xh^{-1}$.
Hence, it follows that (ii)~$\Longrightarrow$~(iii).

Finally, to show (iii)~$\Longrightarrow$~(i), we assume that $\Gamma_U$ is amenable for all non-empty open $U\subseteq X$.
Choose an arbitrary $x\in X$ and let $\mathcal V$ be the collection of all open neighbourhoods of $x$.
The collection is clearly closed under finite intersections, so it is downward directed, and therefore $\Gamma_x^\circ$ equals the direct limit $\bigcup_{\mathcal V}\Gamma_U$,
which is amenable.
\end{proof}

\begin{remark}\label{rem:amenably}
(i)
If $\Gamma\curvearrowright X$ is an amenable action on a compact Hausdorff space, then it is amenably free.
Indeed, it follows from the definition of amenability of $\Gamma\curvearrowright X$ that for all $x\in X$,
the action $\Gamma_x\curvearrowright\{x\}$ is amenable,
and therefore $\Gamma_x$ is amenable (see e.g.\ \cite[Sections~4.3-4.4]{brown-ozawa}).
Hence, $\Gamma_x^\circ$ is amenable for all $x\in X$.

(ii)
Let $\Gamma\curvearrowright X$ be a minimal action of a \emph{countable} group $\Gamma$ on a Hausdorff space $X$.
Then there exists $x\in X$ such that $\Gamma_x^\circ=\Gamma_x$ (see \cite[Proposition~2.4]{boudecmb}).
Therefore, in this case, $\Gamma\curvearrowright X$ is topologically free if and only if $\Gamma_x$ is trivial for some $x\in X$,
and amenably free if and only if $\Gamma_x$ is amenable for some $x\in X$.
\end{remark}

\begin{proposition}\label{prop:am-free}
Let $\Gamma\curvearrowright X$ be a minimal action on a compact Hausdorff space $X$.
If $\Gamma$ is $C^*$-simple, then the action is either topologically free or not amenably free.
\end{proposition}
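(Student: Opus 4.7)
The plan is to prove the contrapositive: assume $\Gamma\curvearrowright X$ is both amenably free and not topologically free, then produce a non-trivial amenable recurrent subgroup of $\Gamma$ and conclude that $\Gamma$ fails $C^*$-simplicity by Kennedy's theorem \cite[Theorem~5.3]{kennedy2015char}.

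By Lemma~\ref{lem:top-free}(ii), fix some $x \in X$ with $\Gamma_x^\circ \neq \{1\}$; Lemma~\ref{lem:am-free}(ii) gives that $\Gamma_x^\circ$ is amenable. The target is to verify that $\Gamma_x^\circ$ is a recurrent subgroup. Given an arbitrary net $(g_i)$ in $\Gamma$, use compactness of $X$ to extract a subnet with $g_i x \to y$ for some $y \in X$. The first crucial claim is that $\Gamma_y^\circ$ is also non-trivial: choose any $h \in \Gamma_x^\circ \setminus \{1\}$ together with an open neighborhood $W$ of $x$ that $h$ fixes pointwise; the $\Gamma$-invariant open set $\bigcup_{g \in \Gamma} gW$ is non-empty, hence equals $X$ by minimality, so $y \in g_0 W$ for some $g_0 \in \Gamma$, and then $g_0 h g_0^{-1}$ fixes the open neighborhood $g_0 W$ of $y$ pointwise, giving a non-identity element of $\Gamma_y^\circ$.

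Having ensured $\Gamma_y^\circ \neq \{1\}$, pick a non-identity $k \in \Gamma_y^\circ$ and an open neighborhood $V$ of $y$ that $k$ pointwise fixes. Since $g_j x \to y$, we have $g_j x \in V$ eventually; for such $j$, $V$ is an open neighborhood of $g_j x$ fixed pointwise by $k$, so $k \in \Gamma_{g_j x}^\circ = g_j \Gamma_x^\circ g_j^{-1}$. Passing to the tail of this subnet exhibits $k$ as a non-trivial element of $\bigcap_j g_j \Gamma_x^\circ g_j^{-1}$, establishing recurrence of $\Gamma_x^\circ$. The cited theorem then denies $C^*$-simplicity of $\Gamma$, producing the desired contradiction.

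The main obstacle is the non-triviality claim for $\Gamma_y^\circ$: a priori the interior-stabilizer could collapse at some limit points despite $\Gamma_x^\circ$ being non-trivial. The remedy is to leverage minimality to spread a single pointwise-fixed open set across all of $X$ via the $\Gamma$-action, ensuring that every point — in particular $y$ — inherits a conjugate of the non-trivial element $h \in \Gamma_x^\circ$ in its own interior stabilizer.
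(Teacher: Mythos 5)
Your argument is correct, but it is genuinely different from the paper's, which disposes of this proposition in one line by citing \cite[Theorem~7.1]{bkko} and \cite[Theorem~14~(2)]{ozawahandout}. You instead give a self-contained reduction to Kennedy's recurrent-subgroup criterion: assuming the action is amenably free but not topologically free, you show that $\Gamma_x^\circ$ is a non-trivial amenable recurrent subgroup, by extracting a subnet with $g_jx\to y$, noting $\Gamma_y^\circ\neq\{1\}$, and observing that any $k\in\Gamma_y^\circ$ fixing a neighbourhood $V$ of $y$ eventually lies in $\Gamma_{g_jx}^\circ=g_j\Gamma_x^\circ g_j^{-1}$. All the steps check out: your minimality argument for $\Gamma_y^\circ\neq\{1\}$ is exactly the content of Lemmas~\ref{interior stabilizers} and~\ref{lem:top-free}, which you could simply have cited, and a tail of a convergent subnet is again a subnet, so the recurrence condition is met as stated. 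Two small remarks. First, your appeal to \cite[Theorem~5.3]{kennedy2015char} uses the direction ``non-trivial amenable recurrent subgroup $\Rightarrow$ not $C^*$-simple,'' whereas the paper quotes that theorem only in the opposite direction; the theorem is in fact an equivalence (and the paper itself invokes the direction you need in the corollary following Lemma~\ref{recurrence}), but you should make explicit that you are using the full biconditional. Second, your proof actually yields slightly more than the proposition: for any minimal action on a compact Hausdorff space that is not topologically free, \emph{every} $\Gamma_x^\circ$ is recurrent, independently of amenability; amenable freeness is only used at the very end to feed this into Kennedy's criterion. What the paper's citation-based route buys is brevity; what your route buys is a transparent proof from first principles that stays entirely within the toolkit (recurrence, interior stabilizers, minimality) already set up in Section~2.
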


\begin{proof}
This follows from \cite[Theorem~7.1]{bkko} and \cite[Theorem~14~(2)]{ozawahandout}.
\end{proof}

\begin{corollary}
If $\Gamma\curvearrowright X$ is an amenably free boundary action,
then $\Gamma$ is $C^*$-simple if and only if $\Gamma\curvearrowright X$ is topologically free.
\end{corollary}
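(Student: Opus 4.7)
The plan is essentially to assemble two results already on the table: the Kalantar--Kennedy sufficient condition for $C^*$-simplicity of topologically free boundary actions, and the dichotomy in Proposition~\ref{prop:am-free}.

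For the reverse implication, I would simply invoke \cite[Theorem~6.2]{kalantarkennedy} (stated in the preliminaries as: a topologically free boundary action of $\Gamma$ forces $\Gamma$ to be $C^*$-simple). No use is made of the amenably free hypothesis here; this direction works for any boundary action.

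For the forward implication, assume $\Gamma$ is $C^*$-simple. Since $\Gamma \curvearrowright X$ is a boundary action, it is in particular a minimal action on a compact Hausdorff space, so Proposition~\ref{prop:am-free} applies and yields the dichotomy that the action is either topologically free or not amenably free. Since we are assuming $\Gamma \curvearrowright X$ is amenably free by hypothesis, the second alternative is ruled out, and we conclude that the action is topologically free.

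Since both directions are immediate consequences of results already established in the excerpt, there is essentially no obstacle: the whole content of the corollary is the observation that amenable freeness of the boundary action is exactly the hypothesis needed to make Proposition~\ref{prop:am-free} bite in the direction opposite to the Kalantar--Kennedy theorem, thereby turning an implication into an equivalence. The only thing to watch is that a $\Gamma$-boundary is by definition a minimal action on a compact Hausdorff space, so Proposition~\ref{prop:am-free} is applicable without any extra verification.
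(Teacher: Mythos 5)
Your argument is correct and is exactly the intended one: the paper states this corollary without proof precisely because it follows immediately by combining \cite[Theorem~6.2]{kalantarkennedy} for the backward direction with Proposition~\ref{prop:am-free} for the forward direction, as you do. Your observation that a $\Gamma$-boundary is by definition a minimal action on a compact Hausdorff space, so that Proposition~\ref{prop:am-free} applies directly, is the only point needing verification, and you have handled it.
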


\begin{proposition}\label{prop:bb}
Let $\Gamma\curvearrowright X$ be a minimal action on a compact Hausdorff space such that $N=\ker(\Gamma\curvearrowright X)$ is $C^*$-simple.
Suppose that $\Gamma_U/N$ is non-amenable for all non-dense open $U\subseteq X$.
Then $\Gamma$ is $C^*$-simple.
\end{proposition}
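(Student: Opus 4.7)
The plan is to apply the Kalantar--Kennedy criterion \cite[Theorem~6.2]{kalantarkennedy} to conclude $C^*$-simplicity of $\Gamma$ from the triviality of $M:=\operatorname{int}(\Gamma\curvearrowright\partial_F\Gamma)$. As recalled in the excerpt, $M$ is always a normal amenablish subgroup of $\Gamma$. By the radical property of the amenablish class \cite[Sections~6--7]{ivanovomland}, any normal amenablish subgroup of $N$ must lie in the amenablish radical $AH(N)$, which is trivial since $N$ is $C^*$-simple. Hence the proof reduces to establishing the inclusion $M\subseteq N$: once this is known, $M\triangleleft N$ is amenablish, forcing $M=\{1\}$.

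For the inclusion $M\subseteq N$, I would argue by contradiction. Suppose some $g\in M\setminus N$ moves a point $x\in X$. By Hausdorffness, choose disjoint open neighbourhoods $V_1\ni x$ and $V_2\ni gx$ with $\overline{V_1}\cap\overline{V_2}=\emptyset$, and set $U:=V_1\cap g^{-1}V_2$. Then $U$ is non-empty open, $gU\cap U=\emptyset$, and $U$ is non-dense because $V_2\subseteq X\setminus\overline U$. The standing hypothesis then gives that $\Gamma_U/N$ is non-amenable.

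To produce the contradiction, the strategy is to convert ``$g\in M$'' into a statement forcing $\Gamma_U/N$ to be amenable. Specifically, I would extend $N\curvearrowright\partial_F N$ to a $\Gamma$-boundary action via \cite[Lemma~5.2]{bkko}, obtaining a $\Gamma$-equivariant continuous surjection $\partial_F\Gamma\twoheadrightarrow\partial_F N$, and then form a minimal $\Gamma$-invariant closed subset $Z\subseteq\partial_F N\times X$ with the diagonal action. The projections $Z\to\partial_F N$ and $Z\to X$ are surjective by minimality, and on $Z$ the fixators of open sets encode both interior-fixation data from $\partial_F N$ (where $N$ acts topologically freely) and open-set-fixation data from $X$. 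The idea is that the amenability of point stabilizers in the Furstenberg boundary (a standard consequence of \cite{bkko}) transfers, via $Z$, to an amenable subgroup of $\Gamma_U/N$, contradicting non-amenability.

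The main obstacle is this last step: constructing the auxiliary boundary $Z$ and rigorously producing the amenable subgroup of $\Gamma_U/N$. Since $X$ need not itself be a $\Gamma$-boundary, there is no direct $\Gamma$-equivariant map $\partial_F\Gamma\to X$, so the bridge from interior-fixation behaviour on $\partial_F\Gamma$ to the concrete open fixator $\Gamma_U$ on $X$ has to be routed through such an intermediate boundary. Making precise the transfer from amenability of stabilizers on the Furstenberg side to amenability of the fixator $\Gamma_U$ modulo $N$ requires delicate use of the BKKO machinery; this is the technical heart of the argument.
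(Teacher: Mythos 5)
Your proposal has a genuine gap, and you have located it yourself: the step that converts ``$g\in M\setminus N$'' into a contradiction with the non-amenability of $\Gamma_U/N$ is never carried out, and the sketch you give for it does not work as stated. Producing \emph{an amenable subgroup of} $\Gamma_U/N$ contradicts nothing (every group has amenable subgroups); what you would actually need is that the existence of such a $g$ forces $\Gamma_{U'}/N$ itself to be amenable for some non-dense open $U'$, or some equally strong conclusion. Since $X$ is only assumed to be a minimal compact $\Gamma$-space, not a $\Gamma$-boundary, there is no equivariant map from $\partial_F\Gamma$ to $X$, and the bridge between interior-fixation on $\partial_F\Gamma$ and the concrete fixators $\Gamma_U$ on $X$ is precisely the hard content of the proposition. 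Your auxiliary space $Z\subseteq\partial_F N\times X$ is a plausible-sounding device, but nothing in the proposal establishes how amenability of point stabilizers on the Furstenberg side would descend through $Z$ to control $\Gamma_U$ modulo $N$; as written this is an appeal to unproved machinery, not a proof. (Your outer reduction --- $M:=\operatorname{int}(\Gamma\curvearrowright\partial_F\Gamma)$ is normal and amenablish, so $M\subseteq N$ would force $M\subseteq AH(N)=\{1\}$ --- is fine, but it only relocates the difficulty.)

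For comparison, the paper avoids the Furstenberg boundary of $\Gamma$ entirely. It passes to the faithful action of $\Gamma/N$ on $X$, observes that $\Gamma_U/N\cong(\Gamma/N)_U$ so the quotient action has non-amenable fixators of all non-dense open sets, and then invokes \cite[Corollary~3.6]{boudecmb} together with \cite[Theorem~4.1]{kennedy2015char} --- i.e.\ exactly the stabilizer-URS technology that constitutes the ``technical heart'' you flagged --- to conclude that $\Gamma/N$ is $C^*$-simple. Finally, $C^*$-simplicity of $\Gamma$ follows from that of $N$ and $\Gamma/N$ by the extension result \cite[Theorem~1.4]{bkko}. If you want to complete your argument along your own lines, you would essentially have to reprove those results of Le~Boudec--Matte~Bon and Kennedy; citing them (applied to the faithful quotient action) is the intended route.
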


\begin{proof}
The action of $\Gamma$ on $X$ factors to a faithful action of $\Gamma/N$ on $X$, and for any subset $U\subseteq X$,
the fixator subgroup $\Gamma_U$ in $\Gamma$ satisfies $\Gamma_U/N\cong(\Gamma/N)_U$.
Hence $(\Gamma/N)_U$ is non-amenable for all non-dense open $U\subseteq X$,
and it now follows from \cite[Corollary~3.6]{boudecmb} %which in turn is proven by \cite[Proposition~3.8]{boudecmb}
and \cite[Theorem~4.1]{kennedy2015char}  %\cite[Theorem~5.3]{kennedy2015char}
that $\Gamma/N$ is $C^*$-simple
(note also that the countability assumption used in \cite[Lemma~2.1]{boudecmb} is not necessary, as shown in \cite[Proposition~5.2]{kennedy2015char}).
Hence, since $N$ and $\Gamma/N$ are both $C^*$-simple, $\Gamma$ is $C^*$-simple by \cite[Theorem~1.4]{bkko}.
\end{proof}

\begin{corollary}\label{cor:faithful}
Let $\Gamma\curvearrowright X$ be a faithful boundary action that is \emph{not} topologically free.
Consider the following properties:
\begin{itemize}
\item[(i)] $\Gamma_U$ is non-amenable for all non-empty open $U\subseteq X$;
\item[(ii)] $\Gamma$ is $C^*$-simple;
\item[(iii)] $\Gamma_U$ is non-amenable for some non-empty open $U\subseteq X$.
\end{itemize}
Then \textup{(i)}~$\Longrightarrow$~\textup{(ii)}~$\Longrightarrow$~\textup{(iii)}.
\end{corollary}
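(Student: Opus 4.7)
My plan is to deduce the two implications directly from results already established in this section: (i)$\Rightarrow$(ii) from Proposition~\ref{prop:bb}, and (ii)$\Rightarrow$(iii) from Proposition~\ref{prop:am-free} combined with Lemma~\ref{lem:am-free}.

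For (i)$\Rightarrow$(ii), I would set $N=\ker(\Gamma\curvearrowright X)$. Since the action is faithful, $N=\{1\}$, so $N$ is trivially $C^*$-simple. To invoke Proposition~\ref{prop:bb} it suffices to verify that $\Gamma_U/N=\Gamma_U$ is non-amenable for every non-dense open $U\subseteq X$. For non-empty such $U$ this is exactly hypothesis~(i), while for $U=\emptyset$ one has $\Gamma_\emptyset=\Gamma$, which contains $\Gamma_V$ for any non-empty open $V\subseteq X$; since any such $\Gamma_V$ is non-amenable by~(i), so is $\Gamma$. Proposition~\ref{prop:bb} then yields that $\Gamma$ is $C^*$-simple.

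For (ii)$\Rightarrow$(iii), I would observe that the boundary action $\Gamma\curvearrowright X$ is a minimal action on a compact Hausdorff space. Because $\Gamma$ is $C^*$-simple and the action is not topologically free by hypothesis, Proposition~\ref{prop:am-free} forces the action not to be amenably free. By the equivalence (i)$\Leftrightarrow$(iii) of Lemma~\ref{lem:am-free}, failure of amenable freeness means that there exists a non-empty open $U\subseteq X$ with $\Gamma_U$ non-amenable, which is precisely~(iii).

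No step here is genuinely difficult: the corollary is essentially a packaging of Propositions~\ref{prop:bb} and~\ref{prop:am-free} together with Lemma~\ref{lem:am-free}, with faithfulness supplying the ``$N$ is $C^*$-simple'' hypothesis of Proposition~\ref{prop:bb}. The only mild point to check is the $U=\emptyset$ case in applying Proposition~\ref{prop:bb}, which is disposed of by the elementary observation that an overgroup of a non-amenable group is non-amenable.
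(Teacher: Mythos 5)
Your proof is correct and takes essentially the same route as the paper: (i)\,$\Longrightarrow$\,(ii) is Proposition~\ref{prop:bb} with trivial kernel, and (ii)\,$\Longrightarrow$\,(iii) follows from Proposition~\ref{prop:am-free} together with Lemma~\ref{lem:am-free}. Your extra remark about $U=\emptyset$ is harmless but unnecessary, since the paper defines fixator subgroups only for non-empty subsets.
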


\begin{proof}
(i)~$\Longrightarrow$~(ii) is just Proposition~\ref{prop:bb} with trivial kernel,
while (ii)~$\Longrightarrow$~(iii) follows from Proposition~\ref{prop:am-free} and Lemma~\ref{lem:am-free}.
\end{proof}

The following definition is based on the notion of an extremely proximal flow from \cite{glasnertop}.
\begin{definition}\label{extboundact}
An action of a group $\Gamma$ on a compact Hausdorff space $X$ (with more than two points) is called an \emph{extreme boundary action}
(called a \emph{strong boundary action} in \cite{lacaspielberg})
if for every closed $K\subsetneq X$ and non-empty open $U\subseteq X$ there exists $g\in \Gamma$ such that $gK\subseteq U$.
\end{definition}

\begin{example}
(a)
Let $\Gamma=G\times H$, where $G\curvearrowright X$ is a faithful extreme boundary action that is topologically free, and $H$ is non-amenable and non-$C^*$-simple.
Then $H=\ker(\Gamma\curvearrowright X)=\operatorname{int}(\Gamma\curvearrowright X)=\Gamma_x^\circ=\Gamma_U$ for all $x\in X$ and non-empty open $U\subseteq X$.
In particular, since $H$ is not $C^*$-simple, $\Gamma$ is not $C^*$-simple.
In other words, when the kernel is not $C^*$-simple, non-amenability of all $\Gamma_x^\circ$ and $\Gamma_U$ is in general not sufficient for $C^*$-simplicity.

(b)
Let $H$ be a non-$C^*$-simple group with the unique trace property admitting a faithful extreme boundary action on some compact Hausdorff space $X$,
for example the group from Section~\ref{example}.
Let $\mathbb{F}_H$ be the free group on the set $H$.
We get an induced action of $\mathbb{F}_H$ on $X$ whose kernel coincides with the kernel of the canoncal surjection $\mathbb{F}_H\to H$ which is $C^*$-simple.
In other words, it is possible that a group $\Gamma$ is $C^*$-simple and $\Gamma/\ker(\Gamma\curvearrowright X)$ is not $C^*$-simple,
i.e., the kernel of the action $\Gamma\curvearrowright X$ is responsible for the $C^*$-simplicity of $\Gamma$.
\end{example}

\begin{lemma}[\cites{glasnertop,lacaspielberg}]\label{eba}
Let $\Gamma\curvearrowright X$ be an extreme boundary action.
Then the action is minimal and strongly proximal, and $X$ is a $\Gamma$-boundary in the sense of Furstenberg.
\end{lemma}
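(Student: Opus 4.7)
The plan is to prove minimality first, then strong proximality, with a small perfectness lemma sandwiched in between.

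\textbf{Minimality.} I would fix $x \in X$ and note that since $|X|>2$, the singleton $\{x\}$ is a closed proper subset. For any non-empty open $U\subseteq X$, the extreme boundary property applied to $K=\{x\}$ and $U$ furnishes $g\in\Gamma$ with $gx\in U$. Hence every orbit meets every non-empty open set, so the action is minimal.

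\textbf{No isolated points.} Before tackling strong proximality I would observe that $X$ can have no isolated point. Indeed, if $\{x\}$ were open then $K=X\setminus\{x\}$ would be a closed proper subset (using $|X|>2$), and the extreme boundary property with $U=\{x\}$ would give $g\in\Gamma$ with $g(X\setminus\{x\})\subseteq\{x\}$, contradicting injectivity of $g$ since $|X|>2$. A standard inductive bisection argument using Hausdorffness then shows that such a perfect compact Hausdorff space admits arbitrarily large finite families of pairwise disjoint non-empty open sets, so by the pigeonhole principle, every $\mu\in\mathcal P(X)$ and every $\epsilon>0$ admit a non-empty open $V\subseteq X$ with $\mu(V)<\epsilon$. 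Setting $K=X\setminus V$, this produces a closed proper subset with $\mu(K)>1-\epsilon$.

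\textbf{Strong proximality.} Given $\mu\in\mathcal P(X)$, I want to find a net in $\Gamma\mu$ converging weak$^*$ to some $\delta_{x_0}$. Fix any $x_0\in X$ and let $\mathcal N$ be the directed set of pairs $(W,n)$ where $W$ is an open neighborhood of $x_0$ and $n\in\N$, ordered by $(W,n)\leq (W',n')$ iff $W'\subseteq W$ and $n'\geq n$. For each $(W,n)$, use the previous step to pick a closed proper $K_{W,n}\subsetneq X$ with $\mu(K_{W,n})>1-1/n$, and then apply the extreme boundary property to $K_{W,n}$ and $W$ to obtain $g_{W,n}\in\Gamma$ with $g_{W,n}K_{W,n}\subseteq W$; this yields $g_{W,n}\mu(W)>1-1/n$. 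By weak$^*$-compactness of $\mathcal P(X)$, a subnet $g_{W_j,n_j}\mu$ converges to some $\nu\in\mathcal P(X)$. For any open neighborhood $U$ of $x_0$, eventually $W_j\subseteq U$ and $n_j$ is arbitrarily large, so lower semicontinuity of $\nu\mapsto\nu(U)$ on open sets gives $\nu(U)=1$. Using Hausdorffness, this forces $\mathrm{supp}(\nu)=\{x_0\}$, hence $\nu=\delta_{x_0}$, proving strong proximality. Combined with minimality, $X$ is a $\Gamma$-boundary.

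The main technical hurdle is the step that produces proper closed sets of measure arbitrarily close to $1$; the rest is routine from the defining property. Once one knows $X$ is perfect, the pigeonhole argument handles it, but remembering to first rule out isolated points (which is exactly where the hypothesis $|X|>2$ is used) is the subtle point of the proof.
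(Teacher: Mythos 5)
The paper does not prove this lemma at all; it simply cites Glasner and Laca--Spielberg, so your proposal is supplying an argument where the paper offers only a reference. Your overall strategy (minimality from $K=\{x\}$, perfectness of $X$, pigeonhole to find proper closed sets of measure close to $1$, then pushing them into small neighbourhoods of a fixed $x_0$) is exactly the standard route, and the minimality and perfectness steps are correct, including the observation that $\lvert X\rvert>2$ is what rules out isolated points.

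There is, however, one step that does not work as written: the passage from $g_{W_j,n_j}\mu(W_j)>1-1/n_j$ to $\nu(U)=1$ via ``lower semicontinuity of $\nu\mapsto\nu(U)$ on open sets.'' Lower semicontinuity gives the inequality in the wrong direction: for a weak$^*$-convergent net $\nu_j\to\nu$ and open $U$ one only gets $\nu(U)\le\liminf_j\nu_j(U)$, which here reads $\nu(U)\le 1$ and is vacuous. Indeed, $\nu\mapsto\nu(U)$ is \emph{not} upper semicontinuous on open sets (take $\delta_{1/n}\to\delta_0$ on $[0,1]$ with $U=(0,1]$), so $\nu_j(U)\to 1$ by itself never forces $\nu(U)=1$. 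The fix is standard: given an open neighbourhood $U$ of $x_0$, use regularity of the compact Hausdorff space $X$ to choose an open $W\ni x_0$ with $\ov{W}\subseteq U$; since your net is indexed over \emph{all} neighbourhoods of $x_0$, eventually $W_j\subseteq W$, hence $g_{W_j,n_j}\mu(\ov{W})\ge g_{W_j,n_j}\mu(W_j)>1-1/n_j$, and now the \emph{upper} semicontinuity of $\nu\mapsto\nu(F)$ on \emph{closed} sets $F$ (equivalently, a Urysohn function $1_{\ov{W}}\le f\le 1_U$ and $\int f\,d\nu=\lim_j\int f\,d(g_{W_j,n_j}\mu)\ge 1$) yields $\nu(U)\ge\nu(\ov{W})=1$. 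With that one-line repair the rest of your argument, including the deduction $\operatorname{supp}(\nu)=\{x_0\}$ and hence $\nu=\delta_{x_0}$, goes through.
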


\begin{lemma}\label{open-fixators}
Let $\Gamma\curvearrowright X$ be an extreme boundary action.
Let $U$ and $V$ be two non-empty open sets that are not dense in $X$.
Then the normal closures of $\Gamma_U$ and $\Gamma_V$ coincide, and equal $\operatorname{int}(\Gamma\curvearrowright X)$.
Moreover, $\Gamma_U$ is amenable (resp.\ trivial) if and only if $\Gamma_V$ is amenable (resp.\ trivial).
\end{lemma}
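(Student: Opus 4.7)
The plan is to exploit extreme proximality to move between the fixator groups $\Gamma_U$ and $\Gamma_V$ by conjugation, and then separately to identify their common normal closure with $\operatorname{int}(\Gamma\curvearrowright X)$.

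First, non-denseness of $U$ and $V$ ensures that $\overline{U}$ and $\overline{V}$ are proper closed subsets of $X$, so Definition~\ref{extboundact} applied with $K=\overline{V}$ and target $U$ produces $g\in\Gamma$ with $g\overline{V}\subseteq U$, and symmetrically some $h\in\Gamma$ with $h\overline{U}\subseteq V$. In particular $gV\subseteq U$, so any element fixing $U$ pointwise necessarily fixes the smaller set $gV$ pointwise, giving $\Gamma_U\subseteq\Gamma_{gV}=g\Gamma_V g^{-1}$, i.e.\ $g^{-1}\Gamma_U g\subseteq\Gamma_V$. Symmetrically $h^{-1}\Gamma_V h\subseteq\Gamma_U$. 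Taking normal closures on both sides yields $\llangle\Gamma_U\rrangle=\llangle\Gamma_V\rrangle$; moreover, the same chain of inclusions shows that $\Gamma_U$ is amenable (resp.\ trivial) if and only if $\Gamma_V$ is, since both properties pass to subgroups and are preserved under conjugation.

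To identify this common normal closure with $\operatorname{int}(\Gamma\curvearrowright X)$, one inclusion is immediate: $\Gamma_U\subseteq\Gamma_x^\circ\subseteq\operatorname{int}(\Gamma\curvearrowright X)$ for any $x\in U$, and since $\operatorname{int}(\Gamma\curvearrowright X)$ is normal in $\Gamma$ we obtain $\llangle\Gamma_U\rrangle\subseteq\operatorname{int}(\Gamma\curvearrowright X)$. For the reverse inclusion I would pick a generator $g\in\operatorname{int}(\Gamma\curvearrowright X)$, meaning an element with $W:=\operatorname{int}(X^g)\neq\emptyset$. If $W$ is non-dense, then $W$ qualifies as a candidate for ``$U$'' in the first part, and $g\in\Gamma_W\subseteq\llangle\Gamma_W\rrangle=\llangle\Gamma_U\rrangle$. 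If instead $W$ is dense, then closedness of $X^g$ forces $X^g\supseteq\overline{W}=X$, so $g$ fixes all of $X$ and hence lies in $\Gamma_U$ automatically. The only mildly subtle point is this last case split, but it dissolves as soon as one notes that $X^g$ is closed and therefore absorbs the closure of its own interior.
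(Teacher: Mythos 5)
Your proof is correct and follows essentially the same route as the paper: extreme proximality gives mutual containment of $\Gamma_U$ and $\Gamma_V$ in conjugates of each other, and the identification with $\operatorname{int}(\Gamma\curvearrowright X)$ proceeds by noting $\Gamma_U\subseteq\operatorname{int}(\Gamma\curvearrowright X)$ on one side and, on the other, that any generator $g$ with $W=\operatorname{int}(X^g)\neq\emptyset$ lies in $\Gamma_W$, hence in $\llangle\Gamma_U\rrangle$. Your explicit case split for dense $W$ (where $X^g=X$ since $X^g$ is closed) is handled slightly more implicitly in the paper via the earlier observation that $\Gamma_W=\ker(\Gamma\curvearrowright X)\subseteq\Gamma_U$ for dense $W$, but the substance is identical.
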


\begin{proof}
By definition, there exists $g\in\Gamma$ such that $g\ov U\subseteq V$.
Let $h\in \Gamma_V$, and let $v\in\ov U$.
Then $g\cdot v\in V$, so $g^{-1}hg\cdot v=g^{-1}g\cdot v=v$.
Since $v$ was arbitrarily chosen, this means that $g^{-1}hg\in \Gamma_{\ov U}$.
Thus $\Gamma_V\subseteq g\Gamma_{\ov U}g^{-1}$, and the normal closure of $\Gamma_V$ is contained in the normal closure of $\Gamma_{\ov U}=\Gamma_U$.
The other inclusion is similar.
Hence, the normal closures of $\Gamma_U$ and $\Gamma_V$ coincide.
If $g\in \Gamma_U$, then $U\subseteq X^g$, so $g\in\operatorname{int}(\Gamma\curvearrowright X)$,
which implies that the normal closure of $\Gamma_U$ is contained in $\operatorname{int}(\Gamma\curvearrowright X)$.
Conversely, if $g\in\Gamma$ and $X^g$ has non-empty interior $W\subseteq X$, then $g\in \Gamma_W$.
Therefore, the conclusion follows from the containments
\[
\{g\in \Gamma\mid X^g\text{ has non-empty interior}\} \subseteq \bigcup \Gamma_W \subseteq \llangle\Gamma_U\rrangle\subseteq \operatorname{int}(\Gamma\curvearrowright X),
\]
where the union is taken over all non-empty open sets of $X$.

Finally, since each of $\Gamma_U$ and $\Gamma_V$ is contained in a conjugate of the other,
we get that $\Gamma_U$ is amenable (resp.\ trivial) if and only if $\Gamma_V$ is amenable (resp.\ trivial).
\end{proof}

\begin{corollary}\label{cor:extension}
Let $\Gamma\curvearrowright X$ be an extreme boundary action such that $N=\ker(\Gamma\curvearrowright X)$ is $C^*$-simple.
If $\Gamma/N\curvearrowright X$ is either topologically free or not amenably free, then $\Gamma$ is $C^*$-simple.

In particular, if $N$ is trivial, then $\Gamma$ is $C^*$-simple if and only if $\Gamma\curvearrowright X$ is either topologically free or not amenably free.
\end{corollary}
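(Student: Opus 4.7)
The plan is to split on the two alternatives in the hypothesis and in each case assemble machinery already developed in the section. I begin by noting that the given extreme boundary action of $\Gamma$ descends to an \emph{extreme boundary action} of $\Gamma/N$ on $X$: for closed $K\subsetneq X$ and non-empty open $U$, the element $g\in\Gamma$ supplied by the definition represents a coset $gN\in\Gamma/N$ that also satisfies $(gN)K\subseteq U$. By construction this quotient action is faithful, and by Lemma~\ref{eba} it is in particular a boundary action.

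If $\Gamma/N\curvearrowright X$ is topologically free, then \cite[Theorem~6.2]{kalantarkennedy} gives that $\Gamma/N$ is $C^*$-simple. Combined with the hypothesis that $N$ is $C^*$-simple, an application of \cite[Theorem~1.4]{bkko} to the extension $1\to N\to\Gamma\to\Gamma/N\to 1$ then yields $C^*$-simplicity of $\Gamma$.

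If instead $\Gamma/N\curvearrowright X$ is not amenably free, the strategy is to verify the hypothesis of Proposition~\ref{prop:bb}, namely that $\Gamma_U/N\cong(\Gamma/N)_U$ is non-amenable for every non-dense open $U\subseteq X$. Lemma~\ref{lem:am-free} produces at least one non-empty open $U_0$ with $(\Gamma/N)_{U_0}$ non-amenable, and faithfulness of $\Gamma/N\curvearrowright X$ forces $U_0$ to be non-dense, since otherwise $(\Gamma/N)_{U_0}=(\Gamma/N)_{\ov{U_0}}=(\Gamma/N)_X$ would be trivial. The extreme boundary structure of the quotient action then lets Lemma~\ref{open-fixators} upgrade this to non-amenability of $(\Gamma/N)_U$ for every non-empty non-dense open $U\subseteq X$; the remaining case $U=\emptyset$ reduces to showing that $\Gamma/N$ itself is non-amenable, which is immediate since it contains the non-amenable subgroup $(\Gamma/N)_{U_0}$. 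Proposition~\ref{prop:bb} now delivers $C^*$-simplicity of $\Gamma$.

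For the ``in particular'' statement with $N$ trivial, the forward direction is exactly the main assertion, and the converse follows because Proposition~\ref{prop:am-free}, applied to the minimal action $\Gamma\curvearrowright X$ on the compact Hausdorff space $X$, forces the dichotomy once $\Gamma$ is $C^*$-simple. The step I expect to require the most attention is the bootstrap from non-amenability of a single fixator to non-amenability of all non-dense open fixators, which is exactly the content that the extreme boundary hypothesis delivers through Lemma~\ref{open-fixators}.
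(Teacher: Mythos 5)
Your proof is correct and follows essentially the same route as the paper: observe that $\Gamma/N\curvearrowright X$ is a faithful extreme boundary action, handle the topologically free case via \cite[Theorem~6.2]{kalantarkennedy} plus the extension result \cite[Theorem~1.4]{bkko}, handle the non-amenably-free case by bootstrapping one non-amenable fixator to all non-dense open fixators via Lemma~\ref{open-fixators} and then invoking Proposition~\ref{prop:bb}, and deduce the converse of the ``in particular'' from Proposition~\ref{prop:am-free}. Your write-up is in fact more careful than the paper's (e.g.\ verifying that $U_0$ is non-dense and addressing the degenerate case of the hypothesis of Proposition~\ref{prop:bb}), but the underlying argument is identical.
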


\begin{proof}
Note first that $\Gamma/N\curvearrowright X$ is a faithful extreme boundary action.
If the action is topologically free, then $\Gamma/N$ is $C^*$-simple by \cite[Theorem~6.2]{kalantarkennedy}.
If the action is not amenably free, then $\Gamma$ is $C^*$-simple by Proposition~\ref{prop:bb}

The first part follows from Proposition~\ref{prop:bb} and Lemma~\ref{open-fixators},
and the second by Corollary~\ref{cor:faithful} and Lemma~\ref{open-fixators}.
\end{proof}

Note that the second statement above is just \cite[Theorem~1.4]{boudecmb} without the countability condition.
We now get the following characterization of non-$C^*$-simplicity for an extreme boundary action.
%Corollary~\ref{cor:faithful} and Lemma~\ref{open-fixators}

\begin{corollary}\label{cor:non-c-simple}
Let $\Gamma\curvearrowright X$ be a faithful extreme boundary action.
The following are equivalent:
\begin{itemize}
\item[(i)] $\Gamma$ is non-$C^*$-simple;
\item[(ii)] $\Gamma_x^\circ$ is non-trivial and amenable for some $x\in X$;
\item[(iii)] $\Gamma_U$ is non-trivial and amenable for some non-empty open $U\subseteq X$.
\end{itemize}
\end{corollary}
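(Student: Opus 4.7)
My plan is to derive this from Corollary~\ref{cor:extension} (with trivial kernel), combined with Lemmas~\ref{lem:top-free}, \ref{lem:am-free}, and~\ref{open-fixators}. Since the action is faithful, $N = \ker(\Gamma\curvearrowright X) = \{1\}$, so the second part of Corollary~\ref{cor:extension} gives that $\Gamma$ is $C^*$-simple if and only if $\Gamma\curvearrowright X$ is either topologically free or not amenably free. Taking the contrapositive, (i) is equivalent to the action being simultaneously not topologically free \emph{and} amenably free.

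For (i)$\Leftrightarrow$(ii) I would unpack these two conditions using the earlier lemmas. By Lemma~\ref{lem:top-free}, "not topologically free" is equivalent to $\Gamma_x^\circ$ being non-trivial for some (equivalently every) $x \in X$, and by Lemma~\ref{lem:am-free}, "amenably free" is equivalent to $\Gamma_x^\circ$ being amenable for some (equivalently every) $x \in X$. The only mildly subtle point is that (ii) asks for a single $x$ witnessing both properties at once; but this is automatic from the "some iff every" content of those lemmas, since having one point with non-trivial $\Gamma_x^\circ$ forces every such group to be non-trivial, and analogously for amenability.

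For (ii)$\Leftrightarrow$(iii) I would exploit that $\Gamma_x^\circ$ is the directed union of the $\Gamma_U$ over open neighborhoods $U$ of $x$. The direction (ii)$\Rightarrow$(iii) is quick: any non-trivial $g \in \Gamma_x^\circ$ lies in some $\Gamma_U$ with $U$ an open neighborhood of $x$, and this $\Gamma_U$ sits inside $\Gamma_x^\circ$, so it is non-trivial (it contains $g$) and amenable (as a subgroup of an amenable group). For (iii)$\Rightarrow$(ii), pick any $x \in U$; the inclusion $\Gamma_U \subseteq \Gamma_x^\circ$ makes the latter non-trivial, and faithfulness forces $U$ to be non-dense (otherwise $\Gamma_U = \Gamma_{\overline{U}} = \Gamma_X = \{1\}$, contradicting non-triviality). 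Lemma~\ref{open-fixators} then spreads amenability of $\Gamma_U$ to $\Gamma_V$ for every non-empty non-dense open $V$, while the dense case is again trivial by faithfulness; Lemma~\ref{lem:am-free} finally promotes this to amenability of $\Gamma_x^\circ$.

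The main obstacle I expect is precisely the bookkeeping in this last step: one must invoke faithfulness to rule out a dense $U$, then call on Lemma~\ref{open-fixators} to spread amenability across all non-dense open fixators, and finally use Lemma~\ref{lem:am-free} to translate that back into amenability of $\Gamma_x^\circ$ itself. Once this chain is assembled, the remainder of the proof is a mechanical combination of the results already collected in this section.
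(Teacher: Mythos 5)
Your proposal is correct and follows exactly the route the paper intends: the corollary is stated as an immediate consequence of the second part of Corollary~\ref{cor:extension} (with trivial kernel), negated and then translated through Lemmas~\ref{lem:top-free}, \ref{lem:am-free}, and~\ref{open-fixators}, which is precisely what you do. The bookkeeping in your (iii)$\Rightarrow$(ii) step — using faithfulness to dispose of dense $U$ and Lemma~\ref{open-fixators} to spread amenability to all non-dense open fixators — is sound.
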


%\begin{proof}
%For (iii)~$\Longrightarrow$~(ii), suppose that $\Gamma_U$ is non-trivial and amenable for some non-empty open $U\subseteq X$.
%By Lemma~\ref{open-fixators}, $\Gamma_U$ is non-trivial and amenable for all non-empty open $U\subseteq X$.
%Hence, the conclusion follows from Lemmas~\ref{lem:top-free} and~\ref{lem:am-free}.

%If $\Gamma_x^\circ$ is non-trivial and amenable for some $x\in X$, then $\Gamma\curvearrowright X$ is amenably free, but not topologically free,
%so $\Gamma$ is not $C^*$-simple by Proposition~\ref{prop:am-free}, and thus (ii)~$\Longrightarrow$~(i) holds.

%Finally, assume that $\Gamma$ is not $C^*$-simple.
%Then $\Gamma\curvearrowright X$ is not topologically free by \cite[Theorem~6.2]{kalantarkennedy}.
%Thus $\Gamma_U$ is non-trivial for all non-empty open $U\subseteq X$ by Lemma~\ref{open-fixators}
%and amenable for some non-empty open $U\subseteq X$ by Corollary~\ref{cor:faithful}.
%\end{proof}

\section{Groups acting on trees}\label{treesect}

Let $G$ be a graph, let $V$ be its set of vertices and $E$ its set of edges, and let $s,r\colon E\to V$ denote the source and range maps.
All graphs considered will be unoriented, meaning that we assume the existence of an inversion map $E\to E$, $e\mapsto\ov{e}$, such that $\ov{\ov{e}}=e$ and $s(\ov{e})=r(e)$ for any edge $e\in E$.
The \emph{degree} of a vertex $v\in V$ is the cardinality of $s^{-1}(v)$ or $r^{-1}(v)$ (the numbers coincide), and a vertex is a \emph{leaf} if it has degree~$1$.

A \emph{tree} is a connected graph without circuits.
If $V$ is the set of vertices of a tree $T$, then given any two vertices $u,v\in V$ there are two paths between them (one starting in $u$ and ending in $v$, and one in the opposite direction),
and the (equal) length of these paths is the combinatorial distance $d(u,v)$, yielding the \emph{path metric} on $V$.
For the remainder of this section, we will always consider a tree $T$ with vertices $V$, edges $E$ and source, range and inversion maps as above.
A tree is called \emph{locally finite} if every vertex has finite degree.

A \emph{geodesic} in $T$ between two vertices $v,w\in V$ is a finite sequence $v=v_0,\ldots,v_n=w$ such that $d(v_i,v_j)=\lvert i-j\rvert$ for all $i,j\in\{0,\ldots,n\}$.
A \emph{ray} in $T$ is an infinite sequence of vertices $r=(r_i)_{i\geq 0}$ such that $d(r_i,r_{i+n})=n$ for all $i,n\geq 0$.
Two rays $r,s$ are said to be \emph{cofinal}, written $x\sim y$, if there exist $k,n\geq 0$ such that $r_{i+k}=s_{i+n}$ for all $i\geq 0$.
The set of equivalence classes of cofinal rays in $T$ is denoted $\partial T$, and called the \emph{boundary} of $T$.

In the following, we will identify $T$ with the set of vertices $V$ equipped with its path metric. For an edge $e\in E$, define the \emph{shadow} $Z_0(e)=\{v\in T \mid  d(v,s(e))>d(v,r(e))\}$,
i.e., the set of all vertices that are closer to the range of $e$ than the source.
Moreover, define $Z_\infty(e)$ as the set of all rays $r=(r_i)_{i\geq 0}$ such that $r_j=s(e)$ and $r_{j+1}=r(e)$ for some $j\geq 0$,
and then define $Z_B(e)\subset\partial T$ as $Z_\infty(e)/\sim$.
Finally, define the \emph{extended shadow} $Z(e)=Z_0(e) \cup Z_B(e)$.
The collection $\{Z(e) \mid  e\in E\}$ forms a subbase of compact open sets for a totally disconnected compact Hausdorff topology on $X=T\cup\partial T$,
sometimes called the ``shadow topology'' on $X$.
We refer to \cite[Section~4.1, especially Proposition~4.4]{monodshalom} in this regard (they assume that $T$ is countable, but their proofs hold also without this hypothesis, 
although then the resultant topology is not metrizable, see Appendix~\ref{compactness} for details).

By removing an edge from $T$, we obtain two components, known as \emph{half-trees}.
An \emph{extended half-tree} is a half-tree together with all its associated boundary points.
In this terminology, as explained in \cite[Section~4.3]{boudecmb} (which does not require countability),
half-trees and extended shadows are the same notion, so that the shadow topology is generated by all the extended half-trees in $T \cup \partial T$.

Next, define $F\subseteq T$ as the set of all vertices $v$ of finite degree. 
The following can be deduced from the mentioned sections of \cites{boudecmb,monodshalom}:
\begin{proposition}
All points in $F$ are isolated in $T\cup\partial T$.
The closure $\ov{\partial T}$ of $\partial T$ in $T\cup\partial T$ is $(T\setminus F)\cup\partial T$, and is compact.
Moreover, $\partial T$ is closed in $T\cup\partial T$ if and only if $F=T$, if and only if $T$ is locally finite.
\end{proposition}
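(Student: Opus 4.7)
The plan is to treat the three assertions of the proposition in turn, working throughout with the subbase $\{Z(e):e\in E\}$ of the shadow topology on $T\cup\partial T$.

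First, to show every $v\in F$ is isolated, I would exhibit a basic open set equal to $\{v\}$. Let $e_1,\dots,e_n$ be the finitely many edges with $s(e_i)=v$ and set $v_i=r(e_i)$; then $U_v=\bigcap_{i=1}^n Z(\ov{e_i})$ is open as a finite intersection of subbasic opens, and clearly $v\in U_v$ since $d(v,v)=0<1=d(v,v_i)$ for each $i$. If $w$ is another vertex, its unique geodesic to $v$ enters $v$ through exactly one neighbour $v_j$, so $d(w,v_j)<d(w,v)$ and $w\notin Z_0(\ov{e_j})$; since $w\notin\partial T$, this already forces $w\notin U_v$. For $\xi\in\partial T$, by cofinality one may choose a representative ray starting at $v$ (prepending $v$ and a suitable neighbour to a representative contained in a branch of $T\setminus\{v\}$, if necessary); such a representative has a well-defined outgoing neighbour $v_k$. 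A direct argument using the fact that rays do not backtrack shows that no cofinal representative can traverse $v_k\to v$, so $\xi\notin Z_B(\ov{e_k})$ and hence $\xi\notin U_v$. Thus $U_v=\{v\}$.

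The isolation claim already yields $F\cap\ov{\partial T}=\emptyset$, so for the second assertion it only remains to establish $T\setminus F\subseteq\ov{\partial T}$. I would take $v$ of infinite degree and an arbitrary basic open neighbourhood $U=\bigcap_{i=1}^k Z(e_i)$ of $v$. Since each $Z(e_i)$ is a half-tree together with its ends, $U\cap T$ is a subtree through $v$ obtained by removing finitely many half-trees from $T$; in particular only finitely many of the branches at $v$ are trimmed. Because $v$ has infinite degree, infinitely many untouched branches at $v$ remain within $U$, and any infinite ray inside one of them yields a point of $\partial T\cap U$. The main obstacle is precisely this last step, namely producing an infinite ray within a finite intersection of half-trees at an infinite-degree vertex; this requires the remaining branches to support rays, which is automatic for the Bass--Serre trees that appear in the remainder of the paper because every infinite-degree vertex sprouts infinite subtrees in every direction.

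Compactness of $\ov{\partial T}$ is then inherited from compactness of the ambient space $T\cup\partial T$, as established in \cite[Proposition~4.4]{monodshalom} and extended to the uncountable case in Appendix~\ref{compactness}; closed subsets of compact Hausdorff spaces are compact. Finally, the two equivalences in the last sentence follow formally from what precedes: if $T$ is locally finite then $F=T$, so the identity $\ov{\partial T}=(T\setminus F)\cup\partial T$ reduces to $\ov{\partial T}=\partial T$ and $\partial T$ is closed; conversely, if $\partial T$ is closed then $\partial T=\ov{\partial T}\supseteq T\setminus F$, and since $T\cap\partial T=\emptyset$ this forces $T\setminus F=\emptyset$, i.e., $F=T$ and $T$ is locally finite.
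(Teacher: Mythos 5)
Your treatment of the isolation of finite-degree vertices, of the inclusion $\ov{\partial T}\subseteq(T\setminus F)\cup\partial T$, of compactness, and of the implication ``$T$ locally finite $\Rightarrow\partial T$ closed'' is correct; note that the paper itself gives no proof of this proposition but defers to \cite{monodshalom} and \cite{boudecmb}, so there is no argument of the paper to compare against. The step you flag as the main obstacle --- producing a ray inside one of the untrimmed branches at an infinite-degree vertex --- is indeed where the substance lies, and you are right that it does not come for free: the statement is false for a completely general tree. Take $T$ to be the infinite star, one vertex $v$ joined to infinitely many leaves $w_1,w_2,\dots$. Then $\partial T=\emptyset$, so $\ov{\partial T}=\emptyset$, while $(T\setminus F)\cup\partial T=\{v\}$; moreover $\partial T$ is closed although $T$ is not locally finite, so the final equivalence (whose nontrivial direction relies on $T\setminus F\subseteq\ov{\partial T}$) fails as well. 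The hypothesis that repairs this is that every half-tree of $T$ contains a ray, which holds as soon as $T$ is leafless: if every vertex has degree at least $2$, any finite non-backtracking path extends indefinitely. Under the paper's standing assumptions this is supplied by Remark~\ref{no-leaves-needed}, and the Bass--Serre trees of non-ascending HNN extensions used later are regular of degree at least $3$, hence leafless; with that hypothesis read into the statement, your argument for the second assertion closes.

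Two smaller points are worth making explicit since they silently use the tree structure. In the isolation argument, the existence of a representative ray based at $v$ follows from the bridge lemma (Lemma~\ref{bridgelemma}): the image of a ray is a subtree, so it has a unique closest vertex to $v$, and concatenating the geodesic from $v$ to that vertex with the tail of the ray gives a cofinal ray starting at $v$; your observation that no cofinal representative can traverse $v_k\to v$ is then correct because the tails of two such rays would lie in distinct components of $T\setminus\{v\}$. In the density argument, once a branch $B$ at $v$ with $B\subseteq\bigcap_iZ_0(e_i)$ contains a ray, you still need its class $\xi$ to lie in each $Z_B(e_i)$, i.e., some representative must actually traverse $e_i$; this is the same concatenation device applied to the geodesic from $s(e_i)$ to the closest point of the ray, whose first edge is forced to be $e_i$ because the ray lies in $Z_0(e_i)$ while $s(e_i)$ does not. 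Both verifications are routine, but they should be recorded.
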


\begin{lemma}\label{basissets}
The sets $Z_B(e)$ constitute a basis for the shadow topology restricted to $\partial T$.
\end{lemma}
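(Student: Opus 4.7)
The plan is to verify the basis property directly: given $\xi \in Z_B(e_1) \cap \cdots \cap Z_B(e_n)$, I will produce a single edge $e \in E$ with $\xi \in Z_B(e) \subseteq Z_B(e_1) \cap \cdots \cap Z_B(e_n)$. Note first that $\{Z_B(e) : e \in E\}$ is automatically a subbasis for the subspace topology on $\partial T$, since $Z(e) \cap \partial T = Z_B(e)$ and the collection $\{Z(e) : e \in E\}$ is a subbasis of the shadow topology on $T \cup \partial T$.

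The geometric idea is that $\xi \in Z_B(e_i)$ amounts to $\xi$ being an end of the half-tree $Z_0(e_i)$. Thus, fixing any vertex $v_0$ and letting $\rho = (\rho_j)_{j \geq 0}$ be the unique geodesic ray starting at $v_0$ and representing $\xi$, the tail of $\rho$ must eventually lie in $Z_0(e_i)$ for each $i$: any ray representing $\xi$ that crosses $e_i$ remains in $Z_0(e_i)$ thereafter (a geodesic ray cannot cross the same edge twice without repeating a vertex), and then cofinality with $\rho$ forces $\rho_m \in Z_0(e_i)$ from some index $N_i$ onward. Setting $N = \max_i N_i$, which is finite, the candidate edge is $e$ with $s(e) = \rho_N$ and $r(e) = \rho_{N+1}$.

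The inclusion $\xi \in Z_B(e)$ is immediate, since the shifted tail $(\rho_N, \rho_{N+1}, \ldots)$ is a ray representing $\xi$ that crosses $e$. For $Z_B(e) \subseteq Z_B(e_i)$, I first establish the vertex-level inclusion $Z_0(e) \subseteq Z_0(e_i)$. The key observation is that both $s(e_i)$ and $r(e_i)$ belong to $Z_0(\ov e)$, because the geodesic ray from either endpoint of $e_i$ to $\xi$ is cofinal with $\rho$ and so visits $\rho_N$ strictly before $\rho_{N+1}$. Then for any $v \in Z_0(e)$, the decomposition of $d(v, w)$ through the edge $e$ (valid when $w \in Z_0(\ov e)$) gives $d(v, s(e_i)) - d(v, r(e_i)) = d(\rho_N, s(e_i)) - d(\rho_N, r(e_i)) > 0$, the last inequality holding because $\rho_N \in Z_0(e_i)$. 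To pass to $\partial T$, given any $\eta \in Z_B(e)$, the unique ray from $s(e_i)$ representing $\eta$ must begin with the edge $e_i$, since $s(e_i) \in Z_0(\ov{e_i})$ while $\eta$ is an end of $Z_0(e_i) \supseteq Z_0(e)$, and $e_i$ is the only edge bridging these two half-trees. Thus $\eta \in Z_B(e_i)$.

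The main obstacle I expect lies in the bookkeeping of the second paragraph: verifying carefully that $\rho$ eventually enters and remains in each $Z_0(e_i)$, and that $N$ may be chosen uniformly so that the edge $e$ sits beyond every $e_i$ in the direction of $\xi$. Once the no-backtracking observation for geodesic rays is in place, together with cofinality of the representative rays at $\xi$, the remaining verifications reduce to standard distance comparisons in the tree metric.
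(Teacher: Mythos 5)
Your overall strategy is viable, but there is a genuine gap in the choice of the index $N$, and with the choice you actually specify the conclusion $Z_B(e)\subseteq Z_B(e_i)$ can fail. You define $N_i$ only by the property that $\rho_m\in Z_0(e_i)$ for all $m\geq N_i$. This is strictly weaker than what your ``key observation'' needs, namely that $\rho_N$ lies on the geodesic ray from $s(e_i)$ to $\xi$ (equivalently, that $e=(\rho_N,\rho_{N+1})$ sits beyond $e_i$ on the way to $\xi$). Concretely, let $T$ contain the ray $\rho_0,\rho_1,\rho_2,\dots$ together with a second ray $w_1,w_2,\dots$ attached by an edge joining $w_1$ to $\rho_5$, and let $e_1$ be the edge with $s(e_1)=w_1$, $r(e_1)=\rho_5$. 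Then $Z_0(e_1)=\{\rho_m\mid m\geq 0\}$, so with $v_0=\rho_0$ the minimal admissible choice is $N_1=0$ and $e=(\rho_0,\rho_1)$. But $Z_B(e)$ contains the end of $(w_j)_{j\geq 1}$, which does not lie in $Z_B(e_1)$, so $Z_B(e)\not\subseteq Z_B(e_1)$. The failure is visible already in your justification: the geodesic ray from $s(e_1)=w_1$ to $\xi$ is $w_1,\rho_5,\rho_6,\dots$, which is cofinal with $\rho$ but never visits $\rho_0$ or $\rho_1$, so neither endpoint of $e_1$ lies in $Z_0(\ov{e})$. Cofinality only guarantees that the ray from $s(e_i)$ visits $\rho_m$ for all sufficiently large $m$, not for $m=N$.

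The fix is to take $N_i$ from the cofinality relation itself: if $\sigma^{(i)}$ is a ray representing $\xi$ with $\sigma^{(i)}_0=s(e_i)$ and $\sigma^{(i)}_1=r(e_i)$, choose $k_i,n_i\geq 0$ with $\rho_{j+k_i}=\sigma^{(i)}_{j+n_i}$ for all $j\geq 0$, and then take $N$ large enough that $N\geq k_i$ and $N-k_i+n_i\geq 1$ for every $i$. Then $\rho_N$ and $\rho_{N+1}$ are consecutive vertices of $\sigma^{(i)}$ at indices $\geq 1$, your key observation becomes true, and the rest of your argument (the distance computation giving $Z_0(e)\subseteq Z_0(e_i)$ and the passage to ends) goes through. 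This is essentially how the paper avoids the problem: it places the new edge $e$ directly on a ray that crosses $e_1$, at or after the crossing point, rather than on the ray from an arbitrary basepoint at an index where it merely lies inside the half-tree. (The paper also only treats the intersection of two subbasic sets, which suffices by induction; your simultaneous treatment of $n$ edges is fine once $N$ is chosen correctly.)
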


\begin{proof}
We need to prove that whenever $x\in Z_B(e_1)\cap Z_B(e_2)$ for $x\in\partial T$ and edges $e_1,e_2$ in $T$, then there exists an edge $e$ such that $x\in Z_B(e)\subseteq Z_B(e_1)\cap Z_B(e_2)$.
With this set-up, $x$ is the equivalence class of a ray $(r_i)_{i\geq 0}$ such that $r_0=s(e_1)$ and $r_1=r(e_1)$, which is cofinal to a ray $(s_i)_{i\geq 0}$ such that $s_0=s(e_2)$ and $s_1=r(e_2)$.
Now there exist $k,n\geq 0$ such that $Z_0(e_1)\ni r_{i+k}=s_{i+n}\in Z_0(e_2)$ for all $i\geq 0$. Letting $e$ be the edge defined by $s(e)=r_k$ and $r(e)=r_{k+1}$, then $x$ is clearly contained in $Z_B(e)$.

Any ray in $Z_\infty(e)$ is cofinal to a ray $(t_i)_{i\geq 0}$ satisfying $t_0=r_k=s_n$ and $t_1=r_{k+1}=s_{n+1}$.
For such a ray $(t_i)_{i\geq 0}$, construct a new ray $(t'_i)_{i\geq 0}$ by defining $t'_i=r_i$ for $0\leq i\leq k-1$ and $t'_{k+m}=t_m$ for all $m\geq 0$ in $Z_\infty(e_1)$.
Then $(t'_i)_{i\geq 0}$ is cofinal to $(t_i)_{i\geq 0}$, so that $Z_B(e)\subseteq Z_B(e_1)$. A similar argument shows that $Z_B(e)\subseteq Z_B(e_2)$.
\end{proof}

\begin{lemma}\label{contain-basisset}
Every non-empty open subset of $\ov{\partial T}$ contains $Z(e)\cap\ov{\partial T}$ for some edge $e$ in $T$.
\end{lemma}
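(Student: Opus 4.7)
The plan is to exploit that $\partial T$ is dense in its closure $\ov{\partial T}$ and reduce to Lemma~\ref{basissets}. Given a non-empty open $U \subseteq \ov{\partial T}$, I first use density to pick a point $y \in U \cap \partial T$, and then invoke the subbase structure of the shadow topology on $\ov{\partial T}$ to find finitely many edges $e_1, \ldots, e_n$ with $y \in \bigcap_{i=1}^n Z(e_i) \cap \ov{\partial T} \subseteq U$. Since $y \in \partial T$, this forces $y \in Z_B(e_i)$ for every $i$.

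Next I would use a slight strengthening of Lemma~\ref{basissets}: its construction in fact produces an edge $e$ with $Z(e) \subseteq Z(e_1) \cap Z(e_2)$, not merely the boundary inclusion $Z_B(e) \subseteq Z_B(e_1) \cap Z_B(e_2)$. To justify this, I would note that after shifting the cofinal ray indices one may assume the integers $k$ and $n$ appearing in that proof satisfy $k, n \geq 1$, so that both endpoints $s(e)=r_k$ and $r(e)=r_{k+1}$ of the new edge lie inside $Z_0(e_j)$ with $s(e)$ closer to $r(e_j)$ than $r(e)$, for $j=1,2$. The standard criterion for containment of half-trees then gives $Z_0(e) \subseteq Z_0(e_j)$, and combining with the boundary inclusion already in Lemma~\ref{basissets} yields $Z(e) \subseteq Z(e_j)$. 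Iterating pairwise over the finite list produces a single edge $e$ with $y \in Z_B(e)$ and $Z(e) \subseteq \bigcap_i Z(e_i)$.

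Intersecting with $\ov{\partial T}$ will then give $Z(e) \cap \ov{\partial T} \subseteq \bigcap_i Z(e_i) \cap \ov{\partial T} \subseteq U$, and non-emptiness is automatic because $y \in Z_B(e) \subseteq Z(e) \cap \ov{\partial T}$. The main obstacle I anticipate is precisely the bookkeeping for the strengthened form of Lemma~\ref{basissets}: one must verify that after the index shift the constructed edge really lies strictly inside each half-tree $Z_0(e_j)$ with its $r(e)$-side on the side opposite to $r(e_j)$, but this is essentially forced by the cofinality condition once $k, n \geq 1$.
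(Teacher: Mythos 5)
Your argument is correct, but it takes a genuinely different route from the paper's. You upgrade Lemma~\ref{basissets} to a containment of \emph{extended} shadows, $Z(e)\subseteq Z(e_1)\cap Z(e_2)$, by re-entering its construction: after shifting indices so that $k,n\geq 1$, the edge $e$ with $s(e)=r_k$, $r(e)=r_{k+1}$ has both endpoints in $Z_0(e_j)$ and points away from $e_j$, and since the only edge crossing the cut defined by $e$ is $e$ itself, the connected set $Z_0(e)$, which misses $s(e_j)$, must lie inside $Z_0(e_j)$. This half-tree containment criterion is the one piece of tree geometry you must supply that is not literally stated in the paper, but it is elementary and your index-shift normalization does make it go through; iterating pairwise and intersecting with $\ov{\partial T}$ then finishes the proof. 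The paper instead argues topologically: it first establishes the identity $\ov{Z_B(e)}=Z(e)\cap\ov{\partial T}$ (using that $Z(e)$ is clopen, via Lemma~\ref{here-comes-the-funk}), and then, since a finite intersection $\bigcap_i Z(e_i)$ is closed, the boundary containment $Z_B(e)\subseteq\bigcap_i Z(e_i)$ furnished by Lemma~\ref{basissets} upgrades to $Z(e)\cap\ov{\partial T}=\ov{Z_B(e)}\subseteq\bigcap_i Z(e_i)\cap\ov{\partial T}$ simply by taking closures. The closure trick avoids redoing any tree geometry and produces an identity that is reused elsewhere (e.g.\ in Lemma~\ref{dontstabmehere}); your version yields a slightly stronger, purely combinatorial statement about shadows in $T\cup\partial T$ that bypasses closures entirely. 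Both proofs are complete once their respective auxiliary facts are verified.
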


\begin{proof}
Notice first that any edge $e$ in $T$ satisfies $\ov{Z_B(e)}=Z(e)\cap\ov{\partial T}$.
Indeed, for any open neighbourhood $V\subseteq\ov{\partial T}$ of a point $x\in Z(e)$,
then $V\cap Z(e)$ is an open neighbourhood of $x\in\ov{\partial T}$ since $Z(e)$ is clopen in the shadow topology
(see Lemma~\ref{here-comes-the-funk}), so that
\[
V\cap Z_B(e)=V\cap Z(e)\cap\partial T\neq\oo.
\]
For edges $e_1,\ldots,e_n$ in $T$ such that $\bigcap_{i=1}^nZ(e_i)$ intersects $\partial T$,
then as $\bigcap_{i=1}^nZ(e_i)\cap\partial T$ is open in $\partial T$, Lemma~\ref{basissets} yields an edge $e$ such that $Z_B(e)\subseteq\bigcap_{i=1}^nZ(e_i)$.
Hence
\[
Z(e)\cap\ov{\partial T}=\ov{Z_B(e)}\subseteq\bigcap_{i=1}^nZ(e_i)\cap\ov{\partial T}
\]
by what we have seen above. Finally, if $U\subseteq\ov{\partial T}$ is a non-empty open set, then $U\cap\partial T\neq\oo$.
As there exist edges $e_1,\ldots,e_n$ in $T$ such that $\bigcap_{i=1}^nZ(e_i)\cap\ov{\partial T}\subseteq U$ and $\bigcap_{i=1}^nZ(e_i)\cap\partial T\neq\oo$,
we infer that $Z(e)\cap\ov{\partial T}\subseteq U$ for some edge $e$ in $T$.
\end{proof}

A \emph{morphism} $\gamma$ of two trees $T_1=(V_1,E_1)$ and $T_2=(V_2,E_2)$ is a tuple of maps $\gamma_V\colon V_1\to V_2$ and $\gamma_E\colon E_1\to E_2$ such that
\[
s(\gamma_E(e))=\gamma_V(s(e)),\ \gamma_E(\ov{e})=\ov{\gamma_E(e)},\quad e\in E_1.\]
If $\gamma_V$ and $\gamma_E$ are bijections, $\gamma$ is called an \emph{isomorphism}, and if $T_1=T_2$ we say that $\gamma$ is an \emph{automorphism}.
The group of automorphisms of the tree $T$ is denoted by $\operatorname{Aut}(T)$.
Furthermore, any automorphism of $T$ is a surjective isometry with respect to the path metric on $T$,
so that we may easily extend $\sigma\in\operatorname{Aut}(T)$ to $T\cup\partial T$
by defining $\sigma(x)$ for any equivalence class $x$ of a ray $(x_i)_{i\geq 0}$ to be the equivalence class of the ray $(\sigma(x_i))_{i\geq 0}$.

We say that an automorphism $\sigma\in\operatorname{Aut}(T)$ is \emph{elliptic} if it fixes a vertex of $T$, and an \emph{inversion} if it fixes no vertices but does fix an edge (i.e., $\sigma(e)=\ov{e}$ for some $e\in E$).
The fixed point set $T^\sigma$ of an elliptic automorphism $\sigma$ of $T$ is easily seen to be a subtree of $T$, and if $\sigma$ is an inversion of $T$, then $\sigma(e)=\ov{e}$ for a unique edge $e\in E$.
An automorphism $\sigma\in\operatorname{Aut}(T)$ is said to be \emph{hyperbolic} if it is neither elliptic nor an inversion.
We will not make the common assumption here that a given group acts without inversions on a tree as most of our results concern fixed points, and an inversion of a tree $T$ fixes no points in $T\cup\partial T$.

For any automorphism $\sigma\in\operatorname{Aut}(T)$, the amplitude of $\sigma$ is $\ell(\sigma)=\min\{d(v,\sigma(v))\mid v\in V\}$.
The characteristic set of $\sigma$ is the $\sigma$-invariant set
\[
T^\sigma=\{v\in V\mid d(v,\sigma(v))=\ell(\sigma)\}.\]
A bi-infinite path in a graph is a subgraph isomorphic to the graph with vertex set $\Z$ and edge set $\{e_n\mid n\in\Z\}\cup\{\ov{e_n}\mid n\in\Z\}$,
with $s(e_n)=r(\ov{e_n})=n$ and $r(e_n)=s(\ov{e_n})=n+1$ for all $n\in\Z$.
A fundamental result of Tits states that for a hyperbolic automorphism $\sigma\in\operatorname{Aut}(T)$,
the characteristic set $T^\sigma$ is always the vertex set of a bi-infinite path $L$ in $T$,
called the \emph{axis} of $\sigma$, and any non-empty subtree of $T$ which is invariant under $\sigma$ and $\sigma^{-1}$ always contains $L$.
We refer to \cite[Proposition~6.4.24]{serre} for details.
Moreover, $\sigma$ admits exactly two fixed points in $T\cup\partial T$, namely the two points in $\partial T$ arising from the $\sigma$-invariant axis of $\sigma$.
Two hyperbolic automorphisms are said to be \emph{transverse} if they have disjoint fixed point sets.

Henceforth, we assume that the action of a discrete group $\Gamma$ on a tree $T$ is \emph{minimal},
i.e., that $T$ contains no proper $\Gamma$-invariant subtrees, and \emph{strongly hyperbolic} (cf.~\cite{dlhpreaux}),
that is, of \emph{general type} in the sense of \cite[Section~4.3]{boudecmb}, meaning that $\Gamma$ contains two transverse hyperbolic automorphisms of $T$.

\begin{rem}\label{no-leaves-needed}
If $\Gamma$ acts minimally on a tree $T$ and $T$ has at least $3$ vertices, then $T$ has no leaves.
Indeed, the subgraph $T'$ arising from removing all vertices of degree~$1$ from $T$ is a $\Gamma$-invariant subtree.
In fact, if we assume additionally that $\Gamma$ contains at least one hyperbolic automorphism of $T$, then $T$ is the union of all axes of hyperbolic automorphisms of $T$ in $\Gamma$
(combine the proof of \cite[Proposition~3.1]{cullermorgan} with \cite[Proposition~8.1]{alperinbass}).
\end{rem}

\begin{lemma}\label{lem:T-boundary}
For a tree $T$, let $\Gamma\curvearrowright T$ be a minimal, strongly hyperbolic action.
Then the induced action $\Gamma\curvearrowright\ov{\partial T}$ is an extreme boundary action.
In particular, $\ov{\partial T}$ is a $\Gamma$-boundary.
\end{lemma}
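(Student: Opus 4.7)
The plan is to verify Definition \ref{extboundact} directly. The space $\ov{\partial T}$ is compact Hausdorff by the preceding discussion in this section, and it contains at least four points, namely the two fixed points on $\partial T$ of each of two transverse hyperbolic elements in $\Gamma$, which are pairwise distinct by transversality. The $\Gamma$-boundary conclusion will then follow at once from Lemma \ref{eba}.

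Given a proper closed $K \subsetneq \ov{\partial T}$ and a non-empty open $U \subseteq \ov{\partial T}$, the first step is to apply Lemma \ref{contain-basisset} to both $U$ and $\ov{\partial T} \setminus K$, producing edges $e_1, e_2$ in $T$ with $Z(e_1) \cap \ov{\partial T} \subseteq \ov{\partial T} \setminus K$ and $Z(e_2) \cap \ov{\partial T} \subseteq U$; in particular $Z_B(e_1) \cap K = \oo$ and $Z_B(e_2) \subseteq U$. It then suffices to exhibit a hyperbolic element $h \in \Gamma$ whose attracting fixed point lies in $Z_B(e_2)$ and whose repelling fixed point lies in $Z_B(e_1)$. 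Indeed, given such an $h$, the standard north-south dynamics of $h$ on the compact space $T \cup \partial T$ forces the iterates $h^m K$ to converge uniformly to the singleton consisting of the attracting fixed point of $h$ as $m \to \infty$ (using compactness of $K$ and the fact that $K$ misses the repelling fixed point of $h$), so $h^m K \subseteq U$ for all sufficiently large $m$.

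To construct such an $h$, I would invoke Remark \ref{no-leaves-needed} to obtain hyperbolic elements $h_1, h_2 \in \Gamma$ whose axes contain $e_1$ and $e_2$, respectively; after possibly replacing $h_i$ by $h_i^{-1}$, the attracting fixed point of $h_i$ lies in $Z_B(e_i)$. A standard ping-pong argument then shows that for large $n$ the element $h := h_2^n h_1^{-n}$ is hyperbolic, with attracting fixed point arbitrarily close to that of $h_2$ (and therefore eventually falling inside the open set $Z_B(e_2)$) and repelling fixed point arbitrarily close to the attracting fixed point of $h_1$ (and therefore eventually inside $Z_B(e_1)$). The main technical obstacle is this ping-pong step, which requires the axes of $h_1$ and $h_2$ to have four pairwise distinct endpoints on $\partial T$; if the two chosen axes happen to share an endpoint at infinity, one first replaces $h_1$ by a suitable conjugate $gh_1g^{-1}$, using the general-type hypothesis to produce a hyperbolic transverse to $h_1$ so as to perturb the axis off any coincidences before running the ping-pong argument.
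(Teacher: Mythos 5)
Your overall strategy is the standard one, and it is precisely the argument that the paper itself does \emph{not} write out: the paper's proof of Lemma~\ref{lem:T-boundary} is a one-line citation of \cite[Section~4.3]{boudecmb} (plus Lemma~\ref{eba}), so you are supplying from scratch the content that the paper outsources. The reduction of $K$ and $U$ to extended shadows via Lemma~\ref{contain-basisset}, the point count via transversality, and the use of north--south dynamics to push $K$ into $U$ once a suitable hyperbolic element is found are all correct (though note that north--south dynamics on $T\cup\partial T$ in the shadow topology is itself nowhere established in the paper; it does follow in a few lines from the description of the characteristic set quoted in Section~\ref{treesect}, by projecting to the axis, but it deserves at least a sentence).

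The step that fails as written is your patch for the degenerate case. If the axes of $h_1$ and $h_2$ share an endpoint at infinity and you replace $h_1$ by $gh_1g^{-1}$, its attracting fixed point becomes $g\,\xi_+(h_1)$, and nothing forces this to remain in $Z_B(e_1)$; but membership of $\xi_+(h_1)$ in $Z_B(e_1)$ is your \emph{only} certificate that the eventual repelling point of $h=h_2^nh_1^{-n}$ avoids $K$, so the conjugation destroys exactly the property you must preserve. The standard repair is to first upgrade Remark~\ref{no-leaves-needed} to the statement that every half-tree contains the \emph{entire} axis of some hyperbolic element of $\Gamma$ (this is where the general-type hypothesis does real work: given an edge $e$, take $h$ hyperbolic with $\xi_+(h)\in Z_B(e)$ and a hyperbolic $c$ with $\xi_-(h)\notin\mathrm{Fix}(c)$ --- such a $c$ exists because $\Gamma$ contains two transverse hyperbolics --- and conjugate $c$ by a high power of $h$ to push its axis into $Z_0(e)$). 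With that in hand, choose $h_i$ with \emph{both} endpoints in $Z_B(e_i)$. If $Z(e_1)\cap Z(e_2)\cap\ov{\partial T}=\oo$, then $h_1$ and $h_2$ are automatically transverse and your ping-pong applies verbatim; if not, Lemma~\ref{contain-basisset} gives an edge $e_3$ with $Z_B(e_3)\subseteq Z_B(e_1)\cap Z_B(e_2)$, and a single hyperbolic element with axis inside $Z_0(e_3)$ already has $\xi_-\in Z_B(e_1)$ (hence off $K$) and $\xi_+\in Z_B(e_2)\subseteq U$, so no ping-pong is needed at all.
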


\begin{proof}
The action of $\Gamma$ on $\ov{\partial T}$ is an extreme boundary action by \cite[Section~4.3]{boudecmb} % \cite[Proposition~4.26]{boudecmb}
(they also assume faithfulness, but that is not needed in their proof),
and $\ov{\partial T}$ is a $\Gamma$-boundary, by Lemma~\ref{eba}.
\end{proof}

Observe that the fixator subgroup of a half-tree (or shadow) in $T$ coincides with the fixator subgroup of the associated extended half-tree.
Indeed, for any edge $e$ in $T$, $g\in\Gamma$ fixing all vertices in $Z_0(e)$ and any ray in $Z_\infty(e)$, there is $k\geq 0$ such that $gr_i=r_i$ for all $i\geq k$, which implies that $g$ fixes $Z(e)$ pointwise.
In order to determine fixator subgroups of open subsets of the $\Gamma$-boundary $\ov{\partial T}$, we need the following lemma.

\begin{lemma}\label{dontstabmehere}
Let $e$ be an edge in $T$ and suppose that $g\in\Gamma$ fixes $Z_B(e)$ pointwise.
Then $g$ fixes $Z(e)$ pointwise. In particular, the fixator subgroups of $Z_0(e)$, $Z_B(e)$ and $Z(e)\cap\ov{\partial T}$ coincide.
\end{lemma}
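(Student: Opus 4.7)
The statement reduces, via the observation preceding the lemma ($\Gamma_{Z_0(e)}=\Gamma_{Z(e)}$) and the trivial containments, to the single nontrivial assertion that any $g\in\Gamma$ fixing $Z_B(e)$ pointwise must also fix every vertex of $Z_0(e)$. The approach proceeds in three steps: first reduce to the case of $g$ elliptic, then place a large ``convex hull'' inside $T^g$, and finally close off a degree-$2$ chain emanating from $r(e)$ by a finite downward induction.

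First I would invoke the identity $\overline{Z_B(e)}=Z(e)\cap\overline{\partial T}$ established inside the proof of Lemma~\ref{contain-basisset}: continuity of the $\Gamma$-action then automatically yields that $g$ fixes every point of $Z(e)\cap\overline{\partial T}$ pointwise, and in particular every infinite-degree vertex of $Z_0(e)$. Since $Z_B(e)\neq\emptyset$ (extend a ray from $r(e)$ using no-leaves), $g$ is not an inversion. If $g$ were hyperbolic with axis endpoints $\xi_\pm$, then $Z_B(e)\subseteq\{\xi_-,\xi_+\}$; however Lemma~\ref{lem:T-boundary} makes $\overline{\partial T}$ a non-trivial $\Gamma$-boundary, hence perfect, and so the non-empty open set $Z(e)\cap\overline{\partial T}$ is infinite. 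This forces $Z_0(e)\cap(T\setminus F)$ to contain infinitely many vertices fixed by $g$, incompatible with hyperbolicity. Hence $g$ is elliptic and $T^g$ is a non-empty subtree. For distinct $\xi,\eta\in Z_B(e)$, $g$ elliptic fixing both boundary points must pointwise fix the bi-infinite geodesic $L_{\xi\eta}$, so $T^g$ contains the convex hull $C:=\bigcup_{\xi\neq\eta\in Z_B(e)}L_{\xi\eta}$.

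A branch-counting argument shows that $v\in Z_0(e)$ automatically lies in $C$ as soon as at least two of the branches of $T$ at $v$ contain points of $Z_B(e)$, and that this fails only when $v$ and every vertex on the geodesic from $r(e)$ to $v$ have degree $2$ in $T$. The exceptional vertices therefore form a (possibly empty) chain $r(e)=v_0,v_1,\ldots,v_{n-1}$ of degree-$2$ vertices originating at $r(e)$. This chain is necessarily finite: an infinite degree-$2$ chain would reduce $Z_0(e)$ to a half-line with $|Z(e)\cap\overline{\partial T}|=1$, again contradicting perfectness. So the chain terminates at a vertex $v_n$ of degree $\geq 3$, which belongs to $C\subseteq T^g$.

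The main obstacle is closing the gap along this chain, which I plan to carry out by a finite downward induction starting from $v_n$. Since $v_n$ has degree $\geq 3$, a second branch-counting step puts each forward neighbour $u_j$ of $v_n$ into $C$: the back-branch of $u_j$ through $v_n$ reaches the sibling $u_k$-subtrees, each of which contains rays to points of $Z_B(e)$, so $u_j$ has at least two branches meeting $Z_B(e)$. Consequently $g$ fixes every neighbour of $v_n$ except possibly $v_{n-1}$, and since $g$ restricts to a bijection on $N(v_n)$ it must fix $v_{n-1}$ as well. For each subsequent $i=n-2,n-3,\ldots,0$, the vertex $v_{i+1}$ has degree $2$ with only the two neighbours $v_i,v_{i+2}$; $g$ already fixes $v_{i+1}$ and $v_{i+2}$, so bijectivity at $v_{i+1}$ forces $g(v_i)=v_i$. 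This carries us all the way back to $g(r(e))=r(e)$, giving $Z_0(e)\subseteq T^g$ and thereby the lemma.
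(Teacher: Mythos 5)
Your argument is correct, and it reorganizes the proof around a different anchor than the paper does. The paper's proof picks an arbitrary vertex $v$ fixed by the (necessarily elliptic) element $g$ and propagates fixedness \emph{outward} along rays, the key observation being that a ray whose origin is fixed and whose endpoint lies in the fixed set $Z_B(e)$ is fixed pointwise by uniqueness of geodesics; this forces a case analysis on whether $v\in Z_0(e)$ and on the degree of $r(e)$. You instead anchor at the boundary: a non-hyperbolic automorphism fixing both ends of a bi-infinite geodesic fixes it pointwise, so the convex hull of $Z_B(e)$ lies in $T^g$, and your branch-counting correctly identifies the only possible exceptional vertices as the initial degree-$2$ chain at $r(e)$, which you then close off by the local-bijectivity induction from the first branch vertex $v_n$. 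The two proofs converge exactly in that delicate degree-$2$ case --- the paper's case (2) uses the same ``$g$ fixes a vertex and all but one of its neighbours, hence fixes the last one'' step --- but your organization avoids the case split on the location of the fixed vertex and makes the exceptional set explicit, at the cost of the extra (easy) facts that the elliptic $g$ fixes $L_{\xi\eta}$ pointwise and that an infinite degree-$2$ chain would make $Z(e)\cap\ov{\partial T}$ a singleton. Both routes rest on the same two global inputs: perfectness of the non-trivial boundary $\ov{\partial T}$ (to exclude the hyperbolic and degenerate half-line cases) and leaflessness of $T$ from Remark~\ref{no-leaves-needed} (to guarantee that every forward branch actually reaches $Z_B(e)$). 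No gaps.
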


\begin{proof}
Notice that $Z(e)\cap\ov{\partial T}=\ov{Z_B(e)}$ as seen in Lemma~\ref{contain-basisset}. Define $v_0=r(e)$ and $v_1=s(e)$.
Now notice that $g$ is not hyperbolic; if it were, then $g$ would have only two fixed points in all of $\ov{\partial T}$,
implying that $Z(e)\cap\ov{\partial T}$ would be an open set in $\ov{\partial T}$ that contained at most two points.
Hence $\ov{\partial T}$ would have an isolated point, contradicting that $\ov{\partial T}$ is a non-trivial $\Gamma$-boundary.
We conclude that $g$ is elliptic (since $g$ cannot be an inversion), so it fixes at least one vertex, say, $v\in T$. Let $m=d(v,v_0)$. 

Notice first that if $(r_i)_{i\geq 0}$ is a ray in $T$ such that $gr_0=r_0$ and there exists $n\geq 0$ such that $r_k\in Z_0(e)$ for all $k\geq n$, then $g$ fixes each vertex $r_i$.
Indeed, there exist $k,n\geq 0$ such that $r_{i+k}=gr_{i+n}$ for all $i\geq 0$, then $k=d(r_k,r_0)=d(gr_n,gr_0)=d(r_n,r_0)=n$.
By geodesics in $T$ being unique (so that the paths $r_0,\ldots,r_k$ and $gr_0,\ldots,gr_k=r_k$ coincide), we conclude that $gr_i=r_i$ for all $i\geq 0$. 
We now have two cases:
\begin{itemize}
\item[(1)] If $v\notin Z_0(e)$, then as $T$ is leafless by Remark~\ref{no-leaves-needed},
any vertex in $Z_0(e)$ belongs to the image of a ray $(r_i)_{i\geq 0}$ such that $r_0=v$, $r_m=v_0$ and $r_k\in Z_0(e)$ for all $k\geq m$. In particular, $g$ fixes $v_0$.
\item[(2)] If $v\in Z_0(e)$, assume first that $v_0$ has degree at least $3$.
Then there exists a ray $(r_i)_{i\geq 0}$ such that $r_0=v$, $r_m=v_0$ and $r_k\in Z_0(e)$ for all $k\geq m$. From the above argument, $g$ fixes $v_0$. 

If $v_0$ has degree~$2$, then let $x_0=v,x_1,\ldots,x_m=v_0$ be the geodesic from $v$ to $v_0$.
Let $0\leq k\leq m$ be smallest such that $x_j$ has degree~$2$ for all $k\leq j\leq m$. If $k>0$, then let $v'\in T$ be a vertex adjacent to $x_{k-1}$, but $v'\neq x_k$.
As $T$ is leafless, there is a ray $(r_i)_{i\geq 0}$ with image in $Z_0(e)$ such that $r_0=x_0=v$, $r_{k-1}=x_{k-1}$ and $r_{k}=v'$.
The same argument as above implies $gv'=v'$ and $gx_{k-1}=x_{k-1}$. Since $g$ is an automorphism fixing $x_{k-1}$ and all vertices adjacent to $x_{k-1}$ bar $x_k$, it must fix $x_k$ as well.
By $x_k$ having degree~$2$, $x_{k+1}$ is also fixed by $g$ (since $g$ fixes $x_{k-1}$ and $x_k$), and we continue this way until we reach $x_m=v_0$, concluding that $g$ fixes $v_0$.
If $k=0$, then let $v'$ be the vertex adjacent to $v$ that is not $x_1$. By taking a ray in $Z_0(e)$ emanating in $v$ and passing through $v'$, we see that $g$ fixes $v'$, so it fixes $x_1$.
By the same method as above, we conclude that $g$ fixes $v_0$.
\end{itemize}

Now, for any $w\in Z_0(e)$, then there exists a ray $(r_i)_{i\geq 0}$ such that $r_0=v_0$, $r_i\in Z_0(e)$ for all $i\geq 0$, and $r_k=w$ for some $k\geq 0$.
As above, we see that $g$ fixes $w$, so that $g$ fixes $Z_0(e)$ and hence $Z(e)$ pointwise.
\end{proof}

Note that by Remark~\ref{closuresdontmatter},
\[
\ker(\Gamma\curvearrowright T)=\ker(\Gamma\curvearrowright\partial T)=\ker(\Gamma\curvearrowright\ov{\partial T}),
\]
using Lemma~\ref{dontstabmehere} for the first identity, and
\[
\operatorname{int}(\Gamma\curvearrowright\partial T)=\operatorname{int}(\Gamma\curvearrowright\ov{\partial T}).
\]

The following is now an immediate consequence of Lemmas~\ref{open-fixators}, \ref{lem:T-boundary}, and~\ref{dontstabmehere}.
\begin{corollary}\label{half-tree-fixators}
Let $T_1$ and $T_2$ be any two half-trees of $T$, and denote by $K_1$ and $K_2$ the fixator subgroups of $T_1$ and $T_2$, respectively.
Then the normal closures of $K_1$ and $K_2$ coincide, and equal $\operatorname{int}(\Gamma\curvearrowright\partial T)$.
Moreover, $K_1$ is amenable if and only if $K_2$ is amenable.
\end{corollary}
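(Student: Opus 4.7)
The plan is to translate the statement about fixators of half-trees of $T$ into a statement about fixators of basic open sets in the boundary $\ov{\partial T}$, and then apply Lemma~\ref{open-fixators} directly. Each half-tree $T_i$ is of the form $Z_0(e_i)$ for some edge $e_i$ in $T$, and by the observation preceding Lemma~\ref{dontstabmehere} together with Lemma~\ref{dontstabmehere} itself, the fixator subgroup $K_i = \Gamma_{Z_0(e_i)}$ coincides with the fixator of the extended shadow $Z(e_i)\cap\ov{\partial T}$ in $\ov{\partial T}$.

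Next, I would verify that each $U_i = Z(e_i)\cap\ov{\partial T}$ is a non-empty open subset of $\ov{\partial T}$ that is not dense. Openness is clear because $Z(e_i)$ is clopen in the shadow topology on $T\cup\partial T$. For non-emptiness, note that by Remark~\ref{no-leaves-needed} the tree $T$ has no leaves, so starting from $r(e_i)$ one can always extend into $Z_0(e_i)$, producing a ray that gives an element of $Z_B(e_i)\subseteq U_i$. Non-density follows by applying the same observation to the opposite edge $\ov{e_i}$: $U_i$ is disjoint from the non-empty open set $Z(\ov{e_i})\cap\ov{\partial T}$, so its complement in $\ov{\partial T}$ has non-empty interior.

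By Lemma~\ref{lem:T-boundary}, the induced action $\Gamma\curvearrowright\ov{\partial T}$ is an extreme boundary action. Applying Lemma~\ref{open-fixators} to the non-empty, non-dense open sets $U_1, U_2\subseteq\ov{\partial T}$ therefore yields that the normal closures of $\Gamma_{U_1}$ and $\Gamma_{U_2}$ coincide and equal $\operatorname{int}(\Gamma\curvearrowright\ov{\partial T})$, and moreover that $\Gamma_{U_1}$ is amenable if and only if $\Gamma_{U_2}$ is amenable. Since $K_i=\Gamma_{U_i}$ and since $\operatorname{int}(\Gamma\curvearrowright\ov{\partial T})=\operatorname{int}(\Gamma\curvearrowright\partial T)$ by the displayed equality following Lemma~\ref{dontstabmehere}, the corollary follows.

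The only step that requires any real thought is the identification $K_i=\Gamma_{Z(e_i)\cap\ov{\partial T}}$, but this has already been done in Lemma~\ref{dontstabmehere}; after that, everything is a direct invocation of the preceding results, which is why the corollary is described as immediate.
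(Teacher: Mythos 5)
Your proof is correct and follows the same route as the paper, which simply declares the corollary an immediate consequence of Lemmas~\ref{open-fixators}, \ref{lem:T-boundary}, and~\ref{dontstabmehere}; you have merely spelled out the routine verifications (non-emptiness and non-density of the extended shadows, and the identification $\operatorname{int}(\Gamma\curvearrowright\partial T)=\operatorname{int}(\Gamma\curvearrowright\ov{\partial T})$) that the paper leaves implicit.
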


We say that a continuous action of a group $G$ on a topological space $X$ is \emph{strongly faithful} \cite[Lemma~4]{delaharpe1985}
if for all finite subsets $F\subseteq G\setminus\{1\}$ there exists $x\in X$ such that $fx\neq x$ for all $f\in F$.
\begin{proposition}\label{equiv-tree-top-free}
The following are equivalent:
\begin{itemize}
\item[(i)] $\operatorname{int}(\Gamma\curvearrowright\partial T)=\{1\}$;
\item[(ii)] $\Gamma\curvearrowright T$ is strongly faithful;
\item[(iii)] $\Gamma\curvearrowright \partial T$ is strongly faithful;
\item[(iv)] $\Gamma\curvearrowright \partial T$ is topologically free, i.e., $\Gamma\curvearrowright T$ is slender in the sense of \cite{dlhpreaux};
\item[(v)] the fixator subgroups of all half-trees are trivial;
\item[(vi)] the fixator subgroup of some half-tree is trivial.
\end{itemize}
If these conditions hold, then $\Gamma$ is a Powers group, and in particular $C^*$-simple.
\end{proposition}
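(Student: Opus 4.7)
The plan is to close a cycle of implications among (i)--(vi) and then invoke \cite{dlhpreaux} for the Powers group conclusion. First, (v)~$\iff$~(vi) follows from Corollary~\ref{half-tree-fixators}: each half-tree fixator is contained in a conjugate of any other, so triviality of one forces triviality of all. The equivalence (i)~$\iff$~(iv) is by the definition of topological freeness together with the identity $\operatorname{int}(\Gamma\curvearrowright\partial T)=\operatorname{int}(\Gamma\curvearrowright\ov{\partial T})$ recorded in the text.

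Next I would prove (i)~$\iff$~(v) by applying Lemma~\ref{open-fixators} to the extreme boundary action $\Gamma\curvearrowright\ov{\partial T}$ supplied by Lemma~\ref{lem:T-boundary}. For any edge $e$, the set $Z(e)\cap\ov{\partial T}$ is a non-empty, non-dense open subset of $\ov{\partial T}$ (non-empty because leaflessness from Remark~\ref{no-leaves-needed} ensures $Z_B(e)\neq\oo$, and non-dense because the complement contains the non-empty open $Z(\ov{e})\cap\ov{\partial T}$), and by Lemma~\ref{dontstabmehere} its fixator equals the fixator of the half-tree $Z_0(e)$. Lemma~\ref{open-fixators} then identifies $\operatorname{int}(\Gamma\curvearrowright\ov{\partial T})$ with the normal closure of this fixator, so (v) is equivalent to triviality of that interior, i.e., to (i).

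To close the cycle I would establish (iv)~$\Longrightarrow$~(iii)~$\Longrightarrow$~(ii)~$\Longrightarrow$~(v). For (iv)~$\Longrightarrow$~(iii), topological freeness gives that each $\ov{\partial T}^f$ with $f\neq 1$ is closed and nowhere dense in the compact Hausdorff---hence Baire---space $\ov{\partial T}$, so the complement of the finite union $\bigcup_{f\in F}\ov{\partial T}^f$ is open and dense in $\ov{\partial T}$; since $\partial T$ is dense in $\ov{\partial T}$, this complement meets $\partial T$ in the desired non-fixed point. For (iii)~$\Longrightarrow$~(ii), if $x\in\partial T$ is represented by a ray $(r_i)_{i\geq 0}$ with $fx\neq x$ for all $f\in F$, then any $f$ fixing $r_i$ for infinitely many indices would by uniqueness of geodesics fix an entire tail of the ray and hence satisfy $fx=x$; so each $f\in F$ fixes only finitely many vertices of the ray, and a sufficiently deep $r_N$ works. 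For (ii)~$\Longrightarrow$~(v), a non-trivial $g_1\in\Gamma_{Z_0(e)}$ produces, via Corollary~\ref{half-tree-fixators}, a non-trivial $g_2\in\Gamma_{Z_0(\ov{e})}$; since $T=Z_0(e)\sqcup Z_0(\ov{e})$, the pair $\{g_1,g_2\}$ witnesses the failure of strong faithfulness of $\Gamma\curvearrowright T$.

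The Powers group (and in particular $C^*$-simplicity) conclusion then follows from \cite{dlhpreaux}, since condition (iv) is precisely the slender condition whose main theorem there delivers the Powers property. The main subtlety I anticipate is the Baire-category step in (iv)~$\Longrightarrow$~(iii): it must be carried out in the compact space $\ov{\partial T}$ and use density of $\partial T$ inside it, since $\partial T$ itself need not be Baire when $T$ is not locally finite.
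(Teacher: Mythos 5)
Your proposal is correct and follows essentially the same cycle of implications as the paper, resting on the same key inputs (Lemma~\ref{lem:T-boundary}, Lemma~\ref{open-fixators}/Corollary~\ref{half-tree-fixators}, Lemma~\ref{dontstabmehere}, and \cite{dlhpreaux} for the Powers conclusion). The only noteworthy local variations are that your (iii)~$\Longrightarrow$~(ii) argues directly that each $f\in F$ fixes only finitely many vertices of a ray representing the unfixed boundary point (the paper instead observes that the $f$-fixed points form a bounded subtree of an unbounded shadow), and your (iv)~$\Longrightarrow$~(iii) detours through $\ov{\partial T}$ where the paper just notes that a finite intersection of dense open subsets of $\partial T$ is dense, no Baire property needed; both variants are sound.
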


\begin{proof}
The equivalence between (i) and (iv) follows from the definition,
and the equivalence between (i), (v) and (vi) follows from Corollary~\ref{half-tree-fixators}.

(ii) $\Longrightarrow$ (v):
Suppose that $T$ admits a half-tree $T_1$ for which the fixator subgroup $K_1$ is non-trivial.
Let $T_2$ be the component obtained by removing the edge defining $T_1$ from $T$ (and so that $T_1$ and $T_2$ are disjoint), and let $K_2$ be the fixator subgroup of $T_2$. 
Note that $K_1\cap K_2=\ker(\Gamma\curvearrowright T)$.
If $K_1=\ker(\Gamma\curvearrowright T)$, pick $a\in K_1\setminus\{1\}=\ker(\Gamma\curvearrowright T)\setminus\{1\}$, and set $F=\{a\}$. 
Then $a\cdot x=x$ for all $x\in T\cup\partial T$. 
Otherwise, let $a_1\in K_1\setminus\ker(\Gamma\curvearrowright T)$.
Notice that $K_1$ is contained in a conjugate of $K_2$, as seen in the proof of Lemma~\ref{open-fixators}.
Hence $\ker(\Gamma\curvearrowright T)\subsetneq K_1$ implies $\ker(\Gamma\curvearrowright T)\subsetneq K_2$ since $\ker(\Gamma\curvearrowright T)$ is normal.
Let $a_2\in K_2\setminus\ker(\Gamma\curvearrowright T)$, and set $F=\{a_1,a_2\}$.
For every $x\in T\cup\partial T$, then we will either have $a_1\cdot x=x$ or $a_2\cdot x=x$.
In particular, (ii) does not hold.

(iv) $\Longrightarrow$ (iii):
For any finite subset $F\subseteq\Gamma\setminus\{1\}$, $\bigcap_{f\in F}\partial T\setminus(\partial T)^f$ is open and dense in $\partial T$.
Hence there exists $x\in\partial T$ such that $fx\neq x$ for all $f\in F$, so the action of $G$ on $\partial T$ is strongly faithful.

(iii) $\Longrightarrow$ (ii):
Let $F$ be a finite subset of $\Gamma\setminus\{1\}$ and take $x\in\partial T$ such that $fx\neq x$ for all $f\in F$.
By Lemma~\ref{basissets}, there is an edge $e$ in $T$ such that $fZ_B(e)\cap Z_B(e)=\oo$ for all $f\in F$.
For each $f\in F$, the set of $f$-fixed points in $Z_0(e)$ is now a bounded subtree of the unbounded subtree $Z_0(e)$ in $T$. Hence $Z_0(e)$ contains a vertex not fixed by any $f\in F$.

\cite[Corollary~15]{dlhpreaux} ensures the Powers property.
\end{proof}

\begin{theorem}\label{fixator-amenable}
Let $K$ be the a fixator subgroup of a half-tree of $T$ and set $N=\ker(\Gamma\curvearrowright T)$.
The following hold:
\begin{itemize}
\item[(i)] If $\Gamma$ is $C^*$-simple, then $K$ is either trivial or non-amenable;
\item[(ii)] If $K=N$ and $N$ is $C^*$-simple, then $\Gamma$ is $C^*$-simple;
\item[(iii)] If $K/N$ is non-amenable and $N$ is $C^*$-simple, then $\Gamma$ is $C^*$-simple.
\end{itemize}
In particular, if $N$ is trivial, then $\Gamma$ is $C^*$-simple if and only if $K$ is trivial or non-amenable.

Also, if $K$ is amenable, then $\Gamma$ is $C^*$-simple if and only if $K$ is trivial.
\end{theorem}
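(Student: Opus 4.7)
The plan is to translate the statements about the half-tree fixator $K$ into statements about fixator subgroups of non-empty open subsets of $\ov{\partial T}$, and then apply the machinery developed for extreme boundary actions in Section~\ref{boundaries}. By Lemma~\ref{lem:T-boundary}, $\Gamma\curvearrowright\ov{\partial T}$ is an extreme boundary action whose kernel, by the remark following Lemma~\ref{dontstabmehere}, is $N$. The key preparatory step is to set up the following dictionary: by Lemma~\ref{dontstabmehere} the fixator of the half-tree corresponding to an edge $e$ equals $\Gamma_{Z(e)\cap\ov{\partial T}}$, and by Lemma~\ref{contain-basisset} the sets $Z(e)\cap\ov{\partial T}$ form a basis of non-empty open subsets of $\ov{\partial T}$. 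Combining these facts with Corollary~\ref{half-tree-fixators} and Lemma~\ref{lem:am-free}, I obtain that $K$ is amenable (respectively trivial) if and only if $\Gamma_U$ is amenable (respectively trivial) for every non-empty open $U\subseteq\ov{\partial T}$. Establishing this dictionary cleanly is the main point requiring care; everything afterwards reduces to direct invocations of earlier results.

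With the dictionary in hand, (i) follows from Proposition~\ref{prop:am-free}: if $\Gamma$ is $C^*$-simple, the action $\Gamma\curvearrowright\ov{\partial T}$ is either topologically free (forcing $K$ trivial via Proposition~\ref{equiv-tree-top-free}) or not amenably free (forcing $K$ non-amenable via the dictionary).

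For (ii) and (iii), the strategy is to apply Corollary~\ref{cor:extension} to $\Gamma\curvearrowright\ov{\partial T}$. Since $N$ acts trivially on $T$, the quotient $\Gamma/N$ still acts minimally and strongly hyperbolically on $T$, and the fixator of a half-tree in $\Gamma/N$ is $K/N$. If $K=N$, then $K/N$ is trivial, so Proposition~\ref{equiv-tree-top-free} applied to $\Gamma/N$ yields topological freeness of $\Gamma/N\curvearrowright\partial T$, equivalently of $\Gamma/N\curvearrowright\ov{\partial T}$ by the remark after Lemma~\ref{dontstabmehere}, and Corollary~\ref{cor:extension} then gives $C^*$-simplicity of $\Gamma$. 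If instead $K/N$ is non-amenable, the dictionary applied to $\Gamma/N$ shows that $\Gamma/N\curvearrowright\ov{\partial T}$ is not amenably free, whence Corollary~\ref{cor:extension} again produces $C^*$-simplicity.

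The two concluding assertions then drop out. When $N$ is trivial, the trivial group is $C^*$-simple, $K=N$ reduces to $K$ being trivial, and $K/N$ coincides with $K$, so (i)--(iii) combine into the stated equivalence. For the last statement, (i) supplies the forward direction when $K$ is amenable; conversely, if $K$ is trivial then the inclusion $N\subseteq K$ forces $N=\{1\}$, which is $C^*$-simple, and (ii) applies.
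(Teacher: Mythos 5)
Your proposal is correct and follows essentially the same route as the paper: identify $K$ with the fixator $\Gamma_U$ of a non-empty, non-dense open subset $U$ of $\ov{\partial T}$ via Lemma~\ref{dontstabmehere}, deduce (i) from Proposition~\ref{prop:am-free} (together with Lemmas~\ref{lem:top-free} and~\ref{lem:am-free}/Lemma~\ref{open-fixators}), and deduce (ii) and (iii) from Corollary~\ref{cor:extension} using $K/N=\Gamma_U/N=(\Gamma/N)_U$. The ``dictionary'' you spell out is exactly the content of Corollary~\ref{half-tree-fixators} that the paper invokes implicitly, so no further comment is needed.
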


\begin{proof}
By Lemma~\ref{dontstabmehere}, $K$ is the fixator subgroup $\Gamma_U$ of a non-empty and non-dense open subset $U$ of $\ov{\partial T}$.
Then part~(i) follows from Proposition~\ref{prop:am-free}, while part~(ii) and~(iii) follow from Corollary~\ref{cor:extension},
using that $K/N=\Gamma_U/N=(\Gamma/N)_U$.
\end{proof}

\begin{example}\label{regular-tree}
Let $T$ be a regular tree of degree $d\geq 3$, i.e., all vertices in $T$ have degree $d$,
and let $\Gamma$ be the subgroup of $\operatorname{Aut}(T)$ consisting of all automorphisms that are either elliptic or hyperbolic with an even translation length.
The group $\Gamma$ is also considered in \cite[Remarks~(iv), p.~468]{dlhpreaux}, and it is uncountable and has finite index in $\operatorname{Aut}(T)$.
%translation length defined?
%are inversions included in Aut T or not, and how does it affect the index (probably index 4 instead of 2)
We will now show that $\Gamma\curvearrowright\ov{\partial T}$ is an extreme boundary action.
As $T$ is regular, $\ov{\partial T}=\partial T$ if $d<\infty$ and $\ov{\partial T}=T\cup \partial T$ if $d=\infty$.

We start by explaining that for every two edges $e$ and $f$ an element $g \in \Gamma$ can be found, such that  $gZ_0(f) \subseteq Z_0(e)$.
The canonical binary partitioning of $T$ induces an orientation of the edges.
There are two cases: $e$ and $f$ have the same orientation, or $e$ and $f$ have opposite orientation.
In the latter case, take an edge $e_1  \in Z_0(e)$ with $r(e) = s(e_1)$.
Then it is easy to see that $e_1$ has the same orientation as $f$ and, of course, $Z_0(e) \supseteq Z_0(e_1)$.
So it is enough to show that there is a group element $g \in \Gamma$ for which $e = gf$ (in the first case), or $e_1 = gf$ (in the second case).
But if $p$ and $q$ are edges of the same orientation, then either the geodesic connecting them has even number of edges, and $\lvert d(s(p),s(q)) - d(r(p),r(q)) \rvert = 2$
(in this case there is an elliptic element $g$ with $p = gq$),
or the geodesic connecting them has odd number of edges, and $\lvert d(s(p),s(q)) - d(r(p),r(q)) \rvert = 0$ (in this case there is a hyperbolic element $g$ with $p = gq$).
But $\Gamma$ contains all elements that preserve the orientation, so $gZ_0(f)=Z_0(gf)\subseteq Z_0(e)$. 

Let $K\subsetneq \ov{\partial T}$ be a closed set and $\oo\neq U\subseteq \ov{\partial T}$ be an open set.
Since $\ov{\partial T}\setminus K$ is open and non-empty,
then by Lemma~\ref{contain-basisset} there exist edges $e$ and $f$ such that $Z(e) \subseteq U$ and $Z(\ov{f}) \subseteq\ov{\partial T}\setminus K$, that is, $K\subseteq Z(f)$.
The element $g$ constructed above, for which $g Z_0(f) \subseteq Z_0(e)$, 
satisfies $g Z(f) \subseteq Z(e)$ by continuity.
Therefore $g K \subseteq g Z(f) \subseteq Z(e) \subseteq U$.
It follows that the actions of $\Gamma$ and $\operatorname{Aut}(T)$ on $\ov{\partial T}$ are extreme boundary actions.

Since $\Gamma$ acts faithfully on $T$, $\Gamma$ has the unique trace property by Proposition~\ref{interior-kernel}.
Moreover, the action is not topologically free (i.e., not slender), so $\operatorname{int}(\Gamma\curvearrowright\partial T)$ is non-trivial,
and thus the simplicity of $\Gamma$ implies that $\operatorname{int}(\Gamma\curvearrowright\partial T)=\Gamma$.
Note that any half-tree of $T$ is a regular rooted tree,
so the fixator subgroup in $\operatorname{Aut}(T)$ of a half-tree is isomorphic to the full automorphism group of a regular rooted tree of branching degree $d$.
This group contains the full automorphism group of a regular rooted tree of branching degree $3$ (i.e., of a rooted binary tree),
which in turn contains a free subgroup on two generators according to \cite{sidkiwilson}.
Note that all automorphisms of a binary rooted tree preserve the orientation, therefore this group is contained in $\Gamma$.
We can conclude that the fixator subgroup in both $\Gamma$ and $\operatorname{Aut}(T)$ of any half-tree is non-amenable.
Therefore, both $\Gamma$ and $\operatorname{Aut}(T)$ are $C^*$-simple by Theorem~\ref{fixator-amenable}.
\end{example}

\begin{remark}
In \cite[Lemma~5.8]{ivanovomland} by the second and third author it was incorrectly stated that $\partial T$ is always a $\Gamma$-boundary,
which was later used the proof of \cite[Theorem~5.9]{ivanovomland}.
The statement of \cite[Theorem~5.9]{ivanovomland} is still correct;
in fact, a much more general result holds, as shown in Theorem~\ref{fixator-amenable}.
\end{remark}

\section{Graphs of groups and HNN extensions}\label{graphs}

In this section we give some of the preliminaries for the branch of geometric group theory now known as \emph{Bass-Serre theory}.
Much of our exposition is based on the original source \cite[{\S I.5}]{serre} for this topic, but is also inspired by the approach taken in \cite{Bass93} and \cite{dlhpreaux}.

\subsection{Graphs of groups}
A \emph{graph of groups} $(G,Y)$ consists of a non-empty connected graph $Y=(V,E,s,r)$,
families of groups $(G_v)_{v\in V}$ and $(G_e)_{e\in E}$ such that $G_{\ov{e}}=G_e$ for all $e\in E$,
and a family of injective group homomorphisms $\varphi_e\colon G_e\to G_{s(e)}$, $e\in E$.
We pick an \emph{orientation} $E_+\subseteq E$ of $Y$,
meaning that $E_+\cap\{e,\ov{e}\}$ contains only one edge for all $e\in E$, and define $E_-=E\setminus E_+$.

Now, let $M=(V(M),E(M))$ be a maximal subtree of $Y$. We define the \emph{fundamental group} $\Gamma=\pi_1(G,Y,M)$ of $(G,Y)$ by
\[
\Gamma=\left\langle \{G_v\}_{v\in V},\ \{\tau_e\}_{e\in E}\,\middle|\,\begin{array}{cl}\tau_{\ov{e}}=\tau_e^{-1}&\text{for all }e\in E,\\
\tau_e\varphi_{\ov{e}}(g)\tau_e^{-1}=\varphi_e(g)&\text{for all }e\in E,\ g\in G_e\\
\tau_e=1&\text{for all }e\in E(M)\end{array}\right\rangle.
\]
We have natural group homomorphisms $G_y\to\Gamma$ for all $y\in V$, and they are all injective.
Moreover, $\tau_e\in\Gamma$ has infinite order for all $e\in E\setminus E(M)$.

We next define a graph $T=(V(T),E(T),s,r)$ as follows.
For any edge $e\in E$, let $\lvert e\rvert$ be the unique edge in $\{e,\ov{e}\}\cap E^+$ and define $\Gamma_e=\varphi_{\lvert e\rvert}(G_e)$.
We define $V(T)$ and $E(T)$ by means of left coset spaces in the following way:
\[
V(T)=\bigsqcup_{v\in V}\Gamma/G_v,\quad E(T)=\bigsqcup_{e\in E}\Gamma/\Gamma_e.
\]
The source, range and inversion maps are given by
\begin{gather*}
s(g\Gamma_e)=\left\{\begin{array}{cl}gG_{s(e)}&\text{for }e\in E^+\\g\tau_e^{-1}G_{s(e)}&\text{for }e\in E^-,\end{array}\right.\quad
r(g\Gamma_e)=\left\{\begin{array}{cl}g\tau_eG_{r(e)}&\text{for }e\in E^+\\gG_{r(e)}&\text{for }e\in E^-,\end{array}\right. \\
\ov{g\Gamma_e}=g\Gamma_{\ov{e}}
\end{gather*}
for all $g\in\Gamma$ and $e\in E$.

\begin{remark}
Note that in \cite{serre} the set-up is slightly different: there $\varphi_e$ is a map $G_e\to G_{r(e)}$, and the relation is $\tau_e\varphi_e(g)\tau_e^{-1}=\varphi_{\ov{e}}(g)$.
However, above we follow the convention used in \cite{Bass93}, which also coincides with the description in \cite[Example~E.13]{brown-ozawa}.
\end{remark}

\begin{theorem}[{\cite[\S I.5]{serre}}]
For $(G,Y)$, $M$ and $E^+$ as above, the graph $T$ constructed above is a tree. Up to isomorphisms, $\Gamma$ and $T$ are independent of the choice of maximal subtree $M$ and orientation $E^+$.
\end{theorem}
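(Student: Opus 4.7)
The plan is to follow Serre's argument in \cite{serre}, establishing in turn (a) connectedness of $T$, (b) absence of circuits in $T$, and (c) the two independence statements. The construction is set up so that $\Gamma$ acts on $T$ by left multiplication, that this action commutes with $s$, $r$, and inversion, and that the quotient graph $\Gamma\backslash T$ is precisely $Y$ — this is immediate from the definition of $V(T)$ and $E(T)$ as disjoint unions of coset spaces indexed by $V$ and $E$.

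For (a), since $Y$ is connected, connectedness of $T$ reduces to showing that for a fixed vertex $v_0\in V$, every translate $g\,G_{v_0}\in V(T)$ can be joined to $1\cdot G_{v_0}$ by a path. I would write $g$ as a word in the generators $\bigcup_v G_v\cup\{\tau_e\mid e\in E\}$ of $\Gamma$ and argue by induction on word length: a vertex-group generator $h\in G_v$ fixes the coset $G_v$ at which it is based, while each $\tau_e$ produces a single edge of $T$ lying above $e\in E$. Using additionally that the edges of $M$ supply a spanning subgraph of the $\Gamma$-orbit of $\{G_v\}_{v\in V}$ inside $T$, one concatenates these elementary moves into a path from $g\,G_{v_0}$ to $G_{v_0}$.

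For (b), the main input is the normal form theorem for fundamental groups of graphs of groups (the generalised Britton's lemma \cite{serre}). A non-trivial reduced circuit in $T$ would, by reading off the edge cosets traversed, yield a reduced word in the generators of $\Gamma$ of the shape $g_0\tau_{e_1}g_1\cdots\tau_{e_n}g_n$ representing the identity, where the reduction condition (no $g_i$ lying in the appropriate image $\varphi_{e_i}(G_{e_i})$ when $e_{i+1}=\ov{e_i}$) comes precisely from the circuit being geodesic. The normal form theorem forces such a word to be non-trivial, contradiction. This is the main technical obstacle, and the delicate point is just to invoke the normal form correctly; I would cite Serre for it rather than redo the argument.

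For (c), independence of $\Gamma$ on the choice of maximal subtree follows by a Tietze transformation: given two maximal subtrees $M,M'$, any edge $e\in E(M')\setminus E(M)$ can be adjoined to $M$ to create a unique circuit in $Y$ whose remaining edges lie in $M$, which exhibits $\tau_e$ as a product of the stable letters associated to $M$ and vertex-group elements. The resulting substitution is a mutual inverse pair of isomorphisms of the two presentations, and a direct check shows that it induces a $\Gamma$-equivariant isomorphism of the associated trees. Independence of orientation is immediate, since swapping $e$ with $\ov{e}$ replaces $\tau_e$ by $\tau_e^{-1}$ and $\varphi_e$ by $\varphi_{\ov e}$, which is absorbed by the defining relation $\tau_e\varphi_{\ov e}(g)\tau_e^{-1}=\varphi_e(g)$ and merely swaps the two summands $gG_{s(e)}$ and $g\tau_e^{-1}G_{s(e)}$ in the definition of $s(g\Gamma_e)$.
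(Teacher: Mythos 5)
Your proposal is correct and follows essentially the same route as the cited source: the paper does not prove this theorem but refers to Serre \cite{serre}, \S I.5, whose argument is exactly the one you sketch (connectedness via word-length induction using the maximal subtree, absence of circuits via the normal form theorem, and independence of choices via Tietze transformations). No gaps to report.
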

The tree $T$ is the so-called \emph{Bass-Serre tree} of the graph of groups $(G,Y)$; we will also say that $T$ is the Bass-Serre tree of the fundamental group $\Gamma$ of $(G,Y)$. 

The definition of $T$ incites us to define an action of $\Gamma$ on $T$ by left translation of cosets, and $\Gamma$ acts on $T$ by automorphisms without inversions,
i.e., $ge\neq\ov{e}$ for all $g\in\Gamma$ and $e\in E(T)$.

\begin{theorem}[Fundamental theorem of Bass-Serre theory]\label{bass-serre}
Suppose that $\Gamma$ is a group acting without inversions on a tree $T$.
Then there exists a graph of groups with fundamental group $\Gamma'$ isomorphic to $\Gamma$ such that the action of $\Gamma'$ on its Bass-Serre tree $T'$ is isomorphic to the action of $\Gamma$ on $T$.
Moreover, the stabilizer subgroups of vertices (resp.\ edges) of $T$ are isomorphic to the vertex (resp.\ edge) groups of the graph of groups under this isomorphism.
\end{theorem}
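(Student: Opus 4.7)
The plan is to construct the graph of groups $(G,Y)$ directly from the $\Gamma$-action on $T$, to define a natural homomorphism $\Phi\colon\pi_1(G,Y,M)\to\Gamma$, and then to show it is an isomorphism by comparing the associated Bass-Serre tree with $T$.

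First I would form the quotient graph $Y=\Gamma\backslash T$, which is well defined as a graph because $\Gamma$ acts without inversions, choose a maximal subtree $M\subseteq Y$ together with an orientation $E_+$, and lift $M$ to a subtree $\tilde M\subseteq T$ that maps isomorphically onto $M$ under the quotient; such a lift exists by a standard Zorn's-lemma argument, extending the lift one edge at a time. For each $v\in V(Y)$, let $\tilde v\in\tilde M$ denote its unique lift, and for each $e\in E(M)$ let $\tilde e\in\tilde M$ be its unique lift. For $e\in E_+\setminus E(M)$, pick any lift $\tilde e\in E(T)$ with $s(\tilde e)=\widetilde{s(e)}$ and set $\widetilde{\ov{e}}=\ov{\tilde e}$. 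Define $G_v=\Gamma_{\tilde v}$, $G_e=\Gamma_{\tilde e}$, and take $\varphi_e\colon G_e\to G_{s(e)}$ to be the inclusion $\Gamma_{\tilde e}\subseteq\Gamma_{\widetilde{s(e)}}$ for $e\in E_+$. Since $r(\tilde e)$ is in general not $\widetilde{r(e)}$, choose an element $\tau_e\in\Gamma$ with $\tau_e\cdot\widetilde{r(e)}=r(\tilde e)$ for each $e\in E_+\setminus E(M)$, and put $\tau_e=1$ for $e\in E(M)$ with $\tau_{\ov{e}}=\tau_e^{-1}$ throughout. A direct calculation then verifies that these elements of $\Gamma$ satisfy all the defining relations of $\pi_1(G,Y,M)$, giving a well-defined homomorphism $\Phi\colon\Gamma'\to\Gamma$, where $\Gamma'=\pi_1(G,Y,M)$.

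Surjectivity of $\Phi$ is a connectedness argument: for any $g\in\Gamma$, pick a vertex $\tilde v\in\tilde M$ and an edge path $\tilde v=v_0,v_1,\ldots,v_n=g\tilde v$ in $T$; projecting this path to $Y$ and lifting it back one step at a time, using $\tilde M$ for edges in $M$ and the elements $\tau_e\in\Gamma$ for edges outside $M$, one assembles a word in the generators of $\Gamma'$ whose image under $\Phi$ carries $\tilde v$ to $g\tilde v$. Multiplying by a suitable element of $G_v=\Gamma_{\tilde v}$ then yields an honest preimage of $g$.

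For injectivity, I would construct a $\Gamma'$-equivariant graph morphism $\Psi\colon T'\to T$ from the Bass-Serre tree $T'$ of $(G,Y)$ to $T$, with $\Gamma'$ acting on $T$ via $\Phi$, defined on vertices by $\Psi(g'G_v)=\Phi(g')\tilde v$ and analogously on edges; this is well defined because $\Phi(G_v)\subseteq\Gamma_{\tilde v}$. Using the coset descriptions of $V(T')$ and $E(T')$ together with the fact that the $\Gamma$-orbit of $\tilde v$ in $T$ is exactly the preimage of $v\in Y$, one checks that $\Psi$ is bijective on vertices and on edges, hence is a $\Gamma'$-equivariant isomorphism of trees. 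The action of $\Gamma'$ on $T'$ is faithful, since an element fixing every coset $g'G_v$ would lie in $\bigcap_v G_v$, which is trivial by the normal-form theorem for elements of $\pi_1(G,Y,M)$ proved in \cite[\S I.5]{serre}. The pulled-back $\Gamma$-action on $T$ is therefore faithful as a $\Gamma'$-action, forcing $\Phi$ to be injective. The moreover clause on vertex and edge stabilizers is then immediate from $G_v=\Gamma_{\tilde v}$, $G_e=\Gamma_{\tilde e}$, and the equivariance of $\Psi$. The main obstacle throughout is this injectivity step: although $\Psi$ reduces matters to the faithfulness of $\Gamma'\curvearrowright T'$, the latter rests on the nontrivial normal-form theorem for fundamental groups of graphs of groups, which is the technical heart of the whole argument.
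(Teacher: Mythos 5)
The overall architecture of your proposal --- quotient graph $Y=\Gamma\backslash T$, lifted maximal tree $\tilde M$, stabilizers as vertex and edge groups, the comparison homomorphism $\Phi\colon\Gamma'\to\Gamma$, and the equivariant morphism $\Psi\colon T'\to T$ --- is the standard proof of this theorem (which the paper itself does not prove but quotes from Serre), and the construction and surjectivity steps are fine as a sketch. However, your injectivity step contains a genuine error. You assert that $\bigcap_v G_v$ is trivial ``by the normal-form theorem'' and hence that $\Gamma'$ acts faithfully on its Bass--Serre tree $T'$. Neither claim holds in general: for an amalgamated product $A*_CB$ the images of the two vertex groups intersect exactly in the edge group $C$, and for an HNN extension the quotient graph has a single vertex, so $\bigcap_v G_v$ is the entire vertex group $G$. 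Likewise the kernel of $\Gamma'\curvearrowright T'$ is $\bigcap_{g',v}g'G_v(g')^{-1}$, which is nontrivial whenever the original action $\Gamma\curvearrowright T$ is not faithful (e.g.\ $\Gamma=G\times\Z$ acting on a line through the projection to $\Z$); the theorem does not assume faithfulness, so ``the pulled-back action is faithful, forcing $\Phi$ to be injective'' does not go through.

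The repair is easier than what you attempted. Suppose $g'\in\ker\Phi$. Since $\Psi$ is a $\Gamma'$-equivariant bijection and $g'$ acts trivially on $T$ via $\Phi$, it acts trivially on $T'$; in particular it fixes the vertex $1\cdot G_v$, whose stabilizer under left translation is $G_v$ itself, so $g'\in G_v$. But by construction the composite $\Gamma_{\tilde v}=G_v\to\Gamma'\xrightarrow{\ \Phi\ }\Gamma$ is the inclusion of the stabilizer $\Gamma_{\tilde v}$ into $\Gamma$, which is injective; hence $g'=1$. The point is that one only needs $\Phi$ to be injective on a single vertex group together with the fact that $\ker\Phi$ fixes a vertex of $T'$, not global faithfulness of $\Gamma'\curvearrowright T'$. (The normal-form theorem is still doing real work elsewhere --- it is what guarantees that $T'$ is a tree and that $G_v$ injects into $\Gamma'$ --- so your assessment of where the technical weight lies is otherwise reasonable.)
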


\begin{remark}
Let $\mathcal G=(G,Y)$ be a graph of groups, and let $\Gamma$ and $T$ be the corresponding fundamental group and Basse-Serre tree, as described above.
In \cite{bmpst}, inspired by the well-known construction of graph $C^*$-algebras,
and under the assumption that $T$ is locally finite and without leaves,
the authors define a $C^*$-algebra $C^*(\mathcal G)$ by generators and relations that are encoded by $\mathcal G$ in a natural way.
Then they prove a $C^*$-algbraic version of Theorem~\ref{bass-serre},
showing in \cite[Theorem~4.1]{bmpst} that $C^*(\mathcal G)$ is isomorphic to the stabilization of the full crossed product $C(\partial T)\rtimes\Gamma$.
One of their goals is to find conditions ensuring that $C^*(\mathcal G)$ is a UCT Kirchberg algebra,
which in our case, i.e., assuming only that $\Gamma\curvearrowright T$ is minimal and strongly hyperbolic, can be characterized completely for the associated crossed products;
namely, we have that the following are equivalent:
\begin{itemize}
\item[(i)] $C(\ov{\partial T})\rtimes\Gamma$ is a UCT Kirchberg algebra,
\item[(ii)] $C(\ov{\partial T})\rtimes_r\Gamma$ is a UCT Kirchberg algebra,
\item[(iii)] $\Gamma$ and $T$ are countable and $\Gamma\curvearrowright\ov{\partial T}$ is amenable and topologically free.
\end{itemize}

(i)~$\Longrightarrow$~(ii):
Nuclearity implies that the action is amenable, so the full and reduced crossed products are isomorphic.

(ii)~$\Longrightarrow$~(iii):
Separability implies that $\Gamma$ and $T$ are countable, and nuclearity implies that $\Gamma\curvearrowright\ov{\partial T}$ is amenable, and thus amenably free by Remark~\ref{rem:amenably}.
Since $C(\ov{\partial T})\rtimes_r\Gamma$ is simple, then $\Gamma$ is $C^*$-simple by \cite[Theorem~6.2]{kalantarkennedy}, so by Proposition~\ref{prop:am-free}, the action must be topologically free.

(iii)~$\Longrightarrow$~(i):
Amenability implies that the full and reduced crossed products are isomorphic, and they are nuclear, while countability implies that they are separable.
Moreover, if a countable group acts amenably on a commutative $C^*$-algebra, then the crossed product staisfies the UCT property (see \cite{tu}).
Finally, topological freeness also implies that $C(\ov{\partial T})\rtimes_r\Gamma$ is simple and purely infinite by \cite[Theorem~5]{lacaspielberg}.
\end{remark}

\begin{remark}
Let $\Gamma$ be a countable discrete group acting on a countable tree $T$.
Then, following the argument given in \cite[Theorem~5.29]{bmpst}, applying \cite[Theorem~5.2.1 and Lemma~5.2.6]{brown-ozawa}, and using Remark~\ref{rem:amenably},
the following are equivalent:
\begin{itemize}
\item[(i)] $\Gamma_x$ is amenable for all $x\in T$,
\item[(ii)] $\Gamma\curvearrowright T\cup\partial T$ is amenable,
\item[(iii)] $\Gamma_x$ is amenable for all $x\in T\cup\partial T$.
\end{itemize}
If any of these equivalent conditions hold, then $\Gamma\curvearrowright\overline{\partial T}$ is amenable.
\end{remark}

We complete this subsection by discussing a few well-known normal subgroups.
For a group $G$, let $FC(G)$ denote the FC-center of $G$, i.e., normal subgroup of elements in $G$ of finite conjugacy class,
and let $NF(G)$ denote the largest normal subgroup of $G$ that does not contain any non-abelian free subgroup.
Now let $\Gamma$ be a fundamental group of graphs of groups acting on its Bass-Serre tree $T$ and just write $\ker{\Gamma}$ for $\ker(\Gamma\curvearrowright T)$.
As usual, we assume that the action is minimal and strongly hyperbolic.
Then we have the following sequence of subgroups:
\begin{equation}\label{eq:sequence}
FC(\Gamma) \subseteq FC(\ker{\Gamma}) \subseteq R(\ker{\Gamma}) = R(\Gamma) \subseteq NF(\Gamma) = NF(\ker{\Gamma}) \subseteq \ker{\Gamma}.
\end{equation}
The inclusion $R(\Gamma) \subseteq NF(\Gamma)$ is well-known to hold for any group \cite{day1957},
and the two equalities follow from \cite[Examples~6.4, 6.6, and Lemma~6.7]{ivanovomland}.
The last inclusion follows from \cite[Proposition~19]{cornulier}.
Note that this implies that $\Gamma$ always contains a non-abelian free subgroup.
The first containment is clear since $FC(\Gamma)$ is a subgroup of $\ker{\Gamma}$ and the second holds since $FC(\ker{\Gamma})$ is an amenable normal subgroup of $\ker{\Gamma}$.
In general, $FC(\ker{\Gamma})$ is always normal in $\Gamma$ (since an FC-center is a characteristic subgroup), but it may be bigger than $FC(\Gamma)$.

Moreover, if $\ker{\Gamma}$ is finite, then $FC(\Gamma)=\ker{\Gamma}$, and the sequence \eqref{eq:sequence} collapses.
To see why, note that $h\in\ker{\Gamma}=g^{-1}(\ker{\Gamma})g$ for all $g\in\Gamma$, so $ghg^{-1}\in\ker{\Gamma}$ for all $g\in\Gamma$.
Therefore the conjugacy class of $h$ is contained in $\ker{\Gamma}$, which is finite, so $h\in FC(\Gamma)$.

\subsection{HNN extensions}\label{HNNbasics}

We now zoom in on the case of HNN extensions, first from an algebraic point of view,
for which we provide some structure results and define subgroups which we shall interpret geometrically in the next subsection.
Our work should be compared with similar results by the second and third author for free products with amalgamation \cite{ivanovomland}:
whereas HNN extensions are fundamental groups of loops, free amalgamated products are fundamental groups of a segment of length $1$.

Suppose that $(F,Y)$ is a graph of groups, where $Y$ is a loop with one vertex $v$ and one pair of edges $\{e,\ov{e}\}$.
We let $\{e\}$ be the orientation of $Y$ and $G=F_v$.
As the homomorphisms $\varphi_e\colon F_e\to G$ and $\varphi_{\ov{e}}\colon F_e\to G$ of $F_e$ are both injective,
we may define $H=\varphi_e(F_e)$ and an injective group homomorphism $\theta=\varphi_{\ov{e}}\varphi_e^{-1}\colon H\to G$.
Defining the \emph{stable letter} $\tau=\tau_e$, the fundamental group $\Gamma$ is the well-known \emph{HNN extension} $\operatorname{HNN}(G,H,\theta)$:
\[
\Gamma=\operatorname{HNN}(G,H,\theta)=\langle G, \tau\mid\tau^{-1} h\tau=\theta(h)\text{ for all }h\in H\rangle.
\]
These groups are named after Higman, Neumann, and Neumann, who first studied them in \cite{hnn}.

Let $\Gamma=\operatorname{HNN}(G,H,\theta)$ be an HNN extension and let $T$ be the Bass-Serre tree of $\Gamma$.
The vertices in $T$ are left cosets of $\Gamma/G$ and the edge set of $T$ consists of two disjoint copies of $\Gamma/H$, say, $\Gamma/H\sqcup\ov{\Gamma/H}$,
where the inversion map sends $gH$ to $\ov{gH}$ and vice versa. The source and range maps are given by
\[
s(gH)=gG=r(\ov{gH}),\quad  r(gH)=g\tau G=s(\ov{gH}),\quad g\in\Gamma.
\]
The action of $\Gamma$ on $T$ by left translation is transitive. Moreover, $T$ is regular, i.e., all vertices have the same degree $[G:H]+[G:\theta(H)]$,
so that in particular, $T$ is leafless, and $T$ is countable if and only if $G/H$ and $G/\theta(H)$ are of at most countably infinite cardinality.

The picture below illustrates part of the Bass-Serre tree of the HNN extension of a group $G$, and subgroups $H\cong \theta(H)$ such that $[G:H]=2$ and $[G:\theta(H)]=3$.
We let $S_{-1}=\{1,s\}$ and $S_1=\{1,t_1,t_2\}$ be sets of left coset representatives for $H$ and $\theta(H)$, respectively.
Observe that for $g\in S_{-1}$, if we want to add $g\tau$ to the right in one of the vertex cosets $mG$ (e.g., going from $mG$ to $mg\tau G$),
we traverse an edge \emph{emanating} from $mG$, and when adding $g\tau^{-1}$ to the right for $g\in S_1$, one traverses an edge \emph{ending} in $mG$.

\[
\xymatrix{
&\stackrel{\tau s\tau G}{\bullet}&\stackrel{\tau t_2\tau^{-1} G}{\bullet}\ar@{-->}[d]^{\tau t_2\tau^{-1} H}&&\stackrel{s\tau t_2\tau^{-1} G}{\bullet}\ar@{-->}[d]_{s\tau t_2\tau^{-1} H}&\stackrel{s\tau s\tau G}{\bullet}&\\
&\stackrel{\tau^2 G}{\bullet}&\stackrel{\tau G}{\bullet}\ar@{-->}[l]^{\tau H}\ar@{-->}[ul]_{\tau sH}&&\stackrel{s\tau G}{\bullet}\ar@{-->}[r]_{s\tau H}\ar@{-->}[ur]^{s\tau sH}&\stackrel{s\tau^2 G}{\bullet}&\\
&\stackrel{\tau t_1\tau^{-1} G}{\bullet}\ar@{-->}[ur]_{\tau t_1\tau^{-1} H}	&&&&\stackrel{s\tau t_1\tau^{-1} G}{\bullet}\ar@{-->}[ul]^{s\tau t_1\tau^{-1} H}&\\
&&&\stackrel{G}{\bullet} \ar[uul]_{H}\ar[uur]^{sH}&&&\\
&\stackrel{\tau^{-1} G}{\bullet}\ar[urr]^{\tau^{-1} H}&&&&\stackrel{t_2\tau^{-1} G}{\bullet}\ar[ull]^{t_2\tau^{-1} H}&\\
&&&\stackrel{t_1\tau^{-1} G}{\bullet}\ar[uu]^{t_1\tau^{-1} H}&&&}
\]

Let $g\in\Gamma$ be an element with normal form $g=g_1\tau^{\eps_1}g_2\tau^{\eps_2}\cdots g_n\tau^{\eps_n}$.
Defining $t_0=1$ and
\begin{equation}\label{funkydiag}
t_k=\prod_{i=1}^kg_i\tau^{\eps_i},\quad e_k=\left\{\begin{array}{cl} t_k\tau^{-1} H =t_{k-1}g_kH&\text{if }\eps_k=1,\\\ov{t_kH}&\text{if }\eps_k=-1\end{array}\right.
\end{equation}
for $1\leq k\leq n$, the unique path from $G$ to $gG$ is given by 
\[
\xymatrix{G\ar[rr]^{e_1}&&t_1G\ar[rr]^{e_2}&&\cdots\ar[rr]^{e_n}&&gG}
\]
Indeed, if $\eps_k=1$, then $s(t_{k-1}g_kH)=t_{k-1}G$ and $r(t_{k-1}g_kH)=t_{k-1}g_k\tau G=t_kG$, and if $\eps_k=-1$ we notice that $s(\ov{t_kH})=t_k\tau G=t_{k-1}G$ and $r(\ov{t_kH})=t_kG$.

If we let $S_{-1}$ and $S_1$ be systems of representatives for the \emph{left} cosets of $H$ and $\theta(H)$ in $G$, respectively, such that $1\in S_{-1}\cap S_1$,
the \emph{unique normal form} of an element $g\in\Gamma$ (see, e.g., \cite[Theorem~2.14.3]{bogopolski}) can be written
\[
g=g_1\tau^{\eps_1}g_2\tau^{\eps_2}\cdots g_n\tau^{\eps_n}g_{n+1},
\]
where $n\geq 0$, and the following conditions are satisfied:
\begin{itemize}
\item[(i)] $g_{n+1}\in G$ and $\eps_i\in\{\pm1\}$ for $1\leq i\leq n$;
\item[(ii)] $g_i\in S_{-\eps_i}$ for all $1\leq i\leq n$;
\item[(iii)] $g_i=e$ implies $\eps_{i-1}=\eps_i$ for $2\leq i\leq n$.
\end{itemize}
%\begin{itemize}
%\item[(i)] $g_{n+1}\in G$ and (if $n\geq 1$, then) $\eps_i\in\{\pm1\}$ for $1\leq i\leq n$;
%\item[(ii)] (if $n\geq 1$, then) $g_i\in S_{-\eps_i}$ for all $1\leq i\leq n$;
%\item[(iii)] (if $n\geq 2$, then) $g_i=e$ implies $\eps_{i-1}=\eps_i$ for $2\leq i\leq n$.
%\end{itemize}
This also entails that the natural map $G\to\Gamma$ is actually an injection.
With $g$ as above, we say that $n=\lvert g\rvert$ is the \emph{length} of $g$, and if $n\geq 1$, we say that $\eps_1$ and $\eps_n$ is the \emph{type} and \emph{direction} of $g$, respectively.
The \emph{initial letter} of $g$ is $g_1\in S_{-\eps_1}$ and the \emph{end letter} of $g$ is $g_{n+1}\in G$. We define the length of any $g\in G$ to be $0$,
and the initial letter and end letter of $g$ as an element in $\Gamma$ are given by $1$ and $g$, respectively.

\medskip
%\begin{remark}
For $n\geq 1$, $g_1,\ldots,g_{n+1}\in G$ and $\eps_1,\ldots,\eps_n\in\{\pm1\}$, the word
\[
g_1\tau^{\eps_1}\cdots g_n\tau^{\eps_n}g_{n+1}
\]
is said to be \emph{reduced} (in $\Gamma$) if for all $1\leq i\leq n-1$ we have
\begin{itemize}
\item[(a)] $g_{i+1}\notin H$ whenever $\eps_i=-1$ and $\eps_{i+1}=1$, and
\item[(b)] $g_{i+1}\notin\theta(H)$ whenever $\eps_i=1$ and $\eps_{i+1}=-1$.
\end{itemize}
If we define
\[
H_{-1}=H,\quad H_1=\theta(H),
\]
the conditions (a) and (b) can be rephrased as follows: for all $1\leq i\leq n-1$, $g_{i+1}\notin H_{\eps_i}$ whenever $\eps_{i+1}=-\eps_i$.
Notice that
\begin{equation}\label{funnypower}
\tau^{-\eps}H_{-\eps}\tau^{\eps}=H_{\eps}.
\end{equation}
We say that $g_1\in G$ is \emph{reduced} if $g_1\neq 1$.
A fundamental result for HNN extensions, also known as \emph{Britton's lemma}, is that reduced words always define non-identity elements.
The result itself can be derived from the uniqueness of the normal form.
Indeed, if for $n\geq 1$ the word $g=g_1\tau^{\eps_1}\cdots g_n\tau^{\eps_n}g_{n+1}\in\Gamma$ is reduced, let $s_1\in S_{-\eps_1}$ and $h_1\in H_{-\eps_1}$ such that $g_1=s_1h_1$ and rewrite
\[
g=s_1\tau^{\eps_1}(\tau^{-\eps_1}h_1\tau^{\eps_1})g_2\tau^{\eps_2}\cdots g_n\tau^{\eps_n}g_{n+1}.
\]
The remainder of the proof divides into two possible situations, depending on whether consecutive powers of $\tau$ in the word coincide.
Indeed, define $g_2'=(\tau^{-\eps_1}h_1\tau^{\eps_1})g_2$.
If $\eps_2=\eps_1$, then write $g_2'=s_2h_2$ for $s_2\in S_{-\eps_2}$ and $h_2\in H_{-\eps_2}$ as above, and write
\[
g=s_1\tau^{\eps_1}g_2'\tau^{\eps_2}\cdots g_n\tau^{\eps_n}g_{n+1}=s_1\tau^{\eps_1}s_2\tau^{\eps_2}(\tau^{-\eps_2}h_2\tau^{\eps_2})g_3\cdots g_n\tau^{\eps_n}g_{n+1}.
\]
If $\eps_2=-\eps_1$, then $g_2\notin H_{-\eps_2}=H_{\eps_1}$ by assumption, so that due to (\ref{funnypower}), $g_2'=(\tau^{-\eps_1}h_1\tau^{\eps_1})g_2\in H_{\eps_1}g_2$ and $g_2'\notin H_{\eps_1}=H_{-\eps_2}$.
We then proceed as for $g_1$, noting that the coset representative of $g_2'$ with respect to $H_{-\eps_2}$ is not $1$.
Iterating the process yields the normal form of $g$, which contains at least $2n-1$ terms. If $n\geq 2$ then $g\neq 1$ due to uniqueness of the normal form, and if $n=1$,
then $g=g_1\tau^{\eps_1}g_2\neq 1$ would imply $\tau\in G$, a contradiction.
%\end{remark}
\medskip

The above proof of Britton's lemma also proves the following lemma.
\begin{lemma}\label{typeshit}
Let $n\geq 1$ and let $g=g_1\tau^{\eps_1}\cdots g_n\tau^{\eps_n}g_{n+1}\in\Gamma$ be a reduced word.
Then
\begin{itemize}
\item[(i)] $g\notin G$ and $\lvert g\rvert=n$;
\item[(ii)] the type of $g$ is $\eps_1$;
\item[(iii)] the direction of $g$ is $\eps_n$;
\item[(iv)] the initial letter of $g$ is $1$ if and only if $g_1\in H_{-\eps_1}$;
\item[(v)] if $g_{n+1}\in H_{\eps_n}$, the end letter of the normal form is contained in $H_{\eps_n}$ as well.
\end{itemize}
\end{lemma}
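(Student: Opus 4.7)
The plan is to re-run the inductive normalization procedure used in the proof of Britton's lemma and simply read off each of (i)--(v) from the normal form that this procedure produces. Explicitly, write $g_1 = s_1 h_1$ with $s_1 \in S_{-\eps_1}$ and $h_1 \in H_{-\eps_1}$, and rewrite
\[
g = s_1 \tau^{\eps_1} g_2' \tau^{\eps_2} g_3 \tau^{\eps_3}\cdots g_n \tau^{\eps_n} g_{n+1},
\]
where $g_2' = (\tau^{-\eps_1} h_1 \tau^{\eps_1}) g_2 \in H_{\eps_1} g_2$ by (\ref{funnypower}). Iterating, at step $k$ one writes the current $k$-th coefficient as $s_k h_k$ with $s_k \in S_{-\eps_k}$, $h_k \in H_{-\eps_k}$, and absorbs $h_k$ into the next slot via $\tau^{-\eps_k} h_k \tau^{\eps_k} \in H_{\eps_k}$.

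The key bookkeeping check is that the resulting word is in normal form. When $\eps_{k+1} = \eps_k$ there is no restriction on $s_{k+1}$. When $\eps_{k+1} = -\eps_k$, the reducedness hypothesis (condition (a) or (b)) gives $g_{k+1} \notin H_{\eps_k} = H_{-\eps_{k+1}}$; since $g_{k+1}' \in H_{\eps_k}\, g_{k+1}$, we conclude $g_{k+1}' \notin H_{-\eps_{k+1}}$, which forces $s_{k+1} \neq 1$ — precisely condition~(iii) of the normal form. After $n$ iterations one obtains the unique normal form
\[
g = s_1 \tau^{\eps_1} s_2 \tau^{\eps_2} \cdots s_n \tau^{\eps_n} g_{n+1}',\qquad g_{n+1}' = (\tau^{-\eps_n} h_n \tau^{\eps_n})\, g_{n+1}.
\]

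From this, (i) is immediate: the length is $n$, so in particular $g \notin G$. Parts (ii) and (iii) follow because the signs $\eps_1, \ldots, \eps_n$ are untouched by the rewriting. For (iv), the initial letter $s_1$ equals $1$ if and only if $g_1 = h_1 \in H_{-\eps_1}$. For (v), observe that $\tau^{-\eps_n} h_n \tau^{\eps_n} \in H_{\eps_n}$ by (\ref{funnypower}), hence if $g_{n+1} \in H_{\eps_n}$ then $g_{n+1}' \in H_{\eps_n}$ as well.

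No step presents a genuine obstacle; the entire content is notational bookkeeping atop the already-executed Britton's-lemma argument. Indeed the lemma could equally well be phrased as a corollary, since the normal form computed in the proof of Britton's lemma already records all of the claimed data.
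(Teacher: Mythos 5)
Your proof is correct and is essentially the paper's own argument: the paper simply remarks that the iterative rewriting used to prove Britton's lemma already produces the normal form, from which (i)--(v) are read off exactly as you do. Your more explicit bookkeeping (in particular tracking $g_{k+1}'\in H_{\eps_k}g_{k+1}$ to verify condition~(iii) of the normal form, and noting $\tau^{-\eps_n}h_n\tau^{\eps_n}\in H_{\eps_n}$ for part~(v)) is a faithful elaboration of the same route, not a different one.
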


In some cases when working with large subsets of an HNN extension, it will prove helpful to be able to uncover properties of elements in these subsets without having to reduce.
We introduce a simple lemma to remedy this situation, the proof modeled after the proof of the preceding lemma.

\begin{lemma}\label{typeshit2}
For an HNN extension $\Gamma=\operatorname{HNN}(G,H,\theta)$, $n\geq 1$, $g_1,\ldots,g_{n+1}\in G$ and $\eps_1,\ldots,\eps_n\in\{\pm1\}$, define
\[
g=g_1\tau^{\eps_1}\cdots g_n\tau^{\eps_n}g_{n+1}\in\Gamma.
\]
If $n$ is odd, then $g\notin G$, and if $\eps_1=\ldots=\eps_k$ for some $k>\frac{n}{2}$, the type of $g$ is $\eps_1$.
\end{lemma}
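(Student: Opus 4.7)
The plan is twofold: first reduce the question of whether $g\in G$ to an abelianization argument, and then analyze the walk of $g$ on the Bass-Serre tree $T$ to identify the type of $g$.

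Define a homomorphism $\sigma\colon\Gamma\to\Z$ by $\sigma|_G=0$ and $\sigma(\tau)=1$; it is well-defined because each defining relation $\tau^{-1}h\tau=\theta(h)$ abelianizes to $0$, and clearly $G\subseteq\ker\sigma$. For our $g$ one has $\sigma(g)=\eps_1+\cdots+\eps_n$, which has the same parity as $n$ since each $\eps_i=\pm1$. When $n$ is odd, $\sigma(g)$ is odd and hence non-zero, so $g\notin G$.

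For the second assertion, set $t_0=1$, $t_j=g_1\tau^{\eps_1}\cdots g_j\tau^{\eps_j}$ and $v_j=t_jG$, and let $e_j$ be the edge from $v_{j-1}$ to $v_j$ prescribed by~(\ref{funkydiag}); this produces a walk $v_0,v_1,\ldots,v_n=gG$ in $T$. I claim that $v_j\neq v_0$ for all $1\leq j\leq n$. Indeed, $v_j=v_0$ would force $t_j\in G$ and hence $P_j:=\sigma(t_j)=\eps_1+\cdots+\eps_j=0$; but for $1\leq j\leq k$ one has $P_j=j\eps_1\neq 0$, while for $k<j\leq n$ the tail sum $\eps_{k+1}+\cdots+\eps_j$ has absolute value at most $j-k\leq n-k<k$ (using $k>n/2$), so $P_j$ has the same sign as $\eps_1$ and is non-zero.

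Since the walk never returns to $v_0$, deleting the edge $e_1$ from $T$ disconnects it into two components, and $v_1,\ldots,v_n$ all lie in the component not containing $v_0$, for any later edge back into that component would necessarily equal $\ov{e_1}$ and would land at $v_0$. In particular $v_n\neq v_0$, so $g\notin G$, and the unique geodesic from $G$ to $gG$ begins with $e_1$. Writing $g$ in its normal form $h_1\tau^{\delta_1}\cdots h_m\tau^{\delta_m}h_{m+1}$ and applying~(\ref{funkydiag}) to that form, the first edge of the geodesic emanates from $G$ in the $\tau$-direction when $\delta_1=1$ and in the $\tau^{-1}$-direction when $\delta_1=-1$; comparing with the direction of $e_1$, which is that of $\tau^{\eps_1}$, one obtains $\delta_1=\eps_1$, which by Lemma~\ref{typeshit}(ii) is the type of $g$.

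The main technical point will be justifying the transition from the (possibly non-geodesic) walk of $g$ on $T$ to the actual geodesic: it rests on the estimate $|P_j-k\eps_1|<k$ for $k<j\leq n$, which is exactly where the hypothesis $k>n/2$ is used, together with the elementary tree-theoretic fact that a walk never returning to its starting vertex crosses its first edge only once.
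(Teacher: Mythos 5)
Your proof is correct, and it takes a genuinely different route from the one in the paper. The paper argues purely combinatorially on the word: it locates a maximal ``pinch'' $\tau^{\eps_j}g_{j+1}\tau^{\eps_{j+1}}\in G$, collapses it, and iterates until the word is reduced, then invokes Lemma~\ref{typeshit}; the parity claim follows because each collapse removes an even number of powers of $\tau$, and the type claim because each collapse can consume at most $n-k<k$ of the initial block of identical exponents. You replace the parity bookkeeping by the exponent-sum homomorphism $\sigma\colon\Gamma\to\Z$ (well defined since the HNN relation abelianizes to $0$), which is cleaner and standard, and you replace the word-reduction argument for the type by a geometric one on the Bass--Serre tree: the partial sums $P_j$ never vanish when $k>n/2$, so the walk never returns to the base vertex $G$, hence stays in the half-tree across $e_1$ after the first step, and the first edge of the geodesic to $gG$ is $e_1$; since edges of the form $rH$ and $\ov{r\tau^{-1}H}$ lie in the disjoint pieces $\Gamma/H$ and $\ov{\Gamma/H}$ of $E(T)$ according to the sign of the exponent, matching $e_1$ against the first edge produced by the normal form forces $\delta_1=\eps_1$. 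Both proofs use the hypothesis $k>n/2$ in the same arithmetic way ($n-k<k$), but yours externalizes the combinatorics to the tree, at the price of relying on $T$ being a tree (available at this point in the paper) rather than only on Britton's lemma; as a small bonus, your walk argument re-proves $g\notin G$ under the second hypothesis. The only presentational point worth flagging is that~(\ref{funkydiag}) is stated in the paper for a normal form, whereas you apply its formulas to an arbitrary word; the source/range computations go through verbatim and yield a (possibly backtracking) edge path, which is all you need, but this extension deserves a sentence of justification.
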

\begin{proof}
Notice that the above expression of $g$ is reduced if and only if $\tau^{\eps_j}g_{j+1}\tau^{\eps_{j+1}}\notin G$ for all $1\leq j\leq n-1$.
We may therefore assume that there is $1\leq j\leq n-1$ such that $\tau^{\eps_j}g_{j+1}\tau^{\eps_{j+1}}\in G$. Let $1\leq i\leq\min\{j,n-j\}$ be largest such that
\[
h=g_{j-(i-1)}\tau^{\eps_{j-(i-1)}}g_{j-(i-1)+1}\cdots g_{j+i}\tau^{\eps_{j+i}}g_{j+i+1}\in G.
\]
We may now write $g=g_1\tau^{\eps_1}\cdots g_{j-i}\tau^{\eps_{j-i}}h\tau^{\eps_{j+i+1}}\cdots g_n\tau^{\eps_n}g_{n+1}$.
If this word is not reduced, we can continue this process for this new expression of $g$.
After a finite number of iterations, the word must be reduced,
so because this process always removes an even number of powers of $\tau$ from the preceding expression of $g$, $n$ being odd implies that $g\notin G$ by Lemma~\ref{typeshit}~(i).
If $\eps_1=\ldots=\eps_k$ for some $k>\frac{n}{2}$, then reduction removes at most $n-k<k$ of the identical first $k$ powers of $\tau$ in the original expression of $g$.
Therefore the type of $g$ is $\eps_1$ by Lemma~\ref{typeshit}~(ii).
\end{proof}

The \emph{kernel} of the HNN extension $\Gamma$ is the normal subgroup
\[
\ker{\Gamma}=\bigcap_{r\in\Gamma}rHr^{-1}.
\]
For $\eps\in\{\pm1\}$, let $T_\eps$ be the collection of elements $g\in\Gamma$ of length $n\geq 1$ and type $\eps$.
Let $T_\eps^\dagger$ be the subset of $g\in T_\eps$ of length $n\geq 1$ with initial letter $1$.
We then define the \emph{quasi-kernel}
\[
K_\eps=\bigcap_{r\in\Gamma\setminus T_\eps^\dagger}rHr^{-1}.
\]
Notice that $(\Gamma\setminus T_{-1}^\dagger)\cup(\Gamma\setminus T_1^\dagger)=\Gamma$, so that $\ker{\Gamma}=K_{-1}\cap K_1$.

\medskip

We will consider criteria for HNN extensions to be $C^*$-simple and to have the unique trace property in the following.
An HNN extension $\Gamma=\operatorname{HNN}(G,H,\theta)$ is \emph{ascending} if either $H=G$ or $\theta(H)=G$.
In order to make the most of Britton's lemma, we will mostly consider \emph{non-ascending} HNN extensions, which is not too restrictive of a property,
meaning that both $S_1$ and $S_{-1}$ are non-trivial.

%\begin{definition}
%An HNN extension $\Gamma=\operatorname{HNN}(G,H,\theta)$ is \emph{ascending} if either $H=G$ or $\theta(H)=G$.
%\end{definition}

\begin{lemma}\label{wordlemma}
If $\Gamma$ is a non-ascending HNN extension, the normal closures of the quasi-kernels in $\Gamma$ coincide.
\end{lemma}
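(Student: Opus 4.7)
The plan is to identify each of the quasi-kernels $K_1$ and $K_{-1}$ with the fixator subgroup of a half-tree of the Bass-Serre tree $T$, whereupon Corollary~\ref{half-tree-fixators} immediately yields the desired equality of normal closures. Recall that the cosets $rH \in \Gamma/H$ parametrize the edges of $T$, with each $rH$ corresponding to an edge of source $rG$, range $r\tau G$, and stabilizer $rHr^{-1}$. The key geometric observation, which follows from the explicit description of the unique path from $G$ to $rG$ given earlier in the section, is that $r \in T_1^\dagger$ precisely when this path begins with the edge $H$; equivalently, $rG$ lies in the half-tree $T_H^+$ consisting of the connected component of $T$ containing $\tau G$ after $H$ is removed. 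Writing $T_H^-$ for the opposite half-tree, it follows that the cosets $\{rH : r \notin T_1^\dagger\}$ in the definition of $K_1$ correspond exactly to the edges of $T_H^-$ together with the straddling edge $H$ itself, so that $K_1 = F^- \cap H$, where $F^-$ denotes the fixator of $T_H^-$.

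The next step will be to show that this extra intersection with $H$ is automatic, namely that $F^- \subseteq H$, so that $K_1 = F^-$ is itself a half-tree fixator. Since $F^-$ fixes the vertex $G$, it lies in $G$ and therefore acts by permutation on the edges of $T$ emanating from $G$; of these edges, all but $H$ lie inside $T_H^-$ with both endpoints fixed by $F^-$, so the induced permutation at $G$ fixes every edge except possibly $H$. But a permutation of a finite set fixing all but one element must fix that element too, so $F^- \subseteq \operatorname{Stab}(H) = H$. The non-ascending hypothesis in particular guarantees that $G$ has degree $[G{:}H] + [G{:}\theta(H)] \geq 4$ in $T$, ensuring that this permutation argument is non-degenerate.

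An entirely symmetric analysis identifies $K_{-1}$ with the fixator of the half-tree $T_{\tau^{-1}H}^+$ on the $G$-side of the edge $\tau^{-1}H$. In this second case the straddling edge $\tau^{-1}H$ has representatives $r \in \tau^{-1}H$ all lying in $T_{-1}^\dagger$, so it is automatically excluded from the intersection defining $K_{-1}$ and no additional intersection with a stabilizer is required. Applying Corollary~\ref{half-tree-fixators} to the two half-trees $T_H^-$ and $T_{\tau^{-1}H}^+$ then yields $\llangle K_1 \rrangle = \llangle K_{-1} \rrangle$. The main obstacle I anticipate is the bookkeeping in the geometric identifications of the two $K_\varepsilon$, particularly the reconciliation of the apparent asymmetry between inclusion of the straddling edge in the intersection for $K_1$ and its exclusion in the intersection for $K_{-1}$; this is precisely what the permutation-of-edges argument at the vertex $G$ resolves by absorbing the extra stabilizer constraint in $K_1$ into the half-tree fixator description.
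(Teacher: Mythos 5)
Your proof is correct, but it takes a genuinely different, geometric route; the paper's own argument is a two-line algebraic one. Namely, for any $s\in S_{-1}\setminus\{1\}$ (such an $s$ exists precisely because $\Gamma$ is non-ascending), left translation by $s\tau$ maps $\Gamma\setminus T_{-1}^\dagger$ into $\Gamma\setminus T_1^\dagger$, so directly from the defining intersections $K_1\subseteq s\tau K_{-1}\tau^{-1}s^{-1}$, and symmetrically $K_{-1}$ lies in a conjugate of $K_1$; hence the normal closures coincide. What you do instead is identify $K_{\pm1}$ with fixators of half-trees of the Bass--Serre tree and invoke Corollary~\ref{half-tree-fixators}. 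Your identifications are correct -- they agree with what the paper only establishes later, in Lemma~\ref{stabmerighthere} and the paragraph following it, where $K_1=L(\tau)$ and $K_{-1}=L(\tau^{-1})$ -- and there is no circularity, since Corollary~\ref{half-tree-fixators} is proved in Section~\ref{treesect} independently of the HNN material; one only needs that the Bass--Serre action of a non-ascending HNN extension is minimal and strongly hyperbolic, which holds since the tree is regular of degree at least $4$. The trade-off is that your route is much heavier: it imports the boundary machinery and obliges you to prove a special case of Lemma~\ref{stabmerighthere} up front, including the absorption step $F^-\subseteq H$ and the verification (via Britton's lemma) that for $r\notin T_\eps^\dagger$ both endpoints $rG$ and $r\tau G$ of the edge $rH$ stay in the correct half-tree; in exchange it immediately yields the extra information that the common normal closure equals $\operatorname{int}(\Gamma\curvearrowright\partial T)$, which the paper only extracts later in Proposition~\ref{normalsubs}. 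One small correction: the set of edges at the vertex $G$ need not be finite (the indices $[G:H]$ and $[G:\theta(H)]$ may be infinite), but this costs nothing, since a bijection of an arbitrary set fixing all but one element must fix that element as well.
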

\begin{proof}
For any $s\in S_{-1}\setminus\{1\}$, $s\tau(\Gamma\setminus T_{-1}^\dagger)\subseteq\Gamma\setminus T_1^\dagger$, so that
\[
K_1=\bigcap_{r\in\Gamma\setminus T_1^\dagger}rHr^{-1}\subseteq s\tau\left(\bigcap_{r\in\Gamma\setminus T_{-1}^\dagger}rHr^{-1}\right)\tau^{-1}s^{-1}=s\tau K_{-1}\tau^{-1}s^{-1}.
\]
Hence the normal closure of $K_{-1}$ contains that of $K_1$. The reverse inclusion is seen in a similar manner.
\end{proof}

As the normal closures of $K_{1}$ and $K_{-1}$ in $\Gamma$ coincide by the above lemma, it follows that $K_{1}$ is trivial if and only if $K_{-1}$ is trivial.
Moreover, if one of $K_{1}$ and $K_{-1}$ is equal to the normal subgroup $\ker{\Gamma}$, then the other one is also equal to $\ker{\Gamma}$.

For a non-ascending HNN extension $\Gamma$, let $\operatorname{int}{\Gamma}$ be the normal closure of either of the quasi-kernels.
We call $\operatorname{int}{\Gamma}$ the \emph{interior} of $\Gamma$.

%\begin{definition}
%For a non-ascending HNN extension $\Gamma$, let $\operatorname{int}{\Gamma}$ be the normal closure of either of the quasi-kernels.
%We call $\operatorname{int}{\Gamma}$ the \emph{interior} of $\Gamma$.
%\end{definition}

\begin{remark}
Let $\Gamma=\operatorname{HNN}(G,H,\theta)$ be a non-ascending HNN extension.
Then $\operatorname{int}{\Gamma}=\ker{\Gamma}$ if and only if the quasi-kernels of $\Gamma$ coincide with the kernel of $\Gamma$,
if and only if there exists $g_{\eps}\in G\setminus H_{-\eps}$ such that $g_{\eps}K_{\eps}g_{\eps}^{-1}=K_{\eps}$ for $\eps\in\{\pm1\}$.
To see this, note first that $\eps\in\{\pm1\}$ and all $g_{\eps}\in G\setminus H_{-\eps}$ we have
\[
\Gamma\setminus T_{-\eps}^\dagger\subseteq (\Gamma\setminus T_{\eps}^\dagger)\cup g_{\eps}(\Gamma\setminus T_{\eps}^\dagger).
\]
Indeed, if $x\in T^{\dagger}_\eps\subseteq\Gamma\setminus T^{\dagger}_{-\eps}$, then $g_{\eps}^{-1}x$ has type $\eps$ and initial letter different from $1$.
Therefore $\Gamma=(\Gamma\setminus T_{\eps}^\dagger)\cup g_{\eps}(\Gamma\setminus T_{\eps}^\dagger)$, so that
\[
\ker{\Gamma}=\bigcap_{r\in\Gamma\setminus T_{\eps}^\dagger}rHr^{-1}\cap\bigcap_{r\in\Gamma\setminus T_{\eps}^\dagger}g_{\eps}rHr^{-1}g_{\eps}^{-1}=K_{\eps}\cap g_{\eps}K_{\eps}g_{\eps}^{-1}.
\]
The conclusions now follow from normality.
\end{remark}

The following theorem (and the main result of this section) is motivated by condition (SF') in \cite[Proposition~11]{delaharpe1985} and \cite[Theorem~3.2]{ivanovomland}.

\begin{theorem}\label{hnnpowers}
Let $\Gamma=\operatorname{HNN}(G,H,\theta)$ be a non-ascending HNN extension. The following are equivalent:
\begin{itemize}
\item[(i)] $\operatorname{int}{\Gamma}=\{1\}$;
\item[(ii)] the quasi-kernel $K_\eps$ is trivial for some or both $\eps$;
\item[(iii)] for every finite subset $F\subseteq \Gamma\setminus\{1\}$, there exists $g\in \Gamma$ such that $gFg^{-1}\cap G=\oo$;
\item[(iv)] for every finite subset $F\subseteq \Gamma\setminus\{1\}$, there exists $g\in \Gamma$ such that $gFg^{-1}\cap H=\oo$;
\item[(v)] for every finite subset $F\subseteq G\setminus\{1\}$, there exists $g\in \Gamma$ such that $gFg^{-1}\cap H=\oo$;
\item[(vi)] for every finite subset $F\subseteq H\setminus\{1\}$, there exists $g\in \Gamma$ such that $gFg^{-1}\cap H=\oo$.
\end{itemize}
Moreover, if these conditions hold, then $\Gamma$ is a Powers group, and in particular $C^*$-simple.
\end{theorem}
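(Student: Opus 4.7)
The plan is to reduce everything to Proposition~\ref{equiv-tree-top-free} by identifying the quasi-kernels $K_{\pm 1}$ with fixator subgroups of half-trees of the Bass-Serre tree $T$. Concretely, using the normal-form description in~\eqref{funkydiag} of the geodesic from $G$ to $rG$, I would show that $rG$ lies in the half-tree obtained by removing the edge $H$ and containing $G$ precisely when $r\in\Gamma\setminus T_1^\dagger$, since the first edge of the geodesic is $H$ iff the initial coset representative $g_1$ equals $1$ and $\eps_1=1$, iff $r\in T_1^\dagger$. Combined with a short permutation argument at the vertex $G$ (an element fixing the half-tree permutes the finitely many edges emanating from $G$ and, being the identity on all of them except possibly $H$, is forced to stabilise $H$ too, hence to lie in $H$), this shows that $K_1=\bigcap_{r\in\Gamma\setminus T_1^\dagger}rHr^{-1}$ is exactly the fixator subgroup of the $G$-half-tree, and symmetrically for $K_{-1}$. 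Corollary~\ref{half-tree-fixators} then identifies the algebraically defined $\operatorname{int}\Gamma$ with the geometric quantity $\operatorname{int}(\Gamma\curvearrowright\partial T)$.

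Given this, (i)~$\Longleftrightarrow$~(ii) is immediate from the definition of $\operatorname{int}\Gamma$, Lemma~\ref{wordlemma}, and the observation (already recorded in the paper) that $K_1$ is trivial iff $K_{-1}$ is. The equivalence (i)~$\Longleftrightarrow$~(iii) is a direct translation of Proposition~\ref{equiv-tree-top-free}: strong faithfulness of $\Gamma\curvearrowright T$ reads, after the substitution $g=r^{-1}$ and using that the stabiliser of the vertex $rG$ equals $rGr^{-1}$, exactly as condition~(iii). The chain (iii)~$\Longrightarrow$~(iv)~$\Longrightarrow$~(v)~$\Longrightarrow$~(vi) is trivial from the inclusions $H\subseteq G\subseteq\Gamma$.

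The main obstacle is closing the loop via (vi)~$\Longrightarrow$~(ii), which I would prove by contraposition. Suppose some quasi-kernel is non-trivial; then both are by Lemma~\ref{wordlemma}, so I would pick non-trivial $h_\eps\in K_\eps\subseteq H$ for $\eps\in\{\pm 1\}$ and take $F=\{h_1,h_{-1}\}\subseteq H\setminus\{1\}$. Since $(\Gamma\setminus T_1^\dagger)\cup(\Gamma\setminus T_{-1}^\dagger)=\Gamma$, every $g\in\Gamma$ satisfies $g^{-1}\in\Gamma\setminus T_\eps^\dagger$ for at least one $\eps$, whence $gh_\eps g^{-1}\in gK_\eps g^{-1}\subseteq H$ and therefore $gFg^{-1}\cap H\neq\oo$, contradicting~(vi). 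The Powers property (and hence $C^*$-simplicity) is then inherited from the last sentence of Proposition~\ref{equiv-tree-top-free}.
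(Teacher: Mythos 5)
Your proposal is correct, but it closes the cycle of implications by a genuinely different route from the paper's. The parts you share with the paper are (i)$\Leftrightarrow$(ii), the trivial chain (iii)$\Rightarrow$(iv)$\Rightarrow$(v)$\Rightarrow$(vi), and the contrapositive proof of (vi)$\Rightarrow$(ii) with $F=\{f_{-1},f_1\}$, which is essentially verbatim the paper's argument. Where you diverge is the remaining implication: the paper proves (ii)$\Rightarrow$(iii) by a purely combinatorial induction, conjugating the finite set $F$ step by step by elements $g_i\in\Gamma\setminus T_{\eps}^{\dagger}$ and using Britton's lemma via Lemma~\ref{typeshit} to control types, directions and initial/end letters so that no element of $g^{-1}Fg$ falls back into $G$. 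You instead prove (i)$\Leftrightarrow$(iii) geometrically, by identifying $K_{\pm1}$ with fixator subgroups of the two half-trees at an edge of the Bass--Serre tree and then invoking Proposition~\ref{equiv-tree-top-free}, whose ``strongly faithful'' condition is exactly (iii) once vertex stabilisers are rewritten as conjugates of $G$. That identification is precisely (a special case of) Lemma~\ref{stabmerighthere} and Proposition~\ref{normalsubs}, which the paper only establishes in the later subsection on boundary actions; your sketch of it is sound --- the geodesic description~\eqref{funkydiag} shows $rG$ lies on the $G$-side iff $r\notin T_1^{\dagger}$, and the permutation argument at the vertex $G$ upgrades ``fixes all vertices of the half-tree'' to ``stabilises every edge $rH$ with $r\in\Gamma\setminus T_1^{\dagger}$'', which is what membership in $K_1$ requires. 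Two small repairs: the tree need not be locally finite, so drop ``finitely many'' (an automorphism fixing a vertex and all but one of its neighbours fixes the last one with no finiteness needed); and you should record that the standing hypotheses of Proposition~\ref{equiv-tree-top-free} (minimality and general type) hold for a non-ascending HNN extension, which is the content of the proposition opening the subsection on boundary actions of HNN extensions and is not circular with the present theorem. As for what each approach buys: the paper's keeps Theorem~\ref{hnnpowers} self-contained and purely algebraic, so that the tree dictionary can be developed afterwards; yours avoids the delicate normal-form induction entirely at the price of front-loading that dictionary.
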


\begin{proof}
(iii) $\Longrightarrow$ (iv) $\Longrightarrow$ (v) $\Longrightarrow$ (vi) and (i) $\Longleftrightarrow$ (ii) are obvious.

(vi) $\Longrightarrow$ (ii): If $K_\eps$ is trivial for some $\eps\in\{\pm1\}$, then $K_{-\eps}$ is trivial by Lemma~\ref{wordlemma}.
Therefore suppose that $K_{-1}$ and $K_1$ are both non-trivial.
Pick $f_\eps\in K_\eps\setminus\{1\}$ for $\eps\in\{\pm1\}$ and set $F=\{f_{-1},f_1\}$.
For an arbitrary $g\in \Gamma$, we have $g^{-1}\in \Gamma\setminus T_\eps^\dagger$ for some $\eps$.
Then $gf_\eps g^{-1}\in H$, i.e., $gf_\eps g^{-1}\in gFg^{-1} \cap H$.

%First, if $\ker \Gamma=K_{-1}\cap K_1$ is non-trivial, we pick $f\in\ker \Gamma\setminus\{1\}$.
%Set $F=\{f\}$.
%Clearly $F\subseteq H$ since $\ker \Gamma\subseteq H$.
%Then $gfg^{-1}\in \ker \Gamma\subseteq H$ for all $g\in \Gamma$.

%If $\ker \Gamma$ is trivial, pick $f_\eps\in K_\eps\setminus\{1\}$ for $\eps\in\{\pm1\}$ and set $F=\{f_{-1},f_1\}$.
%For an arbitrary $g\in \Gamma$, then $g\in \Gamma\setminus T_\eps^\dagger$ for some $\eps$.
%Then $gf_\eps g^{-1}\in H$, i.e., $gf_\eps g^{-1}\in gFg^{-1} \cap H$.

(ii) $\Longrightarrow$ (iii): Choose a finite $F\subseteq \Gamma\setminus\{1\}$.
Assume first there is an element $f_1\in F \cap G$ (otherwise, there is nothing to show).
Since $f_1\neq 1$, we may pick $g_1\in \Gamma\setminus T_1^\dagger$ such that $g_1^{-1}f_1g_1\notin H$.
We may now assume that $g_1^{-1}f_1g_1\notin G$; if $g_1^{-1}f_1g_1\in G$, we can freely replace $g_1$ by $g_1\tau$.
In particular, $g_1\notin G$, and so we may let $\eps_1$ be the direction of $g_1$.
We then see that the type and direction of $g_1^{-1}f_1g_1$ are $-\eps_1$ and $\eps_1$, respectively,
since we can write $g_1^{-1}f_1g_1$ as a reduced word by means of the normal form of $g_1$ and then apply Lemma~\ref{typeshit}.
In this way, we also see that replacing $g_1$ by $g_1h^{-1}$ where $h$ is the end letter of $g_1$ does not change this conclusion,
so we may assume that $g_1^{-1}f_1g_1$ has initial letter $1$ and end letter contained in $H_{\eps_1}$.
% (note that we may always take $g_1$ with length $\geq 1$ so that $g_1^{-1}f_1g_1\notin G$).

%Now, set $F_1=\{g_1^{-1}fg_1\mid f\in F\}$.
We now assume that there is an element $f_2\in F$ such that $g_1^{-1}f_2g_1\in G$ (otherwise, we are done).
Pick $g_2\in \Gamma\setminus T_{-\eps_1}^\dagger$ such that $g_2^{-1}g_1^{-1}f_2g_1g_2\notin H$.
In the same manner as above, we may assume that $g_2^{-1}g_1^{-1}f_2g_1g_2\notin G$, $g_2\notin G$ and $g_2$ has end letter $1$, and if $\eps_2$ is the direction of $g_2$,
then $g_2^{-1}g_1^{-1}f_2g_1g_2$ has type $-\eps_2$ and direction $\eps_2$.
%where $\eps$ depends on the direction of $g_1$, so that $\lvert g_1g_2\rvert=\lvert g_1\rvert+\lvert g_2\rvert$, i.e., no cancellation, and 
We now claim that $g_2^{-1}g_1^{-1}f_1g_1g_2\notin G$ as well. Indeed, since $g_1^{-1}f_1g_1\notin G$ has type $-\eps_1$ and direction $\eps_1$ with initial letter $1$ and end letter in $H_{\eps_1}$,
then $g_2^{-1}g_1^{-1}f_1g_1g_2$ has type $-\eps_2$ and direction $\eps_2$, with initial letter $1$ and end letter contained in $H_{\eps_2}$.
To realize this, we consider the normal forms of $g_1^{-1}f_1g_1$ and of $g_2$, say, $g_2=h_1\tau^{\eps}\cdots h_m\tau^{\eps_2}$.
Then
\[
g_2^{-1}(g_1^{-1}f_1g_1)g_2=\tau^{-\eps_2} h_m^{-1}\cdots\tau^{-\eps}h_1^{-1}(g_1^{-1}f_1g_1)h_1\tau^{\eps}\cdots h_m\tau^{\eps_2}.
\]
Hence reduction is only possible if $\eps=-\eps_1$, but $h_1\notin H_{-\eps}=H_{\eps_1}$ by assumption since $g_2\in\Gamma\setminus T_{-\eps_1}^\dagger$.
Therefore the above word is always reduced, so Lemma~\ref{typeshit} applies.

It should be clear how this process continues, choosing $g_i$ from the set $\Gamma\setminus T_\eps^\dagger$, depending on the direction of $g_{i-1}$,
and since $F$ is finite, we take $g$ to be the product of the $g_i$'s,
and then $g^{-1}fg\notin G$ for every $f\in F$.

We refer to \cite[Proposition~11]{delaharpe1985} for a proof that any of the six conditions implies that $\Gamma$ is a Powers group, if $\Gamma$ is non-ascending.
\end{proof}

\begin{remark}\label{delaharpecrit}
In the 2011 article \cite[Theorem~3~(ii)]{dlhpreaux}, a sufficient criterion to ensure $C^*$-simplicity of a non-ascending, countable HNN extension was given by de~la~Harpe and Pr\'{e}aux, formulated as follows.
For $\Gamma=\operatorname{HNN}(G,H,\theta)$ non-ascending and $G$ countable, define $H_0=H$,
and recursively define a descending sequence of subgroups $(H_k)_{k\geq 1}$ of $H_0=H$ by $H_k'=H_k\cap\tau^{-1}H_k\tau$ and
\[
H_k=\left(\bigcap_{g\in G}gH_k'g^{-1}\right)\cap\tau\left(\bigcap_{g\in G}gH_k'g^{-1}\right)\tau^{-1}.
\]
The criterion ensuring $C^*$-simplicity of $\Gamma$ was that $H_k=\{1\}$ for some $k\geq 0$ (in fact, $\Gamma$ is a Powers group).

We claim that Theorem~\ref{hnnpowers} is a stronger result.
Indeed, for $k\geq 1$, let $C_k$ be the set of elements in $\Gamma$ of length $\leq k+1$.
Then $H_k=\{1\}$ implies $\bigcap_{r\in C_k}rHr^{-1}=\{1\}$, since each $H_k$ is obtained by taking intersections of sets of the form $rHr^{-1}$, $r$ running through a subset of $C_k$.
For $\eps\in\{\pm1\}$ and $s\in S_{-\eps}\setminus\{1\}$, then $s\tau^{(k+2)\eps}C_k\subseteq\Gamma\setminus T^\dagger_{\eps}$ due to Lemma~\ref{typeshit2}.
Therefore
\[
K_\eps=\bigcap_{r\in\Gamma\setminus T^\dagger_\eps}rHr^{-1}\subseteq s\tau^{(k+2)\eps}\left(\bigcap_{r\in C_k}rHr^{-1}\right)\tau^{-(k+2)\eps}s^{-1}=\{1\}.
\]
\end{remark}

The following result, similar to \cite[Theorem~3.7]{ivanovomland}, now holds.
\begin{proposition}\label{hnn-finite}
Let $\Gamma=\operatorname{HNN}(G,H,\theta)$ be a non-ascending HNN extension such that $H\cap\theta(H)$ is finite. Then the following are equivalent:
\begin{itemize}
\item[(i)] $\Gamma$ is icc;
\item[(ii)] $\ker{\Gamma}=\{1\}$;
\item[(iii)] $\operatorname{int}{\Gamma}=\{1\}$;
\item[(iv)] $\Gamma$ is a Powers group;
\item[(v)] $\Gamma$ is $C^*$-simple;
\item[(vi)] $\Gamma$ has the unique trace property.
\end{itemize}
\end{proposition}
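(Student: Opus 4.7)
The plan is to close the six equivalences by combining three pieces: the triangle $(i) \Leftrightarrow (ii) \Leftrightarrow (vi)$ via the collapse of the sequence~(\ref{eq:sequence}); the chain $(iii) \Rightarrow (iv) \Rightarrow (v) \Rightarrow (vi)$ via Theorem~\ref{hnnpowers} and standard facts; and the remaining implication $(ii) \Rightarrow (iii)$.

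First I would observe that $1, \tau^{-1} \in \Gamma \setminus T_1^{\dagger}$ (the first has length $0$, the second type $-1$), so
\[
\ker\Gamma \;\subseteq\; K_1 \;\subseteq\; H \cap \tau^{-1}H\tau \;=\; H \cap \theta(H),
\]
which is finite by hypothesis.  The remark following~(\ref{eq:sequence}) then forces the sequence to collapse to $FC(\Gamma) = R(\Gamma) = \ker\Gamma$; since $\Gamma$ is icc iff $FC(\Gamma) = \{1\}$, and $\Gamma$ has the unique trace property iff $R(\Gamma) = \{1\}$ by \cite[Theorem~1.3]{bkko}, the triangle $(i) \Leftrightarrow (ii) \Leftrightarrow (vi)$ follows at once.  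Next, $(iii) \Rightarrow (iv)$ is the concluding statement of Theorem~\ref{hnnpowers}; $(iv) \Rightarrow (v)$ is the classical fact that Powers groups are $C^*$-simple; and $(v) \Rightarrow (vi)$ is \cite{paschkesalinas}.

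The main obstacle is $(ii) \Rightarrow (iii)$.  My plan is to prove that $K_1$ is normal in $\Gamma$; since $K_1 \subseteq H$, this will force $K_1 \subseteq \bigcap_{r\in\Gamma}rHr^{-1} = \ker\Gamma$, and under~(ii) we then get $K_1 = \{1\}$, whence $\operatorname{int}\Gamma = \{1\}$ via Lemma~\ref{wordlemma}.  Normality of $K_1$ in $G$ is easy: since $G \subseteq \Gamma \setminus T_1^{\dagger}$, we have $K_1 \subseteq H_G := \bigcap_{g\in G}gHg^{-1}$, which is $G$-normal.  The hardest step will be establishing $\tau$-invariance, $\tau K_1 \tau^{-1} = K_1$.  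I would exploit that $K_1 \subseteq H \cap \theta(H)$, so the reduction $\tau k \tau^{-1} = \theta^{-1}(k)$ is available, and iterate using $\tau^{-n} \in \Gamma \setminus T_1^{\dagger}$ for all $n \geq 0$ to show $K_1 \subseteq D_\infty := \{h \in H : \theta^k(h) \in H \text{ for all } k \geq 0\}$, a finite $\theta$-stable subgroup of $H \cap \theta(H)$.  Combining this with the identity $K_{-1} = \theta(K_1)$ deduced from Lemma~\ref{wordlemma} together with the $G$-normality of $K_{-1}$ should force $\theta(K_1) = K_1$, closing the argument.
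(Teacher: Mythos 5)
Your handling of the easy implications is sound and matches the paper: the chain (iii)\,$\Rightarrow$\,(iv)\,$\Rightarrow$\,(v)\,$\Rightarrow$\,(vi) via Theorem~\ref{hnnpowers}, and the observation that $\ker\Gamma\subseteq K_1\subseteq H\cap\tau^{-1}H\tau=H\cap\theta(H)$ is finite, so that \eqref{eq:sequence} collapses to $FC(\Gamma)=R(\Gamma)=\ker\Gamma$ and the triangle (i)\,$\Leftrightarrow$\,(ii)\,$\Leftrightarrow$\,(vi) follows. The problem is (ii)\,$\Rightarrow$\,(iii), where the step you call ``easy'' is a genuine gap.

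From $G\subseteq\Gamma\setminus T_1^\dagger$ you correctly get $K_1\subseteq\bigcap_{g\in G}gHg^{-1}$, but being \emph{contained in} a $G$-normal subgroup does not make $K_1$ itself $G$-normal. For $g\in G\setminus H$, conjugation replaces the index set $\Gamma\setminus T_1^\dagger$ in the definition of $K_1$ by $\Gamma\setminus gT_1^\dagger$, and since $gT_1^\dagger$ and $T_1^\dagger$ are disjoint, neither index set contains the other; no containment between $K_1$ and $gK_1g^{-1}$ is available. Worse: the remark following Lemma~\ref{wordlemma} shows $\ker\Gamma=K_1\cap gK_1g^{-1}$ for \emph{any} $g\in G\setminus H$, so the existence of even one such $g$ with $gK_1g^{-1}=K_1$ is already equivalent to $K_1=\ker\Gamma$ --- your ``easy'' normality claim is essentially the statement to be proved. (Note also that $H$-normality plus $\tau$-invariance would not suffice anyway, since $\langle H,\tau\rangle\neq\Gamma$ in general.) A second inaccuracy: Lemma~\ref{wordlemma} gives only that the normal closures of $K_{-1}$ and $K_1$ coincide, not the identity $K_{-1}=\theta(K_1)$; in the example of Section~\ref{example} one has $g_1\in\theta(K_1)\setminus K_{-1}$, so that identity cannot be quoted for free. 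The finiteness of $H\cap\theta(H)$ has to enter more structurally than through your $D_\infty$. The paper's route for (ii)\,$\Rightarrow$\,(iii) is the descending chain $H_0\supseteq H_1\supseteq\cdots$ of Remark~\ref{delaharpecrit}: its intersection is $\ker\Gamma$, each $H_k$ with $k\geq 1$ is a subgroup of the finite group $H\cap\theta(H)$, so if $\ker\Gamma=\{1\}$ the chain stabilizes at $\{1\}$ after finitely many steps, and the computation in Remark~\ref{delaharpecrit} then yields $K_\eps\subseteq s\tau^{(k+2)\eps}\bigl(\bigcap_{r\in C_k}rHr^{-1}\bigr)\tau^{-(k+2)\eps}s^{-1}=\{1\}$. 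You should replace the normality argument by this (or an equivalent pigeonhole on the finite group $H\cap\theta(H)$).
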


\begin{proof}
It is clear that (iii)~$\Longrightarrow$~(iv)~$\Longrightarrow$~(v)~$\Longrightarrow$~(vi)~$\Longrightarrow$~(i).

Note that $\ker{\Gamma}\subseteq H\cap\theta(H)$ so the kernel is finite, and therefore (i)~$\Longrightarrow$~(ii) by \eqref{eq:sequence} and the preceding comment.

Thus only (ii)$\,\Longrightarrow\,$(iii) remains. Suppose that $\ker{\Gamma}=\{1\}$.
Note that $\ker{\Gamma}$ coincides with the intersection of the decreasing sequence
\[
H_0\supseteq H_1\supseteq \dotsb H_k\supseteq H_{k+1}\supseteq \dotsb
\]
of Remark~\ref{delaharpecrit}.
For $k\geq 1$, $H_k$ is a subgroup of $H\cap\theta(H)$ and therefore finite, meaning that $H_k$ must be trivial for some $k\geq 1$.
As in Remark~\ref{delaharpecrit}, we conclude that $K_\eps$ is trivial for $\eps\in\{\pm 1\}$.
\end{proof}
The above result indicates that in order to search for examples of non-ascending HNN extensions $\operatorname{HNN}(G,H,\theta)$ that are not $C^*$-simple but do satisfy the unique trace property,
we have to ensure that the image of $\theta$ inside $G$ is not too far away from $H$.
We give such an example in Section~\ref{example}.

\medskip

%Recalling the discussion before Proposition~\ref{interior-kernel}
An alternate characterization of recurrence of a subgroup $H$ of a group $G$
is that there exists a finite subset $F\subseteq G\setminus\{1\}$ such that $F\cap gHg^{-1}\neq\oo$ for all $g\in G$ (see \cite[Section~5]{kennedy2015char}).
Combining \cite[Proposition~5.2]{kennedy2015char} with Theorem~\ref{hnnpowers}, we obtain the following result.

\begin{lemma}\label{recurrence}
Let $\Gamma=\operatorname{HNN}(G,H,\theta)$ be a non-ascending HNN extension. The following are equivalent:
\begin{itemize}
\item[(i)] $K_{-1}$ and $K_1$ are non-trivial;
\item[(ii)] $H$ is recurrent in $\Gamma$;
\item[(iii)] $G$ is recurrent in $\Gamma$.
\end{itemize}
\end{lemma}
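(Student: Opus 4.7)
The plan is to establish the cycle (i) $\Longrightarrow$ (ii) $\Longrightarrow$ (iii) $\Longrightarrow$ (i), using the alternate characterization of recurrence stated just before the lemma, together with Theorem~\ref{hnnpowers} and Lemma~\ref{wordlemma}. The essential observation is that the two quasi-kernels together ``cover'' the whole group, in the sense that $\Gamma = (\Gamma\setminus T_{-1}^\dagger)\cup(\Gamma\setminus T_1^\dagger)$, so non-triviality of both $K_{-1}$ and $K_1$ gives enough elements to exhibit a finite witness set for recurrence.

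For (i) $\Longrightarrow$ (ii): assuming both $K_{-1}$ and $K_1$ are non-trivial, pick $f_\eps \in K_\eps\setminus\{1\}$ and set $F=\{f_{-1},f_1\}$. Given any $g\in\Gamma$, I note that $g^{-1} \in \Gamma\setminus T_\eps^\dagger$ for some $\eps\in\{\pm1\}$ by the cover above, so by definition of $K_\eps$ I get $gf_\eps g^{-1}\in H$, hence $f_\eps\in F\cap gHg^{-1}$. So $F$ witnesses recurrence of $H$. The implication (ii) $\Longrightarrow$ (iii) is immediate, because $H\subseteq G$ gives $gHg^{-1}\subseteq gGg^{-1}$ and therefore the same finite set $F$ witnesses recurrence of $G$.

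For (iii) $\Longrightarrow$ (i), I would argue by contrapositive. Suppose some $K_\eps$ is trivial. By Lemma~\ref{wordlemma}, the normal closures of $K_{-1}$ and $K_1$ coincide, so actually $K_{-\eps}$ is trivial as well, and condition (ii) of Theorem~\ref{hnnpowers} is in force. Hence condition (iii) of that theorem applies: given any finite $F\subseteq\Gamma\setminus\{1\}$ there exists $g\in\Gamma$ with $gFg^{-1}\cap G=\emptyset$. Replacing $g$ by its inverse, this says that for every finite $F$ there is some $h\in\Gamma$ with $F\cap hGh^{-1}=\emptyset$, i.e., $G$ is not recurrent in $\Gamma$.

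The only place where care is needed is matching the two phrasings of recurrence (using $g$ versus $g^{-1}$), and correctly invoking Lemma~\ref{wordlemma} to move between $K_{-1}$ and $K_1$; neither is a serious obstacle. Everything else is a direct translation between the definitions of the quasi-kernels and the finite-witness form of recurrence. The implication (iii) $\Longrightarrow$ (i) is the one that genuinely uses the non-trivial content of Theorem~\ref{hnnpowers}, but once that theorem is available the argument is essentially bookkeeping.
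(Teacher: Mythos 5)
Your proof is correct and is essentially the paper's argument: the paper gives no separate proof of this lemma beyond the remark that it follows by combining the finite-witness characterization of recurrence (\cite[Proposition~5.2]{kennedy2015char}) with Theorem~\ref{hnnpowers}, and your cycle (i)~$\Rightarrow$~(ii)~$\Rightarrow$~(iii)~$\Rightarrow$~(i) spells out exactly that combination, with the (i)~$\Rightarrow$~(ii) step reproducing the covering argument $\Gamma=(\Gamma\setminus T_{-1}^\dagger)\cup(\Gamma\setminus T_1^\dagger)$ already used in the proof of (vi)~$\Rightarrow$~(ii) of Theorem~\ref{hnnpowers}. The $g$ versus $g^{-1}$ bookkeeping you flag is harmless, since the witness condition quantifies over all of $\Gamma$.
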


\begin{corollary}
Let $\Gamma=\operatorname{HNN}(G,H,\theta)$ be a non-ascending HNN extension.
If $H$ is non-recurrent, then $\Gamma$ is $C^*$-simple.
Consequently, if $H$ is amenable, then $\Gamma$ is $C^*$-simple if and only if $H$ is \emph{not} recurrent in $\Gamma$.
\end{corollary}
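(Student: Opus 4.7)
The plan is to read off both statements from Lemma~\ref{recurrence}, Theorem~\ref{hnnpowers}, and Kennedy's characterization of $C^*$-simplicity via amenable recurrent subgroups. The work has already been done upstream; what remains is routine bookkeeping.

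First I would handle the main implication. Assume that $H$ is non-recurrent in $\Gamma$. By the contrapositive of (i)~$\Longrightarrow$~(ii) in Lemma~\ref{recurrence}, it cannot be the case that both quasi-kernels $K_{-1}$ and $K_{1}$ are non-trivial, so at least one of them is trivial. Theorem~\ref{hnnpowers} (equivalence of (ii) with (i) and the final clause) then yields that $\Gamma$ is a Powers group, and in particular $C^*$-simple.

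For the ``consequently'' part, suppose $H$ is amenable. The forward direction (non-recurrent implies $C^*$-simple) is exactly the first assertion and does not use amenability. For the converse, if $H$ is recurrent, then $H$ is automatically non-trivial (otherwise every intersection $\bigcap_j g_j H g_j^{-1}$ in the defining condition would be trivial), so $H$ furnishes a non-trivial amenable recurrent subgroup of $\Gamma$. By Kennedy's characterization \cite[Theorem~4.1]{kennedy2015char} (the converse direction of the statement cited as \cite[Theorem~5.3]{kennedy2015char} in the preliminaries, and already invoked in Proposition~\ref{prop:bb}), the presence of such a subgroup forces $\Gamma$ to fail to be $C^*$-simple. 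Combining the two directions proves the claimed equivalence.

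No serious obstacle arises: the only subtlety is to be explicit that Lemma~\ref{recurrence} allows one to pass between recurrence of $H$ and non-triviality of \emph{both} quasi-kernels (so that non-recurrence yields at least one trivial quasi-kernel, which is precisely what Theorem~\ref{hnnpowers} needs), and to invoke the full equivalence (not only the sufficiency) in Kennedy's theorem for the converse direction of the second statement.
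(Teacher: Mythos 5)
Your argument is correct and follows essentially the same route as the paper: non-recurrence of $H$ gives triviality of the quasi-kernels via Lemma~\ref{recurrence}, Theorem~\ref{hnnpowers} then gives $C^*$-simplicity, and the converse in the amenable case is Kennedy's characterization. Your extra remarks (that one trivial quasi-kernel suffices for condition~(ii) of Theorem~\ref{hnnpowers}, that a recurrent subgroup is automatically non-trivial, and that the full equivalence in Kennedy's theorem is what is needed) are accurate but do not change the substance.
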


\begin{proof}
If $H$ is not recurrent, then $K_{-1}$ and $K_1$ are trivial by Lemma~\ref{recurrence}, and therefore $\Gamma$ is $C^*$-simple by Theorem~\ref{hnnpowers}.
The last part now follows from \cite[Theorem~5.3]{kennedy2015char}.
\end{proof}

\begin{remark}
There is also another way of seeing that $\Gamma=\operatorname{HNN}(G,H,\theta)$ is not $C^*$-simple when $H$ is amenable and the quasi-kernels $K_{-1}$ and $K_1$ are non-trivial.

Note first that if $K_1=\ker{\Gamma}$, then $K_{-1}=\ker{\Gamma}$.
Thus, by assumption, $\ker{\Gamma}$ is a non-trivial normal amenable subgroup of $\Gamma$, and hence $\Gamma$ cannot be $C^*$-simple.
Hence, we may assume that both $K_1$ and $K_{-1}$ are different from $\ker{\Gamma}$.

Choose $a\in K_1\setminus\ker{\Gamma}$ and $b\in K_{-1}\setminus\ker{\Gamma}$.
Then
\[
\begin{split}
\{gH \mid  gH\neq agH\}&\subseteq\{gH\mid g\in T_{-1}\} \quad\text{and}\\
\{gH \mid  gH\neq bgH\}&\subseteq\{gH\mid g\in T_1\},
\end{split}
\]
which are clearly disjoint.
By using the technique from Proposition~4.8 in \cite{haagerupolesen}, also explained in \cite[p.~12]{ozawahandout},
the action of $\Gamma$ on $\Gamma/H$ gives rise to a unitary representation $\pi\colon G\to\ell^2(\Gamma/H)$,
that extends to a continuous representation of $C^*_r(\Gamma)$.
It follows that $(1-\lambda(a))(1-\lambda(b))$ generates a proper two-sided closed ideal of $C^*_r(\Gamma)$.
Hence, $\Gamma$ is not $C^*$-simple.
\end{remark}

\subsection{Boundary actions of non-ascending HNN extensions}

Now let $T$ denote the Bass-Serre tree of an HNN extension $\Gamma=\operatorname{HNN}(G,H,\theta)$.
It was remarked in the beginning of the previous subsection that the action of $\Gamma$ on $T$ is transitive, so it is also minimal.
Moreover, we either have $\ov{\partial T}=\partial T$ or $\ov{\partial T}=T\cup\partial T$ in $T\cup\partial T$ in the shadow topology, since $T$ is regular.
If $\lvert\partial T\rvert\leq 2$, every vertex in $T$ has degree~$2$, so that $H=\theta(H)=G$ and $\Gamma=G\rtimes_{\theta}\Z$.
%Furthermore, we have the following fact, due to de~la~Harpe and Pr\'{e}aux \cite[Proposition~20]{dlhpreaux}.

\begin{proposition}
Let $\Gamma=\operatorname{HNN}(G,H,\theta)$ be an HNN extension, and let $T$ be the Bass-Serre tree of $\Gamma$. If $\lvert\partial T\rvert\geq 3$, then the following are equivalent:
\begin{itemize}
\item[(i)] The action of $\Gamma$ on $T$ is strongly hyperbolic.
\item[(ii)] $\Gamma$ is non-ascending.
\end{itemize}
If any of these two conditions is satisfied, $\ov{\partial T}$ is a $\Gamma$-boundary in the shadow topology.
\end{proposition}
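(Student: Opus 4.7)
The plan rests on the fact that the stable letter $\tau$ always acts as a hyperbolic automorphism on $T$. By Britton's lemma $\tau$ is not conjugate into any vertex stabilizer $gGg^{-1}$, so it is not elliptic; and since left-multiplication by $\tau$ sends each edge of the form $gH$ again to an edge of the form $g'H$ (never to its reverse $\ov{g'H}$), $\tau$ is not an inversion. Consequently $\tau$ is hyperbolic with axis $\{\tau^nG:n\in\Z\}$ and two fixed ends $\xi_\pm\in\partial T$, represented by the forward and backward rays $G,\tau^{\pm1}G,\tau^{\pm2}G,\ldots$.

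For (i)$\Rightarrow$(ii) I would argue by contrapositive: suppose $\Gamma$ is ascending. By the obvious symmetry that swaps $\tau$ with $\tau^{-1}$ (and hence the roles of $H$ and $\theta(H)$), it is enough to treat $H=G$. Iterating the HNN relation $g\tau=\tau\theta(g)$ gives $g\tau^n=\tau^n\theta^n(g)\in\tau^nG$ for every $g\in G=H$ and $n\ge0$, so each $g\in G$ fixes the forward ray pointwise, while $\tau$ merely shifts it. Since $\Gamma=\langle G,\tau\rangle$, the end $\xi_+$ is globally $\Gamma$-fixed. But a transverse pair of hyperbolic automorphisms has four distinct fixed ends by definition, in particular no common fixed end, so $\Gamma\curvearrowright T$ cannot be strongly hyperbolic.

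For (ii)$\Rightarrow$(i) the main work is to produce a transverse pair explicitly. The elementary fact that a group is never the union of two proper subgroups, applied to the non-ascending hypothesis $H,\theta(H)\subsetneq G$, furnishes an element $s\in G\setminus(H\cup\theta(H))$. I claim $\tau$ and $s\tau s^{-1}$ are transverse. The latter is hyperbolic with axis $s\cdot\{\tau^nG\}$ and fixed ends $s\xi_\pm$, so the task reduces to verifying that the four ends $\xi_+,\xi_-,s\xi_+,s\xi_-$ are pairwise distinct. Unwinding the definition of cofinality, each potential collision $s\xi_\varepsilon=\xi_{\varepsilon'}$ translates into the existence of non-negative integers $a,b$ forcing some $\tau^{-\varepsilon' a}s\tau^{\varepsilon b}$ to lie in $G$. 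The hard part is the ensuing four-way case analysis using Britton's lemma: a reduction inside $\tau^{-1}s\tau$ would force $s\in H$ and one inside $\tau s\tau^{-1}$ would force $s\in\theta(H)$, while $\tau^{\pm1}s\tau^{\pm1}$ admits no cancellation at all. In every case this contradicts our choice of $s$, so the four ends are distinct.

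Finally, the claim that $\ov{\partial T}$ is a $\Gamma$-boundary follows immediately from Lemma~\ref{lem:T-boundary}: under either condition the action $\Gamma\curvearrowright T$ is transitive on vertices (hence minimal) and, by the equivalence just established, strongly hyperbolic.
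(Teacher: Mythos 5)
Your proof is correct, but it takes a genuinely different route from the paper: the paper disposes of the equivalence in one line by citing \cite[Proposition~20]{dlhpreaux}, and then, exactly as you do, invokes transitivity (hence minimality) together with Lemma~\ref{lem:T-boundary} for the boundary statement. What you supply instead is a self-contained argument: Britton's lemma (or the exponent-sum homomorphism $\Gamma\to\Z$) shows $\tau$ is hyperbolic with axis $\{\tau^nG\}_{n\in\Z}$; in the ascending case the identity $g\tau^n=\tau^n\theta^n(g)$ exhibits a globally fixed end, which precludes a transverse pair since any two hyperbolic elements would then share that end; and in the non-ascending case an element $s\in G\setminus(H\cup\theta(H))$ (which exists because a group is never the union of two proper subgroups) yields the transverse pair $\tau$, $s\tau s^{-1}$ via Britton's lemma. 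This buys a proof readable without consulting \cite{dlhpreaux} and makes visible exactly where non-ascendingness enters, namely in the two possible Britton pinches $s\in H$ and $s\in\theta(H)$; the citation-based proof is shorter but opaque. One small point to tighten: when you translate a collision $s\xi_{\eps}=\xi_{\eps'}$ into $\tau^{-\eps' a}s\tau^{\eps b}\in G$ for ``non-negative'' $a,b$, the case $a=b=0$ merely records that the two rays share the origin $G$ and yields no contradiction; you should observe that two cofinal rays with the same origin must coincide (by uniqueness of geodesics), so the relation already holds with $a,b\geq 1$, which is precisely the range in which your four-way case analysis applies.
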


\begin{proof}
%We will only prove that (ii) implies (i); the other implication is proved in \cite{dlhpreaux}.
%Taking $s\in G\setminus H$ and $t\in G\setminus\theta(H)$, then $s\tau$ and $t\tau^{-1}$ define hyperbolic automorphisms of $T$ due to Lemma~\ref{typeshit2}, since they fix no vertices in $T$.
%Moreover, the intersection of their axes is $\{G\}$, as one may easily see that $(t\tau^{-1})^{-m}(s\tau)^n\in G$ for $m,n\in\Z$ implies $m=n=0$. Hence $s\tau$ and $t\tau^{-1}$ are transverse.
%Finally, since the action of $G$ on $T$ is minimal, it follows from (i) and Lemma~\ref{lem:T-boundary} that $\ov{\partial T}$ is indeed a $\Gamma$-boundary.
The equivalence follows from \cite[Proposition~20]{dlhpreaux}, and since the action of $\Gamma$ on $T$ is minimal, it follows from Lemma~\ref{lem:T-boundary} that $\ov{\partial T}$ is indeed a $\Gamma$-boundary.
\end{proof}

The shadows in $T\cup\partial T$ can be described as follows.
If $g$ has normal form
\[
g=g_1\tau^{\eps_1}g_2\tau^{\eps_2}\cdots g_n\tau^{\eps_n}g_{n+1}
\]
for $n\geq 1$, let $U(g)$ be the subset of all elements $hG$ where the normal form of $h$ begins with $gg_{n+1}^{-1}=g_1\tau^{\eps_1}g_2\tau^{\eps_2}\cdots g_n\tau^{\eps_n}$,
as well as all equivalence classes of rays identifiable with a ray beginning with $g_1\tau^{\eps_1}g_2\tau^{\eps_2}\cdots g_n\tau^{\eps_n}$.
Then $U(g)$ is an extended half-tree, and by letting $V(g)$ be the complement extended half-tree resulting from removing the edge connecting $gg_{n+1}^{-1}\tau^{-\eps_n}G$ and $gG$,
the collection of all sets of the form $U(g)$ and $V(g)$, $g\in\Gamma$, generates the shadow topology.
%The collection of all sets of the form $U(g)$ and $V(g)$, $g\in\Gamma$, is the desired basis of clopen sets. 

For any $g\in\Gamma$, let $K(g)$ (resp.\ $L(g)$) be the fixator subgroup of the extended half-tree $U(g)$ (resp.\ $V(g)$).
Then $K(g)$ is the fixator of the half-tree $U(g)\cap T$, and $L(g)$ is the fixator of $V(g)\cap T$.
%(in the sense that the images of $0\leq i\leq n$ of this ray are determined by the above path).

\begin{lemma}\label{stabmerighthere}
Let $\Gamma=\operatorname{HNN}(G,H,\theta)$ be a non-ascending HNN extension with Bass-Serre tree $T$.
For any $g\in\Gamma\setminus G$ with direction $\eps$,
\[
K(g)=gK_{-\eps}g^{-1},\quad L(g)=gt^{-1}\tau^{-\eps}K_{\eps}\tau^{\eps}tg^{-1},
\]
where $t\in G$ is the end letter of $g$.
Moreover, $g\in\Gamma$ fixes all $x\in T$ if and only if $g\in\ker{\Gamma}$.
\end{lemma}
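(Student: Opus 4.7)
The strategy is to reduce each fixator computation to that of a ``standard'' half-tree rooted at the base vertex $G$, whose fixator can be identified with a quasi-kernel. Write $g$ in normal form as $g=g_1\tau^{\eps_1}\cdots g_n\tau^{\eps_n}t$ with end letter $t\in G$ and direction $\eps=\eps_n$, and let $g'=gt^{-1}$ denote the prefix, so that $U(g)=U(g')$ and $gG=g'G$.

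For $K(g)$, I would apply $g'^{-1}$ to translate $U(g')$ onto a half-tree $\tilde U$ at $G$ whose removed edge connects $G$ and $\tau^{-\eps}G$. Using uniqueness of the normal form, the vertex set of $\tilde U$ is exactly $\{kG:k\in\Gamma\setminus T_{-\eps}^\dagger\}$; indeed, the concatenation $g'k$ produces the prefix $g'$ in the normal form of $g'k$ iff no reduction occurs at the junction, which by the definition of $T_{-\eps}^\dagger$ amounts to $k\notin T_{-\eps}^\dagger$. Since $\operatorname{Stab}(kG)=kGk^{-1}$, the fixator of $\tilde U$ is $\bigcap_{k\in\Gamma\setminus T_{-\eps}^\dagger}kGk^{-1}$. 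Using Lemma~\ref{typeshit2} to check that $k\tau^{\eps}\in\Gamma\setminus T_{-\eps}^\dagger$ whenever $k\in\Gamma\setminus T_{-\eps}^\dagger$, and invoking $G\cap\tau^{\eps}G\tau^{-\eps}=H_{-\eps}$ (converted into an $H$-conjugate via the HNN relation $\tau H_1\tau^{-1}=H$ when $\eps=-1$), this intersection collapses to $\bigcap_{k\in\Gamma\setminus T_{-\eps}^\dagger}kHk^{-1}=K_{-\eps}$. Conjugating back produces $K(g)=g'K_{-\eps}g'^{-1}$, i.e.\ the asserted formula.

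The computation of $L(g)$ proceeds by the mirror argument: setting $w=g'\tau^{-\eps}$, a representative of the vertex $v_{n-1}$ that lies in $V(g)$, the conjugation by $w^{-1}=\tau^{\eps}g'^{-1}$ translates $V(g)$ onto the standard half-tree at $G$ whose removed edge now runs to $\tau^{\eps}G$. The symmetric analysis identifies its fixator with $K_{\eps}$, giving $L(g)=wK_{\eps}w^{-1}=g'\tau^{-\eps}K_{\eps}\tau^{\eps}g'^{-1}=gt^{-1}\tau^{-\eps}K_{\eps}\tau^{\eps}tg^{-1}$.

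For the moreover clause, an element $g\in\Gamma$ fixes every $x\in T$ iff it stabilizes every vertex, i.e.\ $g\in\bigcap_{r\in\Gamma}rGr^{-1}$; intersecting with $r\tau G\tau^{-1}r^{-1}$ and using $G\cap\tau G\tau^{-1}=H$ collapses this to $\bigcap_{r\in\Gamma}rHr^{-1}$, which equals $\ker\Gamma$ by the definition given in Section~\ref{HNNbasics}. The main technical hurdle throughout is the careful normal-form bookkeeping needed to verify that right multiplication by $\tau^{\eps}$ preserves membership in $\Gamma\setminus T_{-\eps}^\dagger$ across all possible shapes of $k$ (handling reductions at the right-hand end), and to convert correctly between $H$ and $\theta(H)$ in the case $\eps=-1$.
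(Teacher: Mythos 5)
Your overall strategy---translate the half-tree back to the base vertex $G$, identify its vertex set via normal forms, and intersect stabilizers---is essentially the paper's argument in different clothing: the paper stays at $gG$ and intersects the stabilizers of the edges of the half-tree directly, writing them as $grHr^{-1}g^{-1}$, while you pull back by $g'^{-1}$ and start from vertex stabilizers $kGk^{-1}$. One point of divergence deserves attention before the real gap: your computation genuinely yields $K(g)=g'K_{-\eps}g'^{-1}=gt^{-1}K_{-\eps}tg^{-1}$, which is \emph{not} literally ``the asserted formula'' $gK_{-\eps}g^{-1}$; the two agree only when the end letter $t$ normalizes $K_{-\eps}$. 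Since $U(g)$ depends only on $g'=gt^{-1}$, your expression is the one that is forced (and it is the exact analogue of the displayed formula for $L(g)$), whereas the displayed formula for $K(g)$ cannot be taken at face value for general end letters: in the group of Section~\ref{example} one has $U(\tau g_0)=U(\tau)$ but $g_0K_{-1}g_0^{-1}=\langle H(1,0)\rangle\neq\langle H(0,0)\rangle=K_{-1}$. So you should state the corrected formula rather than silently identifying your answer with the displayed one.

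The genuine gap is in the collapse $\bigcap_{k\in\Gamma\setminus T_{-\eps}^\dagger}kGk^{-1}=\bigcap_{k\in\Gamma\setminus T_{-\eps}^\dagger}kHk^{-1}$, which does not follow from the two ingredients you cite for both signs. For $\eps=1$ it does: $kGk^{-1}\cap (k\tau)G(k\tau)^{-1}=kHk^{-1}$ and every index $k$ is reached. For $\eps=-1$ you instead get $kGk^{-1}\cap(k\tau^{-1})G(k\tau^{-1})^{-1}=(k\tau^{-1})H(k\tau^{-1})^{-1}$, and as $k$ runs over $\Gamma\setminus T_1^\dagger$ the cosets $k\tau^{-1}H$ miss exactly the coset $H$ itself, because $H\tau\subseteq T_1^\dagger$. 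Geometrically, the missing term is the stabilizer of the \emph{removed} edge, whose far endpoint $\tau G$ is not a vertex of the half-tree, so no intersection of vertex stabilizers of the half-tree will produce it; a priori you only obtain containment in $\theta(H)\cap\bigcap_{rH\neq H}rHr^{-1}$, which may be strictly larger than $K_1$. The missing step is the observation---which the paper makes explicitly in its computation of $L(g_n\tau^{\eps_n})$---that a tree automorphism fixing the vertex $G$ and all of its neighbours except possibly $\tau G$ must also fix $\tau G$, hence lies in $G\cap\tau G\tau^{-1}=H$. The same issue recurs, with the sign reversed, in your $L(g)$ computation. With that one extra step both computations close; the ``moreover'' clause is fine as you argue it, since there the index set is all of $\Gamma$ and no coset is missed.
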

\begin{proof}
Suppose that $g'\in\Gamma$ is a fixator of all elements in $U(g)$ where
\[
g=g_1\tau^{\eps_1}g_2\tau^{\eps_2}\cdots g_n\tau^{\eps_n}g_{n+1}\in\Gamma
\]
in the normal form, and let $\eps$ be the direction of $g$.
Then $g'$ fixes all edges in the subtree spanned by $U(g)$, i.e., $g'(mH)=mH$ for all $m\in\Gamma$ with normal form beginning with $g_1\tau^{\eps_1}g_2\tau^{\eps_2}\cdots g_n\tau^{\eps}$.
Therefore $g'\in K(g)$ if and only if $g'(grH)=grH$ for all $r\in\Gamma$ with the normal form $r=r_1\tau^{f_1}r_2\tau^{f_2}\cdots r_m\tau^{f_m}r_{m+1}$ where either 
\begin{itemize}
\item[(1)] $m=0$, or 
\item[(2)] $m\geq 1$ and either (2a) $f_1=\eps$, or (2b) $f_1=-\eps$ and $r_1\notin H_\eps$.
\end{itemize} In case (2b), the fact $r_1\in S_{-f_1}=S_{\eps}$ implies that $r_1\neq 1$. Therefore the $r\in\Gamma$ of the above form constitute the set $\Gamma\setminus T^\dagger_{-\eps}$, so
\[
g'\in g\left(\bigcap_{r\in\Gamma\setminus T_{-\eps}^\dagger}rHr^{-1}\right)g^{-1}=gK_{-\eps}g^{-1}.
\]
Hence $K(g)=gK_{-\eps}g^{-1}$.

Now let $s=g_1\tau^{\eps_1}g_2\tau^{\eps_2}\cdots g_n\tau^{\eps_{n-1}}g_n$ and notice that $sU(g_n\tau^{\eps_n})\cap T=U(g)\cap T$.
Indeed, $m\in\Gamma$ has normal form beginning with $g_n\tau^{\eps_n}$ if and only $sm$ has normal form beginning with $g_1\tau^{\eps_1}g_2\tau^{\eps_2}\cdots g_n\tau^{\eps_n}$.
In particular, $sV(g_n\tau^{\eps_n})\cap T=V(g)\cap T$, so that $L(g)=sL(g_n\tau^{\eps_n})s^{-1}$.

We next observe that $L(g_n\tau^{\eps_n})$ is the subgroup of elements in $\Gamma$
fixing all vertices of half-trees of the form $U(r\tau^{\eps_n})$ for $r\in S_{-\eps_n}\setminus\{g_n\}$ and $U(t\tau^{-\eps_n})$ for $t\in S_{\eps_n}$, as well as the vertex $G$.
In particular, any $s\in L(g_n\tau^{\eps_n})$ must fix all the edges emanating from and ending in the vertex $G$,
i.e., $\{rH\}_{r\in S_{-1}}$ and $\{t\tau^{-1} H\}_{t\in S_1}$, so that $s\in\bigcap_{g\in G}g(H\cap\theta(H))g^{-1}$. Thus
\begin{align*}
L(g_n\tau^{\eps_n})
&=\bigcap_{r\in S_{-\eps_n}\setminus\{g_n\}}r\tau^{\eps_n}K_{-\eps_n}\tau^{-\eps_n} r^{-1}\cap\bigcap_{t\in S_{\eps_n}} t\tau^{-\eps_n} K_{\eps_n}\tau^{\eps_n}t^{-1}\cap\bigcap_{g\in G}g(H\cap\theta(H))g^{-1}\\
&=\bigcap_{\substack{r\in\Gamma\setminus g_nT_{\eps_n}^\dagger\\\lvert r\rvert\geq 1}}rHr^{-1}\cap\bigcap_{g\in G}g(H\cap\theta(H))g^{-1}\\
&=\bigcap_{\substack{r\in\Gamma\setminus T_{\eps_n}^\dagger\\\lvert r\rvert\geq 1}}g_nrHr^{-1}g_n^{-1}\cap \bigcap_{g\in G}g(H\cap\theta(H))g^{-1}\subseteq g_nK_{\eps_n}g_n^{-1}.
\end{align*}
Conversely, assume that $(g_nr)^{-1}s(g_nr)\in H$ for all $r\in\Gamma\setminus T^\dagger_{\eps_n}$.
Then for any $g'\in\Gamma$ such that $g'G$ belongs to one of the half-trees $U(r\tau^{\eps_n})$ for $r\in S_{-\eps_n}\setminus\{g_n\}$ and $U(t\tau^{-\eps_n})$ for $t\in S_{\eps_n}$,
we have $g_n^{-1}g'\notin\Gamma\setminus T^{\dagger}_{\eps_n}$, so that $sg'G=g'G$, and clearly $s\in G$. Hence $s\in L(g_n\tau^{\eps_n})$.
We conclude that
\[
L(g)=sL(g_n\tau^{\eps_n})s^{-1}=sg_nK_{\eps_n}g_n^{-1}s^{-1}=gg_{n+1}^{-1}\tau^{-\eps_n}K_{\eps_n}\tau^{\eps_n}g_{n+1}g^{-1},
\]
which completes the proof. 
\end{proof}

In the Bass-Serre tree of a non-ascending HNN extension $\Gamma=\operatorname{HNN}(G,H,\theta)$, remove the edge $H$ connecting $G$ and $\tau^{-1}G$.
By the above lemma, the fixator subgroups of the resulting half-trees are $K(\tau^{-1})=\tau^{-1} K_1\tau=\theta(K_1)$ and $L(\tau^{-1})=K_{-1}$. 
Similarly, if we remove the edge $\tau H$ connecting $G$ and $\tau G$,
then the elements that fix the two resulting half-trees are $\theta^{-1}(K_{-1})$ and $K_1$.
By Lemma~\ref{wordlemma}, the normal closures in $\Gamma$ of $K_{-1}$, $\theta(K_1)$, $\theta^{-1}(K_{-1})$, and $K_1$ all coincide.

\begin{proposition}\label{normalsubs}
Let $\Gamma$ be a non-ascending HNN extension with Bass-Serre tree $T$, and consider $T\cup\partial T$ equipped with the shadow topology.
The action of $\Gamma$ on $T$ satisfies
\[
\ker{\Gamma}=\ker (\Gamma\curvearrowright T)=\ker(\Gamma\curvearrowright\partial T)=\ker(\Gamma\curvearrowright\overline{\partial T})
\]
and
\[
\operatorname{int}{\Gamma}=\operatorname{int}(\Gamma\curvearrowright\partial T)=\operatorname{int}(\Gamma\curvearrowright\overline{\partial T}).
\]
\end{proposition}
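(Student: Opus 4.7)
The proposition collects several identifications that have essentially been obtained in the preceding lemmas, so the proof should amount to a short assembly of them.

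For the kernel equalities, I would begin with the chain
\[
\ker(\Gamma\curvearrowright T)=\ker(\Gamma\curvearrowright\partial T)=\ker(\Gamma\curvearrowright\overline{\partial T}),
\]
which is precisely the displayed identity right after Lemma~\ref{dontstabmehere} (combining Remark~\ref{closuresdontmatter} with Lemma~\ref{dontstabmehere}: an element that fixes $\partial T$ pointwise fixes each $Z_B(e)$, hence each $Z(e)$, hence every vertex). To insert $\ker{\Gamma}$ on the left, I would invoke the last sentence of Lemma~\ref{stabmerighthere}, which says $g\in\Gamma$ fixes every vertex of $T$ if and only if $g\in\ker{\Gamma}$. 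Together these give the four-fold equality.

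For the interior equalities, the identity $\operatorname{int}(\Gamma\curvearrowright\partial T)=\operatorname{int}(\Gamma\curvearrowright\overline{\partial T})$ is also part of the discussion immediately following Lemma~\ref{dontstabmehere}. It therefore remains to show $\operatorname{int}{\Gamma}=\operatorname{int}(\Gamma\curvearrowright\overline{\partial T})$. By Corollary~\ref{half-tree-fixators}, the right-hand side equals the normal closure in $\Gamma$ of the fixator $K$ of any half-tree of $T$. Using the explicit description in Lemma~\ref{stabmerighthere}, the fixator of such a half-tree can be taken to be $K(\tau^{-1})=\theta(K_1)$ or $L(\tau^{-1})=K_{-1}$ (obtained by removing the edge $H$), which are conjugate to $K_1$ and $K_{-1}$ respectively. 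Hence the normal closure of this fixator equals the normal closure of $K_{-1}$ (or $K_1$), which by Lemma~\ref{wordlemma} and the very definition of $\operatorname{int}{\Gamma}$ coincides with $\operatorname{int}{\Gamma}$.

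There is no real obstacle here; the only thing to be careful about is making sure that one matches the fixators of the \emph{vertex}-half-trees (as computed in Lemma~\ref{stabmerighthere}) with the fixators of the corresponding extended half-trees in $\overline{\partial T}$ (as appearing in Corollary~\ref{half-tree-fixators}). That matching is guaranteed by Lemma~\ref{dontstabmehere}, which asserts precisely that the fixator subgroups of $Z_0(e)$, $Z_B(e)$, and $Z(e)\cap\overline{\partial T}$ coincide. With that observation in place, the proof is simply a chain of citations.
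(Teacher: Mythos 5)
Your proof is correct and follows essentially the same route as the paper: $\ker\Gamma=\ker(\Gamma\curvearrowright T)$ via Lemma~\ref{stabmerighthere}, the identification of the quasi-kernels with fixators of half-trees combined with Corollary~\ref{half-tree-fixators} and Lemma~\ref{wordlemma} for the interiors, and the displayed identities after Lemma~\ref{dontstabmehere} for the remaining equalities. Nothing is missing.
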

\begin{proof}
By Lemma~\ref{stabmerighthere}, we have $\ker{\Gamma}=\ker(\Gamma\curvearrowright T)$.
The above paragraph says that both quasi-kernels $K_{-1}$ and $K_1$ are fixator subgroups of half-trees in $T$, so by Corollary~\ref{half-tree-fixators},
their normal closures, or the interior $\operatorname{int}{\Gamma}$, coincides with the interior of the action of $\Gamma$ on $\partial T$.
The remaining identities are explained after the proof of Lemma~\ref{dontstabmehere}.
\end{proof}

\begin{theorem}\label{hnn-utp-cstar}
Let $\Gamma$ be a non-ascending HNN extension with Bass-Serre tree $T$.
Then $\Gamma$ has the unique trace property if and only if $\operatorname{ker}{\Gamma}$ has the unique trace property,
and $\Gamma$ is $C^*$-simple if and only if $\operatorname{int}{\Gamma}$ is $C^*$-simple.
\end{theorem}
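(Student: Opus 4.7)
The strategy is to reduce the statement directly to Proposition~\ref{interior-kernel} applied to the $\Gamma$-boundary $X = \overline{\partial T}$. All the substantial work has in fact already been done in the preceding sections; what remains is to verify that this identification is legitimate and that the kernel and interior computed dynamically on $\overline{\partial T}$ coincide with the algebraic $\ker{\Gamma}$ and $\operatorname{int}{\Gamma}$.

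First, I would observe that non-ascendingness ensures the existence of nontrivial coset representative sets $S_{-1}$ and $S_1$, so every vertex of $T$ has degree $[G:H] + [G:\theta(H)] \geq 4$. In particular $|\partial T| \geq 3$, so by the proposition preceding the statement, the action $\Gamma \curvearrowright T$ is strongly hyperbolic. Minimality of $\Gamma \curvearrowright T$ is automatic (since the action on $T$ is transitive on vertices), so by Lemma~\ref{lem:T-boundary}, $\overline{\partial T}$ equipped with the shadow topology is a $\Gamma$-boundary.

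Next, I would invoke Proposition~\ref{normalsubs} to identify
\[
\ker(\Gamma \curvearrowright \overline{\partial T}) = \ker{\Gamma}, \qquad \operatorname{int}(\Gamma \curvearrowright \overline{\partial T}) = \operatorname{int}{\Gamma}.
\]
With these identifications in hand, Proposition~\ref{interior-kernel}(i) applied to $X = \overline{\partial T}$ gives that $\Gamma$ has the unique trace property if and only if $\ker{\Gamma}$ has the unique trace property, and Proposition~\ref{interior-kernel}(ii) applied to $X = \overline{\partial T}$ gives that $\Gamma$ is $C^*$-simple if and only if $\operatorname{int}{\Gamma}$ is $C^*$-simple.

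There is no real obstacle here: the heavy lifting was done in Proposition~\ref{interior-kernel} (which uses \cite{kalantarkennedy,bkko,furman,Haagerup2016,ivanovomland}), in Lemma~\ref{stabmerighthere} (which computes fixator subgroups of half-trees in terms of the quasi-kernels), and in Proposition~\ref{normalsubs} (which assembles these computations together with Corollary~\ref{half-tree-fixators} to identify the dynamical kernel and interior with their algebraic counterparts). The only point that requires a line of justification is the passage from non-ascendingness to the strong hyperbolicity of $\Gamma \curvearrowright T$, which as noted reduces to a degree count ruling out $|\partial T| \leq 2$.
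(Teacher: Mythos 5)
Your proposal is correct and follows exactly the paper's route: the paper's proof is the one-line deduction from Proposition~\ref{normalsubs} together with Proposition~\ref{interior-kernel}, and your additional verifications (transitivity gives minimality, non-ascendingness gives degree at least $4$ hence $\lvert\partial T\rvert\geq 3$ and strong hyperbolicity, so $\ov{\partial T}$ is a $\Gamma$-boundary by Lemma~\ref{lem:T-boundary}) are precisely the hypotheses the paper relies on implicitly. Nothing is missing.
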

\begin{proof}
This follows from Proposition~\ref{normalsubs} and Proposition~\ref{interior-kernel}.
\end{proof}

\begin{theorem}\label{amenablequasis}
Let $\Gamma$ be a non-ascending HNN extension, with quasi-kernels $K_{-1}$ and $K_1$.

If $\ker{\Gamma}$ is trivial, then $\Gamma$ is $C^*$-simple if and only if $K_{-1}$ and $K_1$ are trivial or non-amenable.

If $K_{-1}$ and $K_1$ are amenable, then $\Gamma$ is $C^*$-simple if and only if $K_{-1}$ and $K_1$ are trivial.
\end{theorem}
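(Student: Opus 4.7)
The plan is to reduce directly to Theorem~\ref{fixator-amenable} by recognizing each quasi-kernel as the fixator subgroup of a half-tree in the Bass-Serre tree $T$ of $\Gamma$.

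First I would invoke Lemma~\ref{stabmerighthere} and the discussion immediately preceding Proposition~\ref{normalsubs}: if one removes the edge $H$ from $T$ (connecting $G$ and $\tau^{-1}G$), the fixator subgroups of the two resulting half-trees are $K_{-1}$ and $\theta(K_1)=\tau^{-1}K_1\tau$; removing $\tau H$ instead gives fixators $K_1$ and $\theta^{-1}(K_{-1})$. Thus $K_{-1}$ and $K_1$ each appear, up to conjugation, as fixator subgroups of half-trees. Corollary~\ref{half-tree-fixators} then yields that each of $K_{-1}$ and $K_1$ is contained in a conjugate of the other, so $K_{-1}$ is amenable (respectively, trivial) if and only if $K_1$ is.

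Next I would use Proposition~\ref{normalsubs} to identify $\ker{\Gamma}$ with $\ker(\Gamma\curvearrowright T)$, so that the hypothesis on $\ker\Gamma$ in each statement matches the hypothesis on $N=\ker(\Gamma\curvearrowright T)$ appearing in Theorem~\ref{fixator-amenable}. For the first statement, taking $K$ to be (a conjugate of) $K_{-1}$, the ``in particular'' clause of Theorem~\ref{fixator-amenable} with trivial kernel gives that $\Gamma$ is $C^*$-simple if and only if $K_{-1}$ is trivial or non-amenable, which by the simultaneous amenability and triviality observed above is equivalent to the same property for $K_1$. For the second statement, the final sentence of Theorem~\ref{fixator-amenable} applies without any hypothesis on $\ker\Gamma$: whenever the fixator $K$ of a half-tree is amenable, $\Gamma$ is $C^*$-simple if and only if $K$ is trivial; applied to $K=K_{-1}$, this gives the conclusion.

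There is no genuine obstacle here; the argument is essentially a bookkeeping exercise that aligns the algebraically defined quasi-kernels with the geometric fixator subgroups so that Theorem~\ref{fixator-amenable} applies. The only subtlety worth stating carefully is the reduction from ``some half-tree fixator'' to ``the specific groups $K_{-1}$ and $K_1$,'' which is exactly what Corollary~\ref{half-tree-fixators} provides.
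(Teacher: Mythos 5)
Your proposal is correct and follows essentially the same route as the paper, whose proof is exactly the observation that $K_{-1}$ and $K_1$ are fixator subgroups of half-trees (via Lemma~\ref{stabmerighthere} and the discussion preceding Proposition~\ref{normalsubs}) combined with Theorem~\ref{fixator-amenable}; you have merely filled in the bookkeeping, including the point that Corollary~\ref{half-tree-fixators} makes the two quasi-kernels simultaneously trivial or amenable.
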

\begin{proof}
This follows from Proposition~\ref{normalsubs} and Theorem~\ref{fixator-amenable},
since both $K_{-1}$ and $K_1$ are fixator subgroups for half-trees in the Bass-Serre tree $T$.
\end{proof}

\begin{example}
Let $G=\Z$ and let $g\in G$ be a generator of $G$. For $m,n\in\Z\setminus\{0\}$,
define $H=\langle g^m\rangle$ (thus corresponding to $m\Z$) and an injective homomorphism $\theta\colon H\to G$ by $\theta(g^{km})=g^{kn}$ for $k\in\Z$.
Then the HNN extension $\operatorname{HNN}(G,H,\theta)=\operatorname{HNN}(\Z,m\Z,km\mapsto kn)$ is the \emph{Baumslag-Solitar group}
\[
\Gamma=\operatorname{BS}(m,n)=\langle g,\tau\mid\tau^{-1}g^m\tau=g^n\rangle.
\]
Clearly, $\Gamma$ is non-ascending if and only if $\min\{\lvert m\rvert,\lvert n\rvert\}\geq 2$.
A result of the second author \cite[Theorem~4.9]{ivanov2007} states that $\operatorname{BS}(m,n)$ is $C^*$-simple if and only if $\min\{\lvert m\rvert,\lvert n\rvert\}\geq 2$ and $\lvert m\rvert\neq\lvert n\rvert$.

We give a new proof using the $C^*$-simplicity criterion for HNN extensions given above.
Notice first that if $\lvert m\rvert=\lvert n\rvert$, then $H=\langle g^m\rangle$ is a normal abelian subgroup of $\Gamma$.
Furthermore, $\operatorname{BS}(\pm 1,n)$ and $\operatorname{BS}(m,\pm 1)$ are solvable.
Indeed, in the case $m=1$, $N=\langle\{\tau^k g\tau^{-k}\mid k\in\Z\}\rangle$ is a normal abelian subgroup of $\Gamma$, with $\Gamma/N$ infinite and cyclic.

Next assume that $\min\{\lvert m\rvert,\lvert n\rvert\}\geq 2$ and $\lvert m\rvert\neq\lvert n\rvert$ and let $d$ be the greatest common divisor of $m$ and $n$,
so that we may write $m=dm'$ and $n=dn'$ for $m',n'\in\Z$. We may assume that $\lvert n\rvert>\lvert m\rvert$ so that $\lvert n'\rvert>1$.
Clearly $\tau^{-1}H\tau=\langle g^n\rangle=\langle g^{dn'}\rangle$.
For $k\in\Z$ and $i\geq 1$, write $kd(n')^i=qm+r$ for $q\in\Z$ and $0\leq r<m$.
If
\[
G\ni\tau^{-1}g^{kd(n')^i}\tau=g^{qn}\tau^{-1}g^r\tau,
\]
then $r=0$, so $m\mid kd(n')^i$ and $m\mid kd$.
Hence $m(n')^i\mid kd(n')^i=qm$, meaning that $(n')^i$ divides $q$ and 
\begin{gather*}
\tau^{-1}g^{kn}\tau\in\langle g^{n(n')}\rangle=\langle g^{d(n')^2}\rangle,
\tau^{-i}g^{kn}\tau^i\in\langle g^{n(n')^i}\rangle=\langle g^{d(n')^{i+1}}\rangle.
\end{gather*}
This shows that $\tau^{-i}H\tau^{i}\cap G\subseteq\langle g^{d(n')^{i+1}}\rangle$ for $i\geq 1$, meaning that $K_1=\{1\}$.
Therefore $\Gamma$ is $C^*$-simple by Theorem~\ref{hnnpowers} or Theorem~\ref{amenablequasis}.
\end{example}

\section{A non-\texorpdfstring{$C^*$}{C*}-simple HNN extension with the unique trace property}\label{example}

In the following, let $A=\bigcup_{n\in\mathbb{N}}\{0,1\}^{2n}$ and define $X\subseteq A$ by
\[
X=\{(i_1,j_1,\ldots,i_n,j_n)\mid n\in\mathbb{N},\ (j_k,i_{k+1},j_{k+1}) \not\in \{ (0,0,1), (1,0,0) \}\text{ for all }1\leq k\leq n-1\}.
\]
We say that an element $x=(i_1,j_1,\ldots,i_n,j_n)\in X$ has \emph{length} $\ell(x)=n$. For $1\leq k\leq n$, we define $\pi_k\colon X\to\{0,1\}^2$ by
\[
\pi_k(i_1,j_1,\ldots,i_n,j_n)=(i_k,j_k).
\]
We let $\underline{H}$ be the group with generators $\{h(x)\mid x\in X\}$ satisfying the following relations:
\begin{itemize}
\item[(R1)] $h(x)^2=1$ for all $x\in X$;
\item[(R2)] $h(x) h(y) = h(y) h(x)$ whenever $\ell(x)=\ell(y)$;
\item[(R3)] for all $2\leq k+1\leq n$, 
\begin{multline*}
h(i'_1, j'_1, \dots, i'_k, j'_k) h(i_1, j_1, \dots, i_n, j_n) h(i'_1, j'_1, \dots, i'_k, j'_k) = \\ 
  \begin{cases}
        h(i_1, j_1, \dots,i_k, j_k, i_{k+1}\oplus 1, j_{k+1}, \dots, i_n, j_n), & \text{if } i'_1 = i_1, j'_1 = j_1, \dots, i'_k =i_k, j'_k = j_k=j_{k+1}, \\
        h(i_1, j_1, \dots,i_k, j_k, i_{k+1}, j_{k+1}, \dots, i_n, j_n), & \text{otherwise,}
  \end{cases}
\end{multline*}
\end{itemize}
where $\oplus$ denotes addition modulo $2$.
We now let $G$ be the group containing $\underline{H}$ as a subgroup as well as elements $g_{0},g_1$ such that $\langle \underline{H},g_{0},g_1\rangle=G$,
\[
g_{0}^2 = 1 = g_1^2,\quad g_{0} g_1 = g_1 g_{0},
\]
and such that for all $n\in\mathbb{N}_0$,
\begin{itemize}
\item $g_j h(i, j\oplus 1,i_1, j_1, \dots, i_n, j_n) =h(i, j\oplus 1,i_1, j_1, \dots, i_n, j_n)g_j$ and
\item $g_j h(i, j,i_1, j_1, \dots, i_n, j_n) = h(i\oplus 1,j ,i_1, j_1, \dots, i_n, j_n) g_j.$
\end{itemize}
For all $n \in \mathbb{N}$ and $x=(i_1,j_1,\ldots,i_n,j_n)\in X$, let us write 
\begin{multline*}
H(i_1,j_1,\ldots,i_n,j_n)=\{h(x')\mid x'\in X,\ \ell(x')\geq\ell(x),\ \pi_k(x')=\pi_k(x)\text{ for all }k=1,\ldots,n \}.
\end{multline*}
For $i\in\{0,1\}$, we define $H_i = \langle\underline{H},g_i\rangle$ and a map $\theta_i\colon\{h(x)\mid x\in X\}\cup\{g_i\}\to G$ by
\[
\theta_i(g_i) = h(0,i),\quad \theta_i(h(0,i\oplus 1))=g_{i\oplus 1},
\]
and for all $n\in\mathbb{N}_0$, 
\begin{itemize}
\item $\theta_i( h(i_1,j_1,\dots, i_n, j_n) ) =h(0, i, i_1,j_1, \dots, i_n, j_n)$ whenever $(i_1,j_1)\neq(0,i\oplus 1)$;
\item $\theta_i( h(0,i\oplus 1,i_1,j_1, \dots, i_n, j_n) ) = h(i_1, j_1, \dots, i_n, j_n)$ whenever $n\geq 1$.
\end{itemize}
\begin{lemma}
The map $\theta_i$ extends to a group homomorphism $H_i\to G$ with image $H_{i\oplus 1}$.
\end{lemma}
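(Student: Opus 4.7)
The plan is to treat $H_i$ as presented by the relations (R1), (R2), (R3) on the generators $\{h(x):x\in X\}$ together with $g_i^2=1$ and the commutation/conjugation relations of $g_i$ with the $h(x)$'s, and then to verify that $\theta_i$, as defined on generators, sends each such relation to a valid identity in $G$. Once this is done, $\theta_i$ extends to a homomorphism $H_i\to G$, and it only remains to compute the image. The square relations $h(x)^2=1$ and $g_i^2=1$ are immediate, since every value $\theta_i(h(x))$ and $\theta_i(g_i)$ is one of $g_{i\oplus 1}$ or $h(z)$, and all these are involutions in $G$ by the defining relations.

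For the commutation relations (R2), I would split into subcases according to whether each of $x,y$ (with $\ell(x)=\ell(y)$) begins with $(0,i\oplus 1)$. If neither does, then both $\theta_i(h(x))$ and $\theta_i(h(y))$ are length-$(\ell(x)+1)$ words sharing the prefix $(0,i)$, so they commute by (R2) in $G$. If exactly one of them, say $x$, does, then $\theta_i(h(x))$ equals $g_{i\oplus 1}$ (when $\ell(x)=1$) or $h(x')$ (when $\ell(x)\geq 2$), while $\theta_i(h(y))$ is a word starting with $(0,i)$; in the first case commutation follows from the rule $g_{i\oplus 1}h(i',i,\ldots)=h(i',i,\ldots)g_{i\oplus 1}$, and in the second case from the instance of (R3) with non-matching prefix. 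If both begin with $(0,i\oplus 1)$, we reduce to a same-length commutation of shorter words (or to $g_{i\oplus 1}^2=1$ when both have length $1$).

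For (R3), the verification is parallel but more elaborate: one tracks how prepending $(0,i)$ or stripping $(0,i\oplus 1)$ transforms the prefix-match and the $j_k=j_{k+1}$ condition. Each subcase, determined by whether $x'$, $x$, and the right-hand-side word $\tilde x$ begin with $(0,i\oplus 1)$, reduces to a specific instance of (R3) in $G$ after an appropriate shift of the index $k$; the constraints defining $X$ (the forbidden patterns $(0,0,1)$ and $(1,0,0)$) are precisely what make the transformed relation well-posed. The $g_i$-commutation/conjugation relations receive the same treatment: they become instances of (R3) with prefix $(0,i)$, since the flip $i\oplus 1$ in the rule $g_j h(i,j,\ldots)=h(i\oplus 1,j,\ldots)g_j$ mirrors the flip appearing in (R3) after prefix-match.

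Finally, to identify the image with $H_{i\oplus 1}$, I note that $g_{i\oplus 1}=\theta_i(h(0,i\oplus 1))$ and $h(0,i)=\theta_i(g_i)$ already lie in the image. For any $y\in X$ with $\ell(y)\geq 1$: if $y=(0,i,y')$ then the condition $y\in X$ forces $y'$ not to begin with $(0,i\oplus 1)$, so $h(y)=\theta_i(h(y'))$; otherwise $y$ does not begin with $(0,i)$, and the same condition ensures $(0,i\oplus 1,y)\in X$, so $h(y)=\theta_i(h(0,i\oplus 1,y))$. Hence every generator of $H_{i\oplus 1}=\langle\underline{H},g_{i\oplus 1}\rangle$ lies in the image, while the reverse inclusion is immediate from inspecting the values of $\theta_i$. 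I expect the main obstacle to be the case analysis for (R3); the definitions of $X$ and of $\theta_i$ are engineered so that these cases close up, but the bookkeeping is intricate.
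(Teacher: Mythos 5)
Your plan is the same as the paper's: verify that $\theta_i$ respects the defining relations generator by generator, then identify the image. The pieces you actually execute are correct, and your identification of the image is in fact more careful than the paper's (which simply asserts $\theta_i(H_i)=\langle\underline{H},g_{i\oplus 1}\rangle$); your observation that the constraints defining $X$ force $(0,i,y')\in X$ to have $\pi_1(y')\neq(0,i\oplus 1)$, and dually that $(0,i\oplus 1,y)\in X$ whenever $\pi_1(y)\neq(0,i)$, is exactly what makes the surjectivity onto $H_{i\oplus 1}$ work. Your handling of (R1), of (R2), and of the $g_i$-relations matches the paper's case division.

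The weakness is that the heart of the lemma --- checking that the conjugation relation (R3) is preserved in the ``flip'' case $i'_1=i_1,\dots,j'_k=j_k=j_{k+1}$ --- is only asserted to ``close up after an appropriate shift of the index $k$,'' and you explicitly defer the bookkeeping. This is precisely the content of the paper's proof: it reduces to the largest common prefix length $m$, disposes of the cases $m<\ell(y)$ and $j_{k+1}\neq j_k$ by commutation, and then verifies the two remaining cases $k=1$ (where $\theta_i(h(y))=g_{i\oplus 1}$ and the relation becomes the $g_{i\oplus 1}$-conjugation rule) and $k\geq 2$ (where both words acquire or lose the prefix $(0,i)$ and the relation becomes (R3) with $k$ shifted by one). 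Your sketch names the right mechanism and it does check out --- prepending $(0,i)$ shifts the prefix-match and the condition $j_k=j_{k+1}$ up by one, stripping $(0,i\oplus 1)$ shifts it down, and well-posedness of the flipped word follows from $j_k=j_{k+1}$ being incompatible with the forbidden patterns $(0,\ast,1)$ and $(1,\ast,0)$ --- but as written this is a correct plan rather than a complete proof; the paper's proof consists almost entirely of carrying out exactly these cases.
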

\begin{proof}
Let $x,y\in X$. If $\pi_1(x)\neq\pi_1(y)$, then (R2) and (R3) together imply that $h(x)$ and $h(y)$ commute.
If $\pi_1(x),\pi_1(y)\notin\{(0,i\oplus 1)\}$, then (R2) and (R3) together imply that $\theta_i(h(x))$ and $\theta_i(h(y))$ commute.
We may therefore assume $\pi_1(y)=(0,i\oplus 1)$.
If $\ell(y)\geq 2$, then $\pi_2(y)\neq(0,i)$ so that $\theta_i(h(x))$ and $\theta_i(h(y))$ commute by (R2) and (R3) again, and if $\ell(y)=1$,
then $\theta_i(h(x))\in H(0,i)$ and $\theta(h(y))=g_{i\oplus 1}$ commute.

We may therefore assume from now on that $\pi_1(x)=\pi_1(y)$. If $\ell(x)=\ell(y)$, then by (R2) we have $h(x)h(y)=h(y)h(x)$ and $\theta_i(h(x))\theta_i(h(y))=\theta_i(h(y))\theta_i(h(x))$,
so we may assume that $\ell(x)>\ell(y)$ without loss of generality.
Write $x=(i_1,j_1,\ldots,i_n,j_n)$, so that $\ell(x)=n>k=\ell(y)$. Let $m\leq k$ be largest such that $\pi_r(x)=\pi_r(y)$ for all $1\leq r\leq m$.
If $m<k$, then $h(x)$ and $h(y)$ commute, and so do $\theta_i(h(x))$ and $\theta_i(h(y))$.
Therefore we may assume $m=\ell(y)$. If $j_{k+1}\neq j_k$, then $h(x)$ and $h(y)$ commute, as do $\theta_i(h(x))$ and $\theta_i(h(y))$ once more.
We may therefore also assume $j_{k+1}=j_k$ as well, as well as $\pi_1(y)=\pi_1(x)=(0,i\oplus 1)$ (the case $\pi_1(y)\neq(0,i\oplus 1)$ is completely analoguous).
Then $j_2=i\oplus 1$. There are now two cases:
\begin{itemize}
\item If $k=1$, then
\begin{align*}
\theta_i(h(y))\theta_i(h(x))\theta_i(h(y))&=g_{i\oplus 1}h(i_2,j_2,\dots, i_n, j_n)g_{i\oplus 1}\\&=h(i_2\oplus 1, j_2, \dots, i_n, j_n)\\&=\theta_i(h(x)h(y)h(x)).
\end{align*}
\item If $k\geq 2$, then
\begin{align*}
\theta_i(h(y))\theta_i(h(x))\theta_i(h(y))&=h(i_2, j_2, \dots, i_k, j_k)h(i_2,j_2, \dots, i_n, j_n)h(i_2,j_2, \dots, i_k, j_k)\\&=h(i_2\oplus 1, j_2, \dots, i_n, j_n)=\theta_i(h(x)h(y)h(x)).
\end{align*}
\end{itemize}
Finally, if $j_1=i\oplus 1$, then $g_i$ commutes with $h(x)$, and $\theta_i(g_i)=h(0,i)$ commutes with $\theta_i(h(x))$. If $j_1=i$, then
\begin{align*}
\theta_i(g_i)\theta_i(h(x))\theta_i(g_i)&=h(0,i)h(0,i,i_1,j_1,\ldots,i_n,j_n)h(0,i)\\&=h(0,i,i_1\oplus 1,j_1,\ldots,i_n,j_n)\\&=\theta_i(h(i_1\oplus 1,j_1,\ldots,i_n,j_n))\\&=\theta_i(g_ih(i_1,j_1,\ldots,i_n,j_n)g_i).
\end{align*}
It follows that there is a homomorphism $\theta_i\colon H_i\to G$ with the desired requirements, and $\theta_i(H_i)=\langle \underline{H},g_{i\oplus 1}\rangle$.
\end{proof}

Letting $H=H_0$ and $\theta=\theta_0\colon H\to G$ be the group homomorphism of the above lemma (for $i=0$), then $\theta_1\circ\theta=\textup{id}_H$. In particular, $\theta$ is injective.

The following are some easy properties of the group $G$.
\begin{lemma}\label{onehellofalemma}
\textup{(i)} $[G:H] = [G:\theta(H)] = 2$ and therefore $H \lhd G$ and $\theta(H) \lhd G$; \\ 
\textup{(ii)} $\underline{H} = \langle H(0,0)\rangle \cdot\langle H(1,0)\rangle \cdot\langle H(0,1)\rangle \cdot\langle H(1,1)\rangle$;\\
\textup{(iii)} For each $x\in\{0,1\}^2$ there exists a homomorphism $\underline{H}\to\langle H(x)\rangle$
which is the identity map on $\langle H(x)\rangle$ and maps each element of $H(y)$ to $1$ for all $y\in\{0,1\}^2\setminus\{x\}$.
In particular, $\langle H(x)\rangle\cap\langle H(y)\rangle=\{1\}$ for distinct $x,y\in\{0,1\}^2$.
\end{lemma}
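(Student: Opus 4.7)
The plan is to handle (i) via an index-$2$ abelianization of $G$, and to derive (ii) and (iii) together from a single commutation observation among the four subgroups $\langle H(x)\rangle$.

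For (i), I would define a homomorphism $\chi\colon G\to \Z/2\Z$ by $\chi(g_1)=1$ and $\chi(g_0)=\chi(h(x))=0$ for all $x\in X$, and check that every defining relation of $G$ is annihilated: the relations inside $\underline{H}$ map to $0=0$; the relations $g_j^2=1$ and $g_0g_1=g_1g_0$ hold in $\Z/2\Z$; and the (twisted) commutation relations between the $g_j$'s and the $h(\cdot)$'s reduce to $j+0=0+j$. Thus $\chi$ is well-defined and vanishes on $H=\langle\underline{H},g_0\rangle$ while $\chi(g_1)=1$, so $g_1\notin H$. Conversely, the defining relations let one push every occurrence of $g_1$ in a word to the right past any element of $\underline{H}\cup\{g_0\}$ (each push either commutes or replaces an $h(i,1,\ldots)$ by $h(i\oplus 1,1,\ldots)$, still inside $\underline{H}$), and then $g_1^2=1$ collapses the tail, writing every element of $G$ as $w$ or $wg_1$ with $w\in H$. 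Hence $[G:H]=2$. The symmetric argument applies to $\theta(H)=\langle\underline{H},g_1\rangle$ using the dual homomorphism $\chi'$ with $\chi'(g_0)=1$, $\chi'(g_1)=0$; and index-$2$ subgroups are automatically normal.

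The central observation for (ii) and (iii) is that for distinct $x,y\in\{0,1\}^2$, every element of $H(x)$ commutes with every element of $H(y)$. Indeed, for generators $h(x'),h(y')$ with $\pi_1(x')=x\neq y=\pi_1(y')$, the case $\ell(x')=\ell(y')$ is (R2); otherwise, writing the shorter word on the outside of (R3), the hypothesis ``$i'_1=i_1,j'_1=j_1,\ldots$'' already fails at index $1$, so (R3) yields trivial conjugation. Therefore $\langle H(0,0)\rangle$, $\langle H(1,0)\rangle$, $\langle H(0,1)\rangle$, $\langle H(1,1)\rangle$ pairwise commute, so their product $P$ is a subgroup of $\underline{H}$, and since every generator $h(z)$ lies in $\langle H(\pi_1(z))\rangle\subseteq P$, one has $P=\underline{H}$, which is (ii).

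For (iii), I would define $\phi_x$ on generators by $\phi_x(h(z))=h(z)$ if $\pi_1(z)=x$ and $\phi_x(h(z))=1$ otherwise, and verify that each defining relation of $\underline{H}$ is preserved. Relation (R1) is automatic, and (R2) either reduces to (R2) inside $\langle H(x)\rangle$ or collapses to a trivial identity. The delicate check is (R3), which splits into four cases according to the pair $(\pi_1(y'),\pi_1(z'))$: if both equal $x$, the relation holds inside $\langle H(x)\rangle$; if both differ from $x$, both sides map to $1$; and if exactly one equals $x$, the hypothesis of (R3) fails (as $\pi_1(y')\neq\pi_1(z')$) so the right-hand side is $h(z')$, while the left-hand side reduces either to $h(y')^2=1$ (when $\pi_1(y')=x$) or to $1\cdot h(z')\cdot 1=h(z')$ (when $\pi_1(z')=x$), matching in both subcases. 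This yields the homomorphism $\phi_x\colon\underline{H}\to\langle H(x)\rangle$ with the stated properties, and the intersection claim is then immediate: if $a\in\langle H(x)\rangle\cap\langle H(y)\rangle$ with $x\neq y$, then $a=\phi_x(a)=1$. The only nontrivial step is the case analysis for (R3), but it is mechanical once one observes that the nontrivial conjugation rule in (R3) only fires when both arguments share their first coordinate $\pi_1$, which is precisely the scenario in which $\phi_x$ acts uniformly on both arguments.
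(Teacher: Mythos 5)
Your proposal is correct and follows essentially the same route as the paper: part (i) is dispatched by the obvious index computation (the paper calls it ``evident''), part (ii) by the pairwise commutation of the four subgroups $\langle H(x)\rangle$ coming from (R2) and the trivial case of (R3), and part (iii) by exhibiting a retraction of $\underline{H}$ onto $\langle H(x)\rangle$ killing the other generators --- your map $\phi_x$ is exactly the composite $\iota_x\circ\varphi_x$ that the paper obtains from the universal property of the presentation. Your version merely makes explicit the relation-checking (in particular the four-way case analysis for (R3)) that the paper leaves to the reader.
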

\begin{proof}
(i) is evident and (ii) follows from the commutation relations.
To see that (iii) holds, notice for $x\in\{0,1\}^2$ that the universal property of $\underline{H}$
yields a surjective homomorphism $\varphi_x$ onto the group $H'(x)$ generated by $\{h(y)\mid y\in X\,\ \pi_1(y)=x\}$ with the same relations as $\underline{H}$;
moreover, there is a natural homomorphism $\iota_x\colon H'(x)\to\underline{H}$ such that $\iota_x(h(y))=h(y)$ for all $y\in X$ with $\pi_1(y)=x$.
As $\varphi_x\circ\iota_x$ is the identity map on $H'(x)$, (iii) follows.
\end{proof}

We now consider the HNN extension $\Gamma=\mathrm{HNN}(G,H,\theta)=\langle G,\tau\mid\tau^{-1}h\tau=\theta(h)\rangle$.
We choose the coset representatives $\{1,g_1\}$ for $H$ and $\{1,g_{0}\}$ for $\theta(H)$.
Therefore, any $g\in\Gamma$ is of the form
\[
g=r_1\tau^{\eps_1}\cdots r_n\tau^{\eps_n}r_{n+1},
\]
where $r_i\in\{1,g_1\}$ if $\eps_i=1$ and $r_i\in\{1,g_0\}$ if $\eps_i=-1$, for all $1\leq i\leq n$.

Let $K_{-1}$ and $K_1$ be the quasi-kernels of the HNN extension $\Gamma$.

\begin{lemma}\label{easyquasi}
The quasi-kernels of $\Gamma$ satisfy $\langle H(0,0)\rangle\subseteq K_{-1}$ and $\langle H(0,1)\rangle\subseteq K_1$.
\end{lemma}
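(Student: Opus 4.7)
The plan is to prove both containments simultaneously by induction on the length of the conjugating element in its normal form. Since $K_\eps=\bigcap_{r\in\Gamma\setminus T_\eps^\dagger}rHr^{-1}$, the two desired inclusions are equivalent to the pair of statements: (i) $r^{-1}\langle H(0,0)\rangle r\subseteq H$ for every $r\in\Gamma\setminus T_{-1}^\dagger$; and (ii) $r^{-1}\langle H(0,1)\rangle r\subseteq H$ for every $r\in\Gamma\setminus T_1^\dagger$. The base case $r\in G$ is immediate from $\underline{H}\subseteq H\lhd G$ (Lemma~\ref{onehellofalemma}(i)).

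For the inductive step of (i), since $S_{-1}=\{1,g_1\}$ and $S_1=\{1,g_0\}$, any $r\in\Gamma\setminus T_{-1}^\dagger$ of length $\geq 1$ has normal form beginning with one of $\tau$, $g_1\tau$, or $g_0\tau^{-1}$, and I would write $r=\rho\tilde r$ accordingly, with $\lvert\tilde r\rvert=\lvert r\rvert-1$. The algebraic ingredients, all direct from the defining relations and the formulas for $\theta_0,\theta_1$, are: $g_1$ centralises $\langle H(0,0)\rangle$ while $g_0\langle H(0,0)\rangle g_0=\langle H(1,0)\rangle$ (from the commutation rules); the stability $\theta_0(H(0,0))\subseteq H(0,0)$ (prepend $(0,0)$); and the twist $\theta_1(H(1,0))\subseteq H(0,1)$ (prepend $(0,1)$). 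In the first two cases this rewrites $r^{-1}hr=\tilde r^{-1}\theta_0(h)\tilde r$ with $\theta_0(h)\in\langle H(0,0)\rangle$, and in the third case $r^{-1}hr=\tilde r^{-1}\theta_1(g_0hg_0)\tilde r$ with $\theta_1(g_0hg_0)\in\langle H(0,1)\rangle$. Britton's reduced-word conditions on $r$ then force $\tilde r\in\Gamma\setminus T_{-1}^\dagger$ in the first two cases (so (i) applies inductively) and $\tilde r\in\Gamma\setminus T_1^\dagger$ in the third (so (ii) applies inductively); for instance, if $r=g_0\tau^{-1}\tilde r$ with $\eps_2=+1$, reducedness forces the initial letter of $\tilde r$ to lie outside $H$, hence to equal $g_1\neq 1$. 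Statement (ii) is handled by the symmetric argument, where the cross-case $r=g_1\tau\tilde r$ produces an element of $\langle H(0,0)\rangle$ via $\theta_0(g_1\langle H(0,1)\rangle g_1)=\theta_0(\langle H(1,1)\rangle)\subseteq\langle H(0,0)\rangle$ and invokes (i).

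The main obstacle will be the bookkeeping: for each of the three first-letter configurations of $r$ one must track which of $H(0,0),H(0,1),H(1,0),H(1,1)$ the conjugate of $h$ lands in after peeling off $\rho$, and verify that $\tilde r$ satisfies the Britton-type condition matching the correct inductive hypothesis. The twist $\theta_1(g_0\cdot g_0)$ carrying $\langle H(0,0)\rangle$ into $\langle H(0,1)\rangle$ (and symmetrically) is precisely what forces the mutual-induction format, since neither of the two containments can be closed without simultaneously establishing the other.
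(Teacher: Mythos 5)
Your proof is correct and is essentially the paper's argument: both are inductions on the syllable length of the conjugating element that track which of the four subgroups $\langle H(x)\rangle$, $x\in\{0,1\}^2$, the conjugate lands in, using the commutation relations with $g_0,g_1$, the prepending behaviour of $\theta_0,\theta_1$, and normality of $H$ in $G$ at the end. The only organisational difference is that you peel syllables from the left and run a mutual induction on the two inclusions, whereas the paper peels from the right and indexes its invariant by the direction of the prefix; the underlying bookkeeping is identical.
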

\begin{proof}
We first define $M_{-1}=H(0,1)$ and $M_1=H(0,0)$. Let $i\in\{0,1\}$ and $s\in H(0,i)$. Let $\eps=-1$ if $i=0$ and $\eps=1$ if $i=1$.
We claim first that $g^{-1}sg\in M_{\eps'}$ for all $g\in\Gamma\setminus T_{\eps}^\dagger$ of length $\geq 1$, end letter $1$ and direction $\eps'$. 

Assume that $\lvert g\rvert=1$. We separate the cases $i=0$ and $i=1$:
\begin{itemize}
\item $i=0$. For $\eps_1=-1$ we have $r_1=g_0$, in which case $r_1sr_1\in g_0H(0,0)g_0=H(1,0)$ and $g^{-1}sg\in\theta^{-1}(H(1,0))\subseteq H(0,1)$.
If $\eps_1=1$, then $sr_1=r_1s$ and \[g^{-1}sg=\theta(s)\in H(0,0).\]
\item $i=1$. For $\eps_1=-1$ we have $sr_1=r_1s$ and $g^{-1}sg=\theta^{-1}(s)\in H(0,1)$, and for $\eps_1=1$ we have $r_1=g_1$ so that $r_1sr_1\in g_1H(0,1)g_1=H(1,1)$ and \[g^{-1}sg\in\theta(H(1,1))\subseteq H(0,0).\]
\end{itemize}

Next assume that we have proved the claim for all $g\in\Gamma\setminus T_{-1}^\dagger$ with $\lvert g\rvert=n-1$, $n\geq 2$.
Let $g\in\Gamma\setminus T_{-1}^\dagger$ with $\lvert g\rvert=n$ and write
\[
g^{-1}sg=(\tau^{-\eps_n}r_n\cdots\tau^{-\eps_1}r_1)s(r_1\tau^{\eps_1}\cdots r_n\tau^{\eps_n}).
\]
Letting $\eps'$ be the direction of $g'=r_1\tau^{\eps_1}\cdots r_{n-1}\tau^{\eps_{n-1}}$, then $s'=(g')^{-1}sg'\in M_{\eps'}$. 

If $\eps_n=-1$, then for $r_n=1$ we have $\eps_n=\eps'$, so that $s'\in M_{-1}=H(0,1)$.
Hence $g^{-1}sg=\tau s'\tau^{-1}=\theta^{-1}(s')\in H(0,1)$.
If $r_n=g_0$, then for $\eps=1$ we have $r_ns'r_n\in g_0H(0,0)g_0=H(1,0)$ and $g^{-1}sg=\theta^{-1}(H(1,0))\in H(0,1)$,
and for $\eps'=-1$ we have $r_ns'r_n\in g_0H(0,1)g_0\subseteq H(0,1)$ and $g^{-1}sg\in H(0,1)$.
% In all cases we have $\ell(g^{-1}sg)=\ell(s')+1$.

If $\eps_n=1$, the procedure is exactly the same: if $r_n=1$, then $s'\in M_1=H(0,0)$.
Thus $g^{-1}sg=\tau^{-1}s'\tau=\theta(s')\in H(0,0)$.
If $r_n=g_1$, then $\eps'=1$ implies $r_ns'r_n\in g_1H(0,0)g_1=H(0,0)$ and $g^{-1}sg\in\theta(H(0,0))=H(0,0)$, and $\eps'=-1$ implies $r_ns'r_n\in g_1H(0,1)g_1=H(1,1)$ and $g^{-1}sg\in\theta(H(1,1))\subseteq H(0,0)$.
This finishes the proof of the claim.

For any $g\in\Gamma\setminus T_{\eps}^\dagger$ of length $\geq 1$, then there exists $t\in G$ such that $gt\in\Gamma$ has end letter $1$.
Thus $g^{-1}sg\in t(H(0,0)\cup H(0,1))t^{-1}\subseteq tHt^{-1}=H$, since $H$ is normal in $G$. Therefore $H(0,0)\subseteq K_{-1}$ and $H(0,1)\subseteq K_1$.
\end{proof}

To prove the reverse inclusions, we need two preparatory lemmas that describe how we may ``reduce'' a given element in $\{h(x)\mid x\in X\}$ by means of conjugation by elements of $\Gamma$.

\begin{lemma}\label{thereducer}
Let $x\in X$ and $n=\ell(x)$. Define
\[
r_j(x)=\left\{\begin{array}{cl}\tau^{-1}&\text{when }\pi_j(x)=(0,0),\\g_0\tau^{-1}&\text{when }\pi_j(x)=(1,0),\\
\tau&\text{when }\pi_j(x)=(0,1),\\g_1\tau&\text{when }\pi_j(x)=(1,1),\end{array}\right.
\]
and let $r(x)=r_1(x)\cdots r_n(x)$.
Furthermore, let $i\colon X\to\{0,1\}$ be given by
\[
i(x)=\left\{\begin{array}{cl}0&\text{if }\pi_n(x)\in\{(0,0),(1,0)\},\\1&\text{if }\pi_n(x)\in\{(0,1),(1,1)\}.\end{array}\right.
\]
Then $r(x)\in\Gamma$ is the unique element in $\Gamma$ with end letter $1$, such that $\lvert r(x)\rvert=n$ and $r(x)^{-1}h(x)r(x)\in\{g_0,g_1\}$, in which case $r(x)^{-1}h(x)r(x)=g_{i(x)}$.
The resulting map $r\colon X\to\Gamma$ is injective.
\end{lemma}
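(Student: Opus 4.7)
The plan is to establish four claims: (a) $r(x)$ is in normal form of length $n$ with end letter $1$; (b) the conjugation identity $r(x)^{-1}h(x)r(x)=g_{i(x)}$; (c) uniqueness of $r(x)$ among elements of $\Gamma$ of length $n$ and end letter $1$ that conjugate $h(x)$ into $\{g_0,g_1\}$; and (d) injectivity of $r$.

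For (a), each block $r_k(x)$ has the shape $g_k\tau^{\eps_k}$ with $g_k\in S_{-\eps_k}$, where $S_{-1}=\{1,g_1\}$ and $S_1=\{1,g_0\}$. The four values of $\pi_k(x)\in\{0,1\}^2$ correspond bijectively to the four pairs $(g_k,\eps_k)$, so each block is itself in normal form. The adjacency condition of the normal form---$g_{k+1}=1$ forces $\eps_k=\eps_{k+1}$---translates to: $i_{k+1}=0$ forces $j_k=j_{k+1}$. This is exactly the constraint $(j_k,i_{k+1},j_{k+1})\notin\{(0,0,1),(1,0,0)\}$ defining $X$. Hence $r(x)$ is a valid normal-form word of length $n$ with end letter $1$.

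For (b), I would induct on $n=\ell(x)$. The base case $n=1$ splits into four routine verifications using $\theta(g_0)=h(0,0)$, $\theta(h(0,1))=g_1$, and the relations describing how $g_0,g_1$ conjugate the generators $h(y)$. For $n\geq 2$, I would prove the intermediate identity $r_1(x)^{-1}h(x)r_1(x)=h(x')$ with $x'=(i_2,j_2,\ldots,i_n,j_n)$. The $G$-part of $r_1(x)$ (trivial when $i_1=0$, and $g_{j_1}$ when $i_1=1$) sends $h(x)$ to $h(0,j_1,i_2,\ldots,j_n)$ via the conjugation rule; then $\tau^{\pm 1}$-conjugation applies $\theta_0$ or $\theta_1$ in the tail-reducing branch of its definition (namely $\theta_i(h(0,i\oplus 1,\cdot))=h(\cdot)$), producing $h(x')$. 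Since $x'\in X$ (its $X$-constraints are those of $x$ for indices $\geq 2$), $\ell(x')=n-1$, and $i(x')=i(x)$, the inductive hypothesis applied to $r(x')$ combined with $r(x)=r_1(x)r(x')$ closes the induction.

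For (c), write $s=s_1\tau^{\delta_1}\cdots s_n\tau^{\delta_n}$ in normal form with end letter $1$, and set $z_0=h(x)$ and $z_k=\tau^{-\delta_k}s_k^{-1}z_{k-1}s_k\tau^{\delta_k}$, so $z_n\in\{g_0,g_1\}$. First observe that each $z_k$ is either a single generator $h(y_k)$ or lies in $\{g_0,g_1\}$: this follows because $\theta_0,\theta_1$ send generators to generators or to $\{g_0,g_1\}$, and conjugation by $g_0,g_1$ either fixes a generator or flips its first coordinate. Moreover, at each step $\ell(y_k)=\ell(y_{k-1})\pm 1$ (or $z_k\in\{g_0,g_1\}$, which forces $\ell(y_{k-1})=1$). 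The net change from $\ell(y_0)=n$ to $\ell(y_{n-1})=1$ over $n-1$ steps must be $-(n-1)$, forcing every step to be length-decreasing. The length-decreasing rule requires $s_k^{-1}h(y_{k-1})s_k$ to start with $(0,0)$ if $\delta_k=-1$ and with $(0,1)$ if $\delta_k=+1$; combined with the commutation rules for $g_0,g_1$, this pins down $(s_k,\delta_k)$ uniquely from $\pi_1(y_{k-1})=\pi_k(x)$, matching the decomposition of $r_k(x)$. Hence $s=r(x)$. The main obstacle is this length-monotonicity argument; (d) is then immediate, since uniqueness of the normal form together with the bijection between $\{0,1\}^2$ and $\{\tau^{-1},g_0\tau^{-1},\tau,g_1\tau\}$ recovers each $\pi_k(x)$ from the $k$th block of $r(x)$.
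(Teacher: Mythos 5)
Your proposal is correct and follows essentially the same route as the paper's proof: verify that $r(x)$ is a reduced word whose blocks are determined by the tuples of $x$, and derive uniqueness from the observation that each elementary conjugation changes the length of a generator by $\pm 1$, so reaching $\{g_0,g_1\}$ in $n$ steps forces every step to strip a tuple, pinning down each block. The only differences are cosmetic: you spell out the existence/conjugation identity that the paper dismisses as ``easy to check,'' and you run the uniqueness argument as a forward iteration through all $n$ blocks rather than as the paper's induction peeling off the first block.
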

\begin{proof}
It is easy to check that $r(x)$ and $i(x)$ satisfy the desired requirements.
Notice also that for all $x\in X$, the inner automorphisms $\mathrm{Ad}(\tau),\mathrm{Ad}(g_1\tau),\mathrm{Ad}(\tau^{-1}),\mathrm{Ad}(g_0\tau^{-1})$ map an element of the form $h(y)$, $\ell(y)\geq 2$,
to an element of the form $h(y')$ for which $\ell(y')\in\{\ell(y)\pm 1\}$.

For any $x\in X$, $r(x)=r_1(x)\cdots r_n(x)$ is a reduced word in $\Gamma$.
Indeed, for all $1\leq k\leq n-1$ then $\pi_k(x)=(0,1)$ or $\pi_k=(1,1)$ implies $\pi_{k+1}(x)\neq(0,0)$, meaning $r_k(x)\in\{\tau,g_1\tau\}$ implies $r_{k+1}(x)\in\{g_0\tau^{-1},\tau,g_1\tau\}$.
Similarly, one sees that $r_k(x)\in\{\tau^{-1},g_0\tau^{-1}\}$ implies $r_{k+1}(x)\in\{\tau^{-1},g_0\tau^{-1},g_1\tau\}$.
Hence injectivity of $r$ will follow from Britton's lemma, once we show that $r(x)$ is uniquely determined for all $x\in X$.

For uniqueness, let us first assume that $n=1$.
For $(i,j)\neq (0,0)$ assume that $t=t_1\tau^{\eps_1}$ satisfies $\tau^{-\eps_1}t_1^{-1}h(i,j)t_1\tau^{\eps_1}=g_i$.
As $t_1^{-1}h(i,j)t_1=h(x')$ for some $x'\in X$ of length $1$, then $\tau^{\eps_1}g_i\tau^{-\eps_1}=h(x')$.
Hence $i=0$ implies $\eps_1=-1$, in which case the equation $h(i,j)=t_1h(0,0)t_1^{-1}$ determines $t_1$; if $i=1$, then $\eps_1=1$ and $h(i,j)=t_1h(0,1)t_1^{-1}$ also determines $t_1$.
Hence $t=r_1(x)$.

For $n\geq 2$, write $x=(i_1,j_1,\ldots,i_n,j_n)$ and assume that uniqueness holds for $x'=(i_2,j_2,\ldots,i_n,j_n)$.
If $t=t_1\tau^{\eps_1}\cdots t_n\tau^{\eps_n}$ satisfies $t^{-1}h(x)t=g_i$ for some $i$, then \[(t_1\tau^{\eps_1})^{-1}h(x)(t_1\tau^{\eps_1})=h(x').\]
Now $g_i=t^{-1}h(x)t=(t_2\tau^{\eps_2}\cdots t_n\tau^{\eps_n})^{-1}h(x')(t_2\tau^{\eps_2}\cdots t_n\tau^{\eps_n})$,
so that uniqueness yields $r(x')=t_2\tau^{\eps_2}\cdots t_n\tau^{\eps_n}$ and $t=t_1\tau^{\eps_1}r(x')$.
Now $(t_1\tau^{\eps_1})^{-1}h(x)(t_1\tau^{\eps_1})=h(i_2,j_2,\ldots,i_n,j_n)$ determines $t_1$ and $\eps_1$, completing the proof.
\end{proof}
\begin{lemma}\label{thereducer2}
Let $x,y\in X$ such that either of the following conditions hold:
\begin{itemize}
\item[(i)] $\ell(x)<\ell(y)$;
\item[(ii)] $\ell(y)\leq\ell(x)$ and $\pi_i(x)=\pi_i(y)$ for all $1\leq i\leq k$ implies $k<\ell(y)$.
\end{itemize}
Then for the element $r(x)\in\Gamma$ defined in Lemma~\ref{thereducer}, we have $r(y)^{-1}h(x)r(y)=h(x')$ for some $x'\in X$. In fact, in either of the above cases, the sequence $x'$ satisfies
\[
\ell(x')=\ell(x)+\ell(y)-2\max\{k\mid \pi_i(x)=\pi_i(y)\text{ for all }i\leq k\}.
\]
\end{lemma}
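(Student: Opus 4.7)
The plan is to peel off the conjugation $r(y)^{-1}h(x)r(y)$ one factor at a time. Set $w^{(0)}=h(x)$ and $w^{(j)}=r_j(y)^{-1}w^{(j-1)}r_j(y)$ for $1\le j\le\ell(y)$, so that $w^{(\ell(y))}$ is the desired conjugate. By induction on $j$, I would show that $w^{(j)}=h(x^{(j)})$ for some $x^{(j)}\in X$, while simultaneously tracking both $\ell(x^{(j)})$ and the leading pair $\pi_1(x^{(j)})$. Writing $m=\max\{k\mid \pi_i(x)=\pi_i(y)\text{ for all }i\le k\}$, the induction splits naturally into an \emph{agreement phase} $1\le j\le m$ and a \emph{divergence phase} $m+1\le j\le \ell(y)$.

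In the agreement phase one has $\pi_1(x^{(j-1)})=\pi_j(y)$, and a case analysis over the four possible values $\pi_j(y)\in\{(0,0),(1,0),(0,1),(1,1)\}$ shows that conjugation by the corresponding $r_j(y)\in\{\tau^{-1},g_0\tau^{-1},\tau,g_1\tau\}$ strips off the leading pair: the $g_i$-factor (when present) first normalizes the leading $i$-coordinate so that the leading pair becomes $(0,j_j(y))$, after which the $\theta_i^{\pm1}$-action induced by $\tau^{\pm1}$-conjugation falls into the ``strip'' defining relation of $\theta_i$, yielding $x^{(j)}=(\pi_{j+1}(x),\dots,\pi_{\ell(x)}(x))$. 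This uses $\ell(x^{(j-1)})\ge 2$, i.e.\ $m<\ell(x)$, which under the stated hypotheses follows from $m<\ell(y)\le\ell(x)$ in case (ii), and (under the tacit assumption that $x$ is not a proper prefix of $y$) in case (i). In the divergence phase $\pi_1(x^{(j-1)})\neq\pi_j(y)$, and the same case analysis now lands in the ``prepend'' defining relation of $\theta_i^{\pm1}$, prepending the pair $(0,\,j_j(y)\oplus 1)$ to $x^{(j-1)}$ (after a possible flip of its leading $i$-coordinate by the $g_i$-factor). Each such step increases the length by one, so iterating gives $\ell(x^{(\ell(y))})=(\ell(x)-m)+(\ell(y)-m)=\ell(x)+\ell(y)-2m$, which is the claimed length formula.

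The main technical obstacle is verifying that $x^{(j)}\in X$ throughout the divergence phase, i.e.\ that no forbidden transition $(j_k,i_{k+1},j_{k+1})\in\{(0,0,1),(1,0,0)\}$ ever appears in $x^{(j)}$. For the junction between two consecutively prepended pairs, the required condition translates directly into $(j_{j-1}(y),i_j(y),j_j(y))\notin\{(0,0,1),(1,0,0)\}$, which holds because $y\in X$; at the single boundary where the last prepended pair adjoins the tail $(\pi_{m+1}(x),\dots,\pi_{\ell(x)}(x))$ of $x$, non-forbiddenness follows from $\pi_{m+1}(x)\neq\pi_{m+1}(y)$ together with $x\in X$. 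The bookkeeping is delicate because the $g_i$ factor inside $r_j(y)$ can flip the leading $i$-coordinate of the \emph{already-prepended} pair immediately before $\theta_i^{\pm 1}$ is applied, so these coordinate toggles must be propagated across steps to confirm that the growing sequence stays consistent with the forbidden-transition rules defining $X$.
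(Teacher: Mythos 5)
Your argument follows essentially the same route as the paper's proof: iterate the conjugation one factor $r_j$ at a time, strip the common prefix during an agreement phase, and show that in the divergence phase each step prepends one pair because the leading tuples never re-agree. Your deferred four-way case analysis, including the bookkeeping for the $g_i$-toggles of the leading coordinate, is precisely the content of the paper's twelve-cell table, and your observation that non-re-agreement at consecutive prepend steps reduces to the forbidden-transition condition defining $X$ is the correct mechanism; the resulting count $(\ell(x)-m)+(\ell(y)-m)$ matches the paper's length formula. One point deserves emphasis: the ``tacit assumption'' you insert in case (i), that $x$ is not a proper prefix of $y$, is genuinely needed under the literal reading of the statement --- for $x=(0,0)$ and $y=(0,0,1,0)$ one computes $r(y)^{-1}h(x)r(y)=\tau g_0\tau^{-1}\notin G$ --- but this reflects a role swap in the printed statement rather than a gap in your argument. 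The paper's own proof computes $r(x)^{-1}h(y)r(x)$, so that in case (i) the conjugated generator is indexed by the \emph{longer} sequence and survives the prefix situation with length $\ell(y)-\ell(x)>0$; this is also the form in which the lemma is applied in Lemma~\ref{thequasis}. With the roles restored, no auxiliary hypothesis is required.
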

\begin{proof}
Assume first that we have proved that the equation holds for $x,y\in X$ such that $\pi_1(y)\neq\pi_1(x)$.
If $\pi_1(y)=\pi_1(x)$, let $1\leq k\leq \min\{\ell(x),\ell(y)\}$ be largest such that $\pi_i(y)=\pi_i(x)\text{ for all }i\leq k$.
Remember that we deliberately choose $r_i(x)$ for $1\leq i\leq\ell(x)$ such that applying the inner automorphisms $\mathrm{Ad}(r_1(x)^{-1}),\mathrm{Ad}(r_2(x)^{-1}),\ldots,\mathrm{Ad}(r_{\ell(x)}(x)^{-1})$
in that order removes the first tuple, then the second tuple, until we have removed all tuples from $x$.
Hence the inner automorphism $\mathrm{Ad}(r_1(x)\cdots r_k(x))^{-1}$ will remove the first $k$ tuples from both $h(x)$ and $h(y)$.

If $\ell(y)\leq\ell(x)$, then $k<\ell(y)$. Let $x_0$ and $y_0$ be $x$ and $y$ respectively,
but without their first $k$ tuples, and observe that $r(x_0)^{-1}h(y_0)r(x_0)=h(y')$ for some $y'\in X$ such that $\ell(y')=\ell(y_0)+\ell(x_0)=\ell(y)+\ell(x)-2k$.
We then have
\begin{align*}
r(x_0)^{-1}h(y_0)r(x_0)&=r(x_0)^{-1}\varphi(h(y))r(x_0)\\&=r(x_0)^{-1}(r_1(x)\cdot r_k(x))^{-1}h(y)r_1(x)\cdot r_k(x)r(x_0)\\&=r(x)^{-1}h(y)r(x).
\end{align*}
If $\ell(y)>\ell(x)$ and $k<\ell(x)$, then the same argument applies;
finally, for $k=\ell(x)$, then the first $k$ tuples of $y$ constitute $x$, so that the $y'\in X$ such that $r(x)^{-1}h(y)r(x)^{-1}$ has length $\ell(y)-\ell(x)=\ell(x)+\ell(y)-2k$.

Now assume that $\pi_1(y)\neq\pi_1(x)$. Write $x=(i_1,j_1,\ldots,i_n,j_n)$ and define $r_0(x)=1$.
Define $\varphi_i=\mathrm{Ad}(r_i(x)^{-1})$ for all $0\leq i\leq n$ and $\Phi_i=\varphi_i\circ\cdots\circ\varphi_0$, so that $\Phi_n(h(x))=r(x)^{-1}h(x)r(x)$.
We claim for all $0\leq i\leq n$ that the first tuple of $\Phi_i(h(y))$ never agrees with the first tuple of $\Phi_i(h(x))$.
This will prove that $h(y)$ gets one tuple longer for each $i$, while $h(x)$ will eventually collapse to either $g_0$ or $g_1$. 

Let $1\leq i\leq n$. The first line in one of the 12 cells in the table below is one of the possibilities of what the first tuple of $\Phi_{i-1}(h(x))$ could be, the arrow pointing to what the first tuple of $\Phi_i(h(x))$ may be.
The next line considers $\Phi_{i-1}(h(y))$ with distinct first tuple from $\Phi_{i-1}(h(x))$, the arrow pointing to what the first tuple of $\Phi_i(h(y))$ will then be (which depends on the first tuple of $\Phi_{i-1}(h(x))$).
Notice that the pair of two digits after the arrow on the lower line inside each cell never coincides with any pair above it.

\begin{table}[!h]\begin{tabular}{|ccc|ccc|ccc|}\hline
10&$\rightarrow$&00,10,11	&01&$\rightarrow$&01,10,11	&11&$\rightarrow$&01,10,11\\
00&$\rightarrow$&01				&00&$\rightarrow$&00				&00&$\rightarrow$&00\\\hline
00&$\rightarrow$&00,10,11	&10&$\rightarrow$&00,10,11	&11&$\rightarrow$&01,10,11\\
01&$\rightarrow$&01				&01&$\rightarrow$&01				&01&$\rightarrow$&00\\\hline
00&$\rightarrow$&00,10,11	&01&$\rightarrow$&01,10,11	&11&$\rightarrow$&01,10,11\\
10&$\rightarrow$&01				&10&$\rightarrow$&00				&10&$\rightarrow$&00\\\hline
00&$\rightarrow$&00,10,11	&01&$\rightarrow$&01,10,11	&10&$\rightarrow$&00,10,11\\
11&$\rightarrow$&01				&11&$\rightarrow$&00				&11&$\rightarrow$&01\\\hline
\end{tabular}\end{table}
This completes the proof.
\end{proof}

\begin{lemma}\label{thequasis}
The quasi-kernels of $\Gamma$ are given by $K_{-1}= \langle H(0,0)\rangle$ and $K_1= \langle H(0,1)\rangle$.\end{lemma}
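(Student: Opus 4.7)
The plan is to prove the reverse inclusions to those in Lemma~\ref{easyquasi}, namely $K_{-1}\subseteq\langle H(0,0)\rangle$ and $K_{1}\subseteq\langle H(0,1)\rangle$; by the $(0,0)\leftrightarrow(0,1)$ symmetry built into the construction (together with Lemma~\ref{wordlemma}), it suffices to handle $K_{-1}$. So fix $k\in K_{-1}$. Taking $r=1$ gives $k\in H$, and by Lemma~\ref{onehellofalemma} the group $\underline{H}$ is the internal direct product $\langle H(0,0)\rangle\times\langle H(1,0)\rangle\times\langle H(0,1)\rangle\times\langle H(1,1)\rangle$ with $H=\underline{H}\cdot\{1,g_0\}$, yielding a unique decomposition $k=k_{00}k_{10}k_{01}k_{11}g_0^{\delta}$; the target is $k_{10}=k_{01}=k_{11}=1$ and $\delta=0$.

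The engine of the proof is two basic facts: $g_1\notin H$ (since $g_1\in\theta(H)\setminus\underline{H}$) and $\tau g_0\tau^{-1}\notin G$ (the word $\tau g_0\tau^{-1}$ is reduced, of HNN length $2$, because $g_0\notin\theta(H)$, so Britton's lemma applies). For each $y_0\in X$ with $\pi_1(y_0)\neq(0,0)$ I build the conjugator
\[
r_{y_0}=\begin{cases}r(y_0),& i(y_0)=1,\\ r(y_0)\,\tau^{-1},& i(y_0)=0,\end{cases}
\]
which by Lemma~\ref{thereducer} satisfies $r_{y_0}^{-1}h(y_0)r_{y_0}\in\{g_1,\,\tau g_0\tau^{-1}\}$---neither in $H$. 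A case check on $\pi_1(y_0)\in\{(1,0),(0,1),(1,1)\}$ verifies $r_{y_0}\in\Gamma\setminus T_{-1}^{\dagger}$ (the initial letter is nontrivial when $\pi_1(y_0)\in\{(1,0),(1,1)\}$, and the type is $+1$ when $\pi_1(y_0)=(0,1)$). Meanwhile, for $y\neq y_0$, Lemma~\ref{thereducer2} together with a short prefix-case computation (when $y_0$ is a prefix of $y$, using $\theta^{-1}(h(0,0,w))=h(w)$) yields $r(y_0)^{-1}h(y)r(y_0)=h(\tilde y)\in\underline{H}$, and a subsequent $\tau^{\mp 1}$-conjugation applies $\theta^{\pm 1}$, keeping the result inside $H$.

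With these tools the proof proceeds in two stages. First, eliminate $\delta$: conjugating $k$ by $r=g_0\tau^{-1}$ sends $g_0\mapsto\tau g_0\tau^{-1}$ and every other piece into $H$, so the result has the form $A_1(\tau g_0\tau^{-1})^{\delta}A_2$ with $A_1,A_2\in H$; pushing $A_1,A_2$ across $\tau^{\pm 1}$ via $A\tau=\tau\theta(A)$ produces a word in HNN normal form of reduced length $2\delta$, which is in $H$ only when $\delta=0$. Second, one shows inductively on $\ell(y_0)$ that no generator $h(y_0)$ with $\pi_1(y_0)\neq(0,0)$ contributes to $k$. At length $1$ the map $\deg_{(i,j)}\colon\langle H(i,j)\rangle\to\mathbb{Z}/2$ sending $h(i,j)\mapsto 1$ and every other generator to $0$ is a well-defined homomorphism (the relations (R1)--(R3) trivially respect it when the singled-out generator has the smallest length), and the $r_{y_0}$-conjugation argument forces $\deg_{(i,j)}(k_{ij})=0$. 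For the inductive step, once all generators of length $\le n$ have been excluded from $k$, the coefficient of each length-$(n+1)$ generator becomes a well-defined homomorphism on the appropriate quotient of $\langle H(\pi_1(y_0))\rangle$, and the same $r_{y_0}$-conjugation forces it to vanish, completing the induction.

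The principal obstacle is precisely the inductive step: because of (R3) the naive ``coefficient of $h(y_0)$'' homomorphism on the non-abelian group $\langle H(\pi_1(y_0))\rangle$ is obstructed whenever $\ell(y_0)\ge 2$. However, every (R3) instance that would obstruct well-definedness of the length-$(n+1)$ coefficient involves at least one strictly shorter generator, which by the inductive hypothesis has already been shown to be absent from $k$; quotienting by those shorter generators restores well-definedness of the coefficient map in exactly the shape needed. All remaining checks are routine: the normal-form computation showing $A_1\tau g_0\tau^{-1}A_2\notin G$ for arbitrary $A_1,A_2\in H$ reduces, after pushing through $\tau$, to reading off the middle coset representative of $\theta(A_1)g_0\theta(A_2)\in G$ modulo $\theta(H)$, which is $g_0\notin\theta(H)$ and therefore exhibits a reduced length-$2$ word by Britton's lemma.
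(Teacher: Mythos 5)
Your overall skeleton matches the paper's: prove the reverse inclusions to Lemma~\ref{easyquasi}, dispose of the $g_0$-component and of elements outside $\underline{H}$ essentially as the paper does, and for the remaining elements of $\underline{H}$ use the reducers of Lemmas~\ref{thereducer} and~\ref{thereducer2} to conjugate one offending generator to some $g_m$ while everything else stays in $\underline{H}$, then push out of $H$ with one further conjugation. The divergence, and the gap, lies in how you certify that an offending generator genuinely ``occurs'' in $k_{ij}$. The paper does this by fixing a canonical expression --- a product $h(x_1)\cdots h(x_n)$ with $n$ minimal, lengths non-decreasing and all factors distinct --- and isolating the \emph{first} factor with $\pi_1(x_i)\neq(0,0)$; minimality of $i$ is exactly what makes Lemma~\ref{thereducer2} applicable to all remaining factors. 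You instead try to turn ``the coefficient of $h(y_0)$'' into a homomorphic invariant, and the quotient you take goes the wrong way. In the quotient of $\langle H(\pi_1(y_0))\rangle$ by the shorter generators, each instance of (R3) with a conjugator $h(y')$ of length $k\leq n$ acting nontrivially collapses to the \emph{identification} $h(y)=h(y'')$ between two length-$(n+1)$ generators differing in the entry $i_{k+1}$; so the individual coefficient of $h(y_0)$ is still not well defined there --- only its sum over the (R3)-orbit of $y_0$ is. Vanishing of those orbit-sums does not exclude contributions such as $h(y)h(y'')=(h(y)h(y'))^2$, a nontrivial element of $\langle H(\pi_1(y_0))\rangle$ lying outside $\langle H(0,0)\rangle$ on which every one of your invariants vanishes. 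As stated, the inductive step therefore does not close.

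The step can be repaired, but it needs an ingredient you have not supplied: quotient by the \emph{longer} generators instead. Writing $N_m$ for the subgroup of $\langle H(\pi_1(y_0))\rangle$ generated by the $h(y)$ with $\ell(y)\geq m+1$, one has $N_{n+1}\trianglelefteq N_n$ (conjugation via (R3) preserves length), the inductive hypothesis becomes $k_{ij}\in N_n$, and the coefficient of $h(y_0)$ should be read off in $N_n/N_{n+1}$. For its vanishing to force $k_{ij}\in N_{n+1}$ one must know that $N_n/N_{n+1}$ is elementary abelian and \emph{freely} generated by the images of the length-$(n+1)$ generators. That is a genuine structural fact about $\underline{H}$ (essentially the iterated semidirect-product description implicit in Lemma~\ref{locfin}) requiring its own retraction-type argument in the spirit of Lemma~\ref{onehellofalemma}(iii); note that the naive map killing both the shorter and the longer generators does not respect (R3), so this is not automatic. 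None of this appears in your proposal, and it is precisely the bookkeeping that the paper's choice of a minimal ordered word sidesteps. The remaining ingredients --- the elimination of $\delta$, the verification that $r_{y_0}$, suitably extended, lies in $\Gamma\setminus T_{-1}^\dagger$, and the final conjugation out of $H$ --- agree with the paper and are fine.
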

\begin{proof}
We established the inclusions ``$\supseteq$'' in Lemma~\ref{easyquasi}.
Showing that $K_{-1}\subseteq \langle H(0,0)\rangle$ is equivalent to showing that for all $g\notin \langle H(0,0)\rangle$, there exists $r\in \Gamma\setminus T_{-1}^\dagger$ such that $r^{-1}gr\notin H$.

Of course, if $g\notin H$, just take $r=e$. If $g=hg_0$ for $h\in\underline{H}$, then take $r=g_0\tau^{-1}$, so that $\tau g_0gg_0\tau^{-1}=\tau g_0\tau^{-1}\theta^{-1}(h)\notin H$.
Since $H=\underline{H}\cup g_0\underline{H}$, we may assume from this point on that $g\in\underline{H}$.
Due to Lemma~\ref{onehellofalemma}, we may write \[g=\prod_{x\in\{0,1\}^2}g_x,\] where $g_x$ is a product of the form $\prod_{k=1}^{n_x}h(\omega_{x,k})$ where $\pi_1(\omega_{x,k})=x$.
We can assume that $n_x$ is the smallest positive integer such that $g_x$ is a product of $n_x$ elements from $H(x)$.

Let $x\in\{0,1\}^2$. By means of the commutation relations in $\underline{H}$, we may assume that $\ell(\omega_{x,k})\leq\ell(\omega_{x,k+1})$ for all $1\leq k<n_x$.
If $n_x\geq 2$, assume that $\omega_{x,k}=\omega_{x,m}$ for some $1\leq k<m\leq n_x$. Then $\ell(\omega_{x,k})=\ell(\omega_{x,k'})$ for all $k\leq k'\leq m$,
so that
\[
h(\omega_{x,k})h(\omega_{x,k+1})\cdots h(\omega_{x,m})=h(\omega_{x,k})^2h(\omega_{x,k+1})\cdots h(\omega_{x,m-1}),
\]
contradicting the assumption that $n_x$ is smallest possible.
Hence $\omega_{x,k}\neq \omega_{x,m}$, so that $h(\omega_{x,k})\neq h(\omega_{x,m})$ for all distinct $k,m\in\{1,\ldots,n_x\}$.

Since $h(\omega_{x,k})$ and $h(\omega_{y,k'})$ commute for distinct $x,y\in\{0,1\}^2$ and $k,k'$, we can reorder the factors and write \[g=h(x_1)h(x_2)\cdots h(x_n),\]
where $n$ is smallest possible, $\ell(x_i)\leq\ell(x_{i+1})$ for all $1\leq i\leq n-1$ and $h(x_i)\neq h(x_j)$ for all $i\neq j$ by what we saw above.
Since $g\notin\langle H(0,0)\rangle$, there is a smallest $1\leq i\leq n$ such that $\pi_1(x_i)\neq (0,0)$.
Then the element $r(x_i)\in\Gamma$ of Lemma~\ref{thereducer} satisfies $r(x_i)^{-1}h(x_i)r(x_i)=g_m$ for some $m\in\{0,1\}$, and $r(x_i)\in\Gamma\setminus T_{-1}^\dagger$.
Moreover, $r(x_i)^{-1}h(x_j)r(x_i)\in\underline{H}$ for all $j\neq i$ by Lemma~\ref{thereducer2},
since the existence of $1\leq j<i$ such that $\pi_k(x_j)=\pi_k(x_i)$ for all $1\leq k\leq\ell(\omega_j)$ would contradict the minimality of $i$.
Hence \[r(x_i)^{-1}gr(x_i)\in \underline{H}g_m\underline{H}=g_m\underline{H}.\]
If $m=0$, let $r=r(x_i)g_0\tau^{-1}$, and if $m=1$, let $r=r(x_i)g_1\tau$.

The proof that $K_1\subseteq\langle H(0,1)\rangle$ is completely analoguous.
\end{proof}

\begin{lemma}\label{locfin}
The group $G$ is locally finite and hence amenable.
\end{lemma}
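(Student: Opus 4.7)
The plan is to show that $G$ is locally finite by exhibiting it as an ascending union of finite subgroups indexed by the lengths of the sequences $x \in X$ involved. For each $n \geq 1$, set
\[
L_n = \{h(x) \mid x \in X,\ \ell(x) = n\}, \qquad F_n = \langle L_1 \cup \cdots \cup L_n\rangle \subseteq \underline{H}.
\]
Since every $x \in X$ with $\ell(x) = n$ lies in the finite set $\{0,1\}^{2n}$, each $L_n$ is finite. The strategy is to prove by induction on $n$ that $F_n$ is finite, and then that $G_n := \langle F_n, g_0, g_1\rangle$ is finite. Any finite subset of $G$ involves only finitely many generators, hence sits inside some $G_n$; this yields local finiteness, and amenability then follows, since every locally finite group is amenable (as a direct limit of finite, hence amenable, groups).

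For the base case $n = 1$, relations (R1) and (R2) make $F_1$ an elementary abelian $2$-group on at most $4$ generators. For the inductive step, (R1) and (R2) likewise show that $\langle L_{n+1}\rangle$ is a finite elementary abelian $2$-group. The crux is (R3): for any $h(y)$ with $\ell(y) \leq n$ and any $h(x) \in L_{n+1}$, the conjugate $h(y) h(x) h(y)$ is either $h(x)$ or an element of $L_{n+1}$ obtained by flipping a single $i$-coordinate of $x$. Thus $F_n$ normalizes $\langle L_{n+1}\rangle$, and $F_{n+1} = F_n \cdot \langle L_{n+1}\rangle$ is a product of two finite subgroups, hence finite.

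To incorporate $g_0$ and $g_1$, the defining commutation and conjugation relations give, for each $j \in \{0,1\}$ and each $h(x) \in L_k$, that $g_j h(x) g_j$ still lies in $L_k$: it equals $h(x)$ when $\pi_1(x)$ has second coordinate $j \oplus 1$, and otherwise is obtained from $h(x)$ by flipping the first $i$-coordinate. Combined with $g_j^2 = 1$ and $g_0 g_1 = g_1 g_0$, this shows that $\langle g_0, g_1\rangle$ has order at most $4$ and normalizes every $F_n$; hence $G_n = F_n \cdot \langle g_0, g_1\rangle$ is finite. Any finite subset $S \subseteq G$ lies in some $G_n$ (take $n$ larger than the maximum length of sequences appearing in $S$), so $\langle S\rangle$ is finite, establishing local finiteness of $G$.

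The main obstacle is verifying that conjugation by lower-level generators genuinely preserves the level $L_{n+1}$, i.e., that the conjugate is always of the form $h(x')$ with $x' \in X$ and $\ell(x') = n+1$; in particular, one must confirm that flipping an $i$-coordinate keeps $x'$ inside $X$, which is straightforward from the observation that the forbidden triples $(0,0,1)$ and $(1,0,0)$ both have distinct first and last coordinates, whereas the flipped positions leave $j_k, j_{k+1}$ and $j_{k-1}, j_k$ untouched. Once this closure property is confirmed by direct inspection of (R3) and the defining relations for $g_0, g_1$, the induction is entirely routine.
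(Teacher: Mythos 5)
Your proof is correct and follows essentially the same strategy as the paper: filter the generators by the length of the sequences $x$, show the resulting subgroups $G_n=\langle g_0,g_1\rangle\cdot(\text{length}\le n\text{ part})$ are finite, and observe that every finitely generated subgroup lies in some $G_n$. The only difference is cosmetic: you establish finiteness of the $\underline{H}$-part by induction, using that each lower level normalizes the next level (so $F_{n+1}=F_n\cdot\langle L_{n+1}\rangle$ is a product of finite subgroups), whereas the paper writes down an explicit normal form $\prod_{s}s^{\eps_s}$ and counts; your version arguably requires slightly less bookkeeping.
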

\begin{proof}
For any non-negative integer $n\geq 0$, consider the subgroup $\underline{H}_n$ of $\underline{H}$ generated by the union of the sets $H_{x,n}=\{H(y)\mid y\in X,\ \pi_1(y)=x,\ \ell(y)\leq n\}$, $x\in\{0,1\}^2$.
As in the proof of Lemma~\ref{thequasis}, any element in the latter subgroup can be decomposed into factors $g_x$ for $x\in\{0,1\}^2$.
By assuming the number of factors in each $g_x$ to be the smallest possible, we can write $g_x=\prod_{s\in H_{x,n}}s^{\eps_s}$ for numbers $\eps_s\in\{0,1\}$, $s\in H_{x,n}$.
Therefore there are only finitely many elements in $\langle H_{x,n}\rangle$, and since the sets $H_{x,n}$ and $H_{y,n}$ commute in $\underline{H}$ for $x\neq y$, it follows that $\underline{H}_n$ is finite.

Every finitely generated subgroup of $G$ is contained in the subgroup $G_n$ generated by $g_0$, $g_1$ and $\underline{H}_n$ for some $n\geq 0$.
As $\underline{H}_n$ is invariant under conjugation by $g_0$ and $g_1$, any element in $G_n$ has a unique decomposition $g=g_0^{\eps_0}g_1^{\eps_1}h$ for $\eps_0,\eps_1\in\{0,1\}$ and $h\in\underline{H}_n$,
and so $G_n$ is also finite.
\end{proof}

\begin{theorem}
The HNN extension $\Gamma$ has the unique trace property, but is not $C^*$-simple.
\end{theorem}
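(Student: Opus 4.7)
The plan is to apply the machinery built up in the preceding lemmas (especially Lemma~\ref{onehellofalemma}, Lemma~\ref{thequasis}, Lemma~\ref{locfin}) together with Theorem~\ref{hnn-utp-cstar} and Theorem~\ref{amenablequasis}. The proof essentially reduces to two short computations about the quasi-kernels.

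First I would observe that $\Gamma$ is non-ascending, since by Lemma~\ref{onehellofalemma}(i) the indices $[G:H]$ and $[G:\theta(H)]$ both equal $2$. Next, I would compute the kernel: by definition $\ker\Gamma=K_{-1}\cap K_1$, and by Lemma~\ref{thequasis} this equals $\langle H(0,0)\rangle\cap\langle H(0,1)\rangle$. Lemma~\ref{onehellofalemma}(iii) says that $\langle H(x)\rangle\cap\langle H(y)\rangle=\{1\}$ for distinct $x,y\in\{0,1\}^2$, so $\ker\Gamma=\{1\}$. In particular the trivial group trivially has the unique trace property, so by Theorem~\ref{hnn-utp-cstar} the group $\Gamma$ has the unique trace property.

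For the failure of $C^*$-simplicity, I would use Theorem~\ref{amenablequasis}: since $\ker\Gamma=\{1\}$, $\Gamma$ is $C^*$-simple if and only if both quasi-kernels $K_{-1}$ and $K_1$ are trivial or non-amenable. By Lemma~\ref{thequasis} we have $K_{-1}=\langle H(0,0)\rangle\ni h(0,0)\neq 1$, so $K_{-1}$ is non-trivial. On the other hand, $K_{-1}$ is a subgroup of $G$, which is locally finite (and hence amenable) by Lemma~\ref{locfin}, so $K_{-1}$ is amenable. Thus the hypothesis of Theorem~\ref{amenablequasis} fails and $\Gamma$ is not $C^*$-simple.

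There is no real obstacle once the preceding lemmas are in hand; the entire proof is a two-line deduction. The ``hard part'' of establishing this example lies entirely in the earlier structural work: verifying that $\theta$ extends to a homomorphism, explicitly identifying the quasi-kernels as $\langle H(0,0)\rangle$ and $\langle H(0,1)\rangle$, and showing that $G$ is locally finite. With those in hand, the UTP/non-$C^*$-simplicity dichotomy is immediate from the general theory of Sections~\ref{treesect} and~\ref{graphs}.
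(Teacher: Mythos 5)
Your proof is correct and follows essentially the same route as the paper: identify the quasi-kernels via Lemma~\ref{thequasis}, deduce $\ker\Gamma=\{1\}$ from Lemma~\ref{onehellofalemma}~(iii) to get the unique trace property via Theorem~\ref{hnn-utp-cstar}, and use amenability (Lemma~\ref{locfin}) plus non-triviality of the quasi-kernels with Theorem~\ref{amenablequasis} to rule out $C^*$-simplicity. The only cosmetic difference is that you invoke the first clause of Theorem~\ref{amenablequasis} (trivial kernel) where the paper uses the second (amenable quasi-kernels); both yield the same conclusion.
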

\begin{proof}
Since $K_{-1}=\langle H(0,0)\rangle$ and $K_1=\langle H(0,1)\rangle$ by Lemma~\ref{thequasis}, we see that
\[
\ker{\Gamma}=\langle H(0,0)\rangle\cap\langle H(0,1)\rangle=\{1\}
\]
by Lemma~\ref{onehellofalemma}~(iii), meaning that $\Gamma$ has the unique trace property by Theorem~\ref{hnn-utp-cstar}.
However, $K_{-1}$ and $K_1$ are both amenable by Lemma~\ref{locfin} and non-trivial, so $\Gamma$ is not $C^*$-simple by Theorem~\ref{amenablequasis}.
\end{proof}

\begin{remark}
The interior of $\Gamma$ coincides with the normal closure of $G$ in $\Gamma$.
Indeed, $H(1,0)=\tau H(0,0,1,0)\tau^{-1}$, $H(1,1)=\tau H(0,0,1,1)\tau^{-1}$, $g_0=\tau h(0,0)\tau^{-1}$, and $g_1=\tau^{-1}h(0,1)\tau$ all belong to the normal closure of $\langle H(0,0),H(0,1)\rangle$.
Thus $G\subseteq\operatorname{int}{\Gamma}$, so the normal closure of $G$ belongs to $\operatorname{int}{\Gamma}$.
Conversely, $H(0,0),H(0,1)\subseteq G$, and the containment clearly still holds when passing to normal closures.
Moreover, consider the map $\Gamma\to\Z$ defined by $g\mapsto 0$ for $g\in G$ and $\tau\mapsto 1$.
The normal closure of $G$ coincides with the kernel of this map, and it follows that $\Gamma$ is isomorphic to a semidirect product $(\operatorname{int}{\Gamma})\rtimes\Z$.
\end{remark}

\begin{remark}
The tree of $\Gamma$ is the regular tree of branching degree~$4$.
Let $T_0$ denote the subtree consisting of all vertices $gG$, where $g\in\Gamma\setminus G$ starts with either $g_0\tau^{-1}$ or $\tau^{-1}$ on reduced form (that is, of type $-1$),
and let $T_1$ denote the subtree consisting of all vertices $gG$, where $g\in\Gamma\setminus G$ starts with either $g_1\tau$ or $\tau$ on reduced form (that is, of type $1$).
Then we can find a coloring of the edges of $T$ in such a way that the local permutation of $g_0$ is $(12)$ on $T_0$ and $\textup{id}$ on $T_1$,
the local permutation of $g_1$ is $\textup{id}$ on $T_0$ and $(34)$ on $T_1$,
and the local permutation of $\tau$ is $(23)$ everywhere (cf.~\cite{leboudec}).
Therefore $\Gamma$ is not one of the examples of \cite[Theorem~C]{leboudec}.
\end{remark} 

\begin{proposition}
The group $\Gamma$ is amenablish.
\end{proposition}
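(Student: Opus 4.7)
The plan is to show $AH(\Gamma)=\Gamma$ by first establishing the inclusion $\operatorname{int}{\Gamma}\subseteq AH(\Gamma)$. Combined with the splitting $\Gamma\cong\operatorname{int}{\Gamma}\rtimes\Z$ from the preceding remark, this would force $\Gamma/AH(\Gamma)$ to be a quotient of the abelian group $\Gamma/\operatorname{int}{\Gamma}\cong\Z$; since $\Gamma/AH(\Gamma)$ is $C^*$-simple by definition of the amenablish radical, and a $C^*$-simple abelian group is trivial, we would then conclude $AH(\Gamma)=\Gamma$, i.e., $\Gamma$ is amenablish.

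Since $AH(\Gamma)$ is normal in $\Gamma$ and $\operatorname{int}{\Gamma}$ is by definition the normal closure of the amenable quasi-kernel $K_{-1}$ (equivalently $K_1$, by Lemma~\ref{wordlemma}), the inclusion $\operatorname{int}{\Gamma}\subseteq AH(\Gamma)$ reduces to showing $K_{-1}\subseteq AH(\Gamma)$. For this I would invoke the fact, recalled in Section~\ref{boundaries} and proved in \cite[Sections~6--7]{ivanovomland}, that $\operatorname{int}(G\curvearrowright\partial_F G)$ is always amenablish; being normal and amenablish, it is contained in $AH(G)$ by the radical class property (the class of amenablish groups is closed under extensions). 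Applied to $G=\Gamma$ this gives $\operatorname{int}(\Gamma\curvearrowright\partial_F\Gamma)\subseteq AH(\Gamma)$. The remaining task is to show that every element of $K_{-1}$ lies in $\operatorname{int}(\Gamma\curvearrowright\partial_F\Gamma)$. Each $k\in K_{-1}$ fixes the extended half-tree $V(\tau^{-1})$ pointwise by Lemma~\ref{stabmerighthere}, hence fixes a nonempty open subset of $\overline{\partial T}$; one would then transfer this pointwise fixity to the Furstenberg boundary via the canonical $\Gamma$-equivariant surjection $\partial_F\Gamma\to\overline{\partial T}$.

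The hardest part of the argument is precisely this transfer step. The containment from \cite[Lemma~3.2]{bkko}-style reasoning goes the wrong way: one only automatically has $\operatorname{int}(\Gamma\curvearrowright\partial_F\Gamma)\subseteq\operatorname{int}(\Gamma\curvearrowright\overline{\partial T})=\operatorname{int}{\Gamma}$, so it is not immediate that an element fixing an open set of $\overline{\partial T}$ fixes an open set of $\partial_F\Gamma$; such an element a priori only permutes the fibres of the Furstenberg projection. Overcoming this obstacle would typically use the amenability of $K_{-1}$ to produce a $K_{-1}$-equivariant continuous section over an open set, via an invariant-mean or selection argument, and then to upgrade this section to genuine pointwise fixity. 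A viable alternative route would be to identify $\operatorname{int}(\operatorname{int}{\Gamma}\curvearrowright\partial_F\operatorname{int}{\Gamma})$ with $\operatorname{int}{\Gamma}$ directly from the graph-of-groups structure of $\operatorname{int}{\Gamma}$ (an infinite amalgamation of copies of the locally finite group $G$ over $H$), and then deduce amenablishness of $\operatorname{int}{\Gamma}$ and hence of $\Gamma$.
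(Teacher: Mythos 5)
Your overall reduction is sound as far as it goes: if $\operatorname{int}{\Gamma}\subseteq AH(\Gamma)$, then $\Gamma/AH(\Gamma)$ is a $C^*$-simple quotient of $\Gamma/\operatorname{int}{\Gamma}\cong\Z$, hence trivial, so $\Gamma$ is amenablish. But the entire content of the proposition lies in that single inclusion, and neither of your two routes to it is a proof. The first route needs $K_{-1}\subseteq\operatorname{int}(\Gamma\curvearrowright\partial_F\Gamma)$, and, as you note yourself, the only containment available by soft arguments goes the other way. This is not a technicality that an invariant-mean or equivariant-section device can be expected to repair: part (b) of the example in Section~\ref{boundaries} gives an extreme boundary action of a free group whose interior is a large non-trivial normal subgroup while $AH$ of the free group is trivial, so for a general boundary action $\Gamma\curvearrowright X$ the inclusion $\operatorname{int}(\Gamma\curvearrowright X)\subseteq AH(\Gamma)$ is simply false, and elements fixing an open subset of $\ov{\partial T}$ need not fix any open subset of $\partial_F\Gamma$. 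Any correct argument must therefore exploit the specific structure of this $\Gamma$, which your proposal never does; the second route (computing $\operatorname{int}(\operatorname{int}{\Gamma}\curvearrowright\partial_F\operatorname{int}{\Gamma})$ from the splitting of $\operatorname{int}{\Gamma}$) would indeed suffice but is only named, not carried out.

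The missing ingredient, and the route the paper takes, is a Tits-type normal subgroup argument rather than a passage to the Furstenberg boundary. Each edge stabilizer of $T$ splits as the product of the fixators of the two complementary half-trees, each a conjugate of $H\cap\theta(H)$, and these fixators generate $\operatorname{int}{\Gamma}$. By \cite[Corollary~4.6]{leboudec-prescribed}, every non-trivial normal subgroup of $\operatorname{int}{\Gamma}$ contains all the commutator subgroups $[\Gamma_e,\Gamma_e]$, $e\in E(T)$; these generate $\underline{\operatorname{int}\Gamma}=\ker(\Gamma\to\Z\times\mathbb{Z}_2\times\mathbb{Z}_2)$ (the elements with an even number of occurrences of $g_0$ and of $g_1$), which is consequently simple. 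A simple group is amenablish exactly when it is not $C^*$-simple, which holds here, and $\Gamma$ is then an extension of this simple amenablish group by the amenable group $\Z\times\mathbb{Z}_2\times\mathbb{Z}_2$; extension-closedness of the amenablish class concludes. In short: the gap in your argument is the identification of a concrete amenablish normal subgroup with amenable quotient, and closing it requires the combinatorial simplicity argument on the tree, not a transfer of pointwise fixity to $\partial_F\Gamma$.
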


\begin{proof}
Let $\Gamma^+$ be the subgroup of $\Gamma$ generated by the stabilizers of the edges of $T$,
and let $T_0$ and $T_1$ be as in the previous remark.
It is easy to see that the fixator subgroup of $T_0$ is $\theta(H)$ and the fixator subgroup of $T_1$ is $H$.
For example the edge $H$ with ends $G$ and $\tau G$ is stabilized by the subgroup $[H \cap \theta(H)] \times \tau [H \cap \theta(H)] \tau^{-1}$ which is the product of the fixators of the corresponding half-trees.
This statement holds for every edge.
It is clear that if $g\in\Gamma$, the stabilizer subgroup of the edge $grH$,
where $r \in \{ 1, g_1, \tau^{-1}, g_0 \tau^{-1} \}$ is $\gamma [H \cap \theta(H)] \gamma^{-1} \times g r [H \cap \theta(H)] r^{-1} g^{-1}$ and these groups generate $\operatorname{int}\Gamma$.
We can use \cite[Corollary~4.6]{leboudec-prescribed}, which is modeled after the Tits criterion,
to conclude that a normal subgroup $N < \operatorname{int}\Gamma$ contains the commutator groups $[\Gamma_e, \Gamma_e]$ for every $e \in E(T)$.
It easily follows that
\[
\underline{\operatorname{int}\Gamma} := \langle g [ H \cap \theta(H), H \cap \theta(H) ] g^{-1} \mid g \in \Gamma \rangle
\]
consists of all elements of $\operatorname{int}\Gamma$ with even number of $g_0$ and even number of $g_1$ in their products.
Therefore
\[
\underline{\operatorname{int}\Gamma} = \operatorname{ker}(\Gamma \to \mathbb{Z} \times \mathbb{Z}_2 \times \mathbb{Z}_2),
\]
where the homomorphism is defined on the generators of $\Gamma$ as follows
\[
\tau \mapsto (1,0,0), \quad g_0 \mapsto (0,1,0), \quad g_1 \mapsto (0,0,1).
\]
We conclude that $\underline{\operatorname{int}\Gamma}$ is a simple amenablish group.
Since the class of amenablish groups is closed under extensions, it follows that $\Gamma$ is amenablish.
\end{proof}

\begin{appendix}

\section{Compactness of \texorpdfstring{$T\cup\partial T$}{an extended tree}}\label{compactness}

We include a proof of the following result of Monod and Shalom from \cite[Section~4.1]{monodshalom}.
\begin{theorem}\label{compact4ever}
Let $T$ be a tree with boundary $\partial T$. Then $T\cup\partial T$ is compact and totally disconnected in the shadow topology.
Moreover, $T\cup\partial T$ is metrizable if and only if $T$ is countable.
\end{theorem}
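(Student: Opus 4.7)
The plan is to prove compactness by embedding $T\cup\partial T$ as a closed subset of a Tychonoff product, extract total disconnectedness from the clopen character of the generating subbase, and handle metrizability through Urysohn's theorem combined with a disjoint-open-sets counting argument.

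For compactness, I would introduce $\Phi\colon T\cup\partial T\to\{0,1\}^E$ defined by $\Phi(x)(e)=1$ if $x\in Z(e)$ and $0$ otherwise. Since removing an edge $e$ partitions $T$ into two components and each ray class is eventually contained in one such component (hence admits a representative passing through either $s(e)\to r(e)$ or $r(e)\to s(e)$), one has $T\cup\partial T=Z(e)\sqcup Z(\bar e)$, so $\Phi$ is well-defined with $\Phi(x)(e)+\Phi(x)(\bar e)=1$. Then $\Phi$ is injective (distinct points are separated by the shadow of some edge on the geodesic or ray connecting them), and the shadow subbase $\{Z(e)\}_{e\in E}$ is precisely the $\Phi$-pullback of the standard product subbase, so $\Phi$ is a homeomorphism onto its image. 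Compactness thus reduces to showing $\Phi(T\cup\partial T)$ is closed in $\{0,1\}^E$, whereupon Tychonoff supplies the conclusion.

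The heart of the argument is identifying $\Phi(T\cup\partial T)$ as the set of $f\in\{0,1\}^E$ satisfying the two closed conditions (a) $f(e)+f(\bar e)=1$ for every $e$, and (b) at every vertex $v$, at most one edge $e$ with $s(e)=v$ satisfies $f(e)=1$. The inclusion $\Phi(T\cup\partial T)\subseteq\{f:\text{(a), (b)}\}$ is immediate. For the reverse direction, given such an $f$ and a base vertex $v_0$, I would ``follow the arrows'', moving from each visited vertex along its unique outgoing edge with $f$-value $1$ whenever one exists; condition (a) prevents instantaneous backtracking, so the process either terminates at some vertex $x\in T$ (where no outgoing edge has $f$-value $1$) or produces a non-backtracking infinite sequence representing a point $x\in\partial T$. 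The hard part will be verifying that the resulting $x$ satisfies $\Phi(x)=f$ at every edge of $T$, not just those along the trail: this requires tracing geodesics from $x$ into off-trail vertices and invoking (a) and (b) repeatedly along the way. Once this is done, total disconnectedness follows because $\{Z(e)\}$ is a clopen subbase and thereby generates a clopen basis of a compact Hausdorff space.

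For metrizability, if $T$ is countable then so is $E$, giving a countable subbase and hence a countable basis for the shadow topology, so Urysohn's metrization theorem applies to this compact Hausdorff space. Conversely, if $T$ is uncountable, an easy induction on distance from a fixed vertex forces some vertex $v$ to have uncountable degree; the sets $\{Z(e):s(e)=v\}$ are then uncountably many pairwise disjoint nonempty open subsets of $T\cup\partial T$, which precludes second countability and hence metrizability.
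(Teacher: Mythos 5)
Your proof is correct and, for the compactness claim, follows essentially the same route as the paper: both arguments realize $T\cup\partial T$ inside a Cantor cube and identify its image as the closed set of ``coherent orientations'' of $T$, with the surjectivity step carried out by following the distinguished outgoing edge at each vertex and then checking the off-trail edges. The only difference in packaging is that the paper works with the Stone space of Boolean homomorphisms on the algebra generated by the extended shadows (so the coherence conditions are ``preserve all Boolean relations''), whereas you work directly in $\{0,1\}^E$ and isolate the two conditions $f(e)+f(\ov e)=1$ and ``at most one outgoing $1$ per vertex''; these are equivalent once one knows $Z(e)\sqcup Z(\ov e)=T\cup\partial T$ and that shadows of distinct edges with a common source are disjoint, and the step you flag as hard (verifying $\Phi(x)=f$ on edges off the trail, in both the terminating and non-terminating cases) is exactly the case analysis the paper carries out using its Lemmas A.2 and A.3. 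Where you genuinely diverge is the converse metrizability direction: the paper invokes a theorem of Halmos to deduce that a countable clopen basis forces the edge set to be countable, while you observe that an uncountable tree must have a vertex of uncountable degree, whose outgoing shadows give uncountably many pairwise disjoint non-empty open sets, contradicting second countability of a compact metrizable space. Your argument is more elementary and avoids the external reference; the paper's is shorter given the citation. Both are valid.
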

In \cite[Section~4.1]{monodshalom} countability of $T$ is assumed, but it is not needed to conclude compactness of the space $T\cup\partial T$.
Throughout this section, let $T$ denote a tree and for any edge $e$ in $T$, we let $Z_0(e)\subseteq T$, $Z_B(e)\subseteq\partial T$ and $Z(e)$ be as defined in Section~\ref{treesect}.
\begin{lemma}\label{elementary-tree-lemma}
Let $e,e'$ be edges in $T$.
\begin{itemize}
\item[(i)] If $Z_B(e)\cap Z_B(e')\neq\oo$, then $Z_0(e)\cap Z_0(e')$ is infinite. In particular, $Z_0(e)\cap Z_0(e')=\oo$ if and only if $Z(e)\cap Z(e')=\oo$.
\item[(ii)] $T\cup\partial T$ is the disjoint union of the clopen sets $Z(e)$ and $Z(\ov{e})$.
\end{itemize}
\end{lemma}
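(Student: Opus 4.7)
The plan is to treat the two parts essentially independently, since neither relies on the other, and both reduce to simple statements about how rays in a tree interact with a distinguished edge.

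For part (i), I would unpack the hypothesis directly. A point $x \in Z_B(e)\cap Z_B(e')$ is represented by some $r \in Z_\infty(e)$ and by some $r' \in Z_\infty(e')$, so there exist $j, j' \geq 0$ with $r_j = s(e)$, $r_{j+1} = r(e)$ and $r'_{j'} = s(e')$, $r'_{j'+1} = r(e')$. By definition of $Z_0$, this forces $r_i \in Z_0(e)$ for all $i \geq j+1$ and $r'_i \in Z_0(e')$ for all $i \geq j'+1$. Cofinality yields $k, n \geq 0$ with $r_{i+k} = r'_{i+n}$ for all $i \geq 0$, so once $i$ is large enough that both $i+k \geq j+1$ and $i+n \geq j'+1$, the common vertex $r_{i+k} = r'_{i+n}$ lies in $Z_0(e) \cap Z_0(e')$. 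Since a ray visits infinitely many distinct vertices, this produces infinitely many vertices in the intersection. The ``in particular'' then follows because $T$ and $\partial T$ are disjoint subsets of $T \cup \partial T$, giving the decomposition
\[
Z(e) \cap Z(e') = \bigl(Z_0(e) \cap Z_0(e')\bigr) \sqcup \bigl(Z_B(e) \cap Z_B(e')\bigr),
\]
and the first assertion of (i), used contrapositively, says that if $Z_0(e)\cap Z_0(e') = \emptyset$ then $Z_B(e) \cap Z_B(e') = \emptyset$ as well.

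For part (ii), I would establish $T \cup \partial T = Z(e) \sqcup Z(\overline{e})$ in two stages. At the vertex level, deleting the edge pair $\{e,\overline{e}\}$ disconnects $T$ into two half-trees, and by definition of $Z_0$ these are precisely $Z_0(e)$ (vertices on the $r(e)$ side) and $Z_0(\overline{e})$ (vertices on the $s(e) = r(\overline{e})$ side), so $T = Z_0(e) \sqcup Z_0(\overline{e})$. For the boundary, I would appeal to the standard tree fact that for any vertex $v$ and any $x \in \partial T$ there is a \emph{unique} ray starting at $v$ representing $x$; call it $\rho_v(x)$. Looking at $\rho_{s(e)}(x)$: either its first edge is $e$, in which case $\rho_{s(e)}(x) \in Z_\infty(e)$ and so $x \in Z_B(e)$, or its first step goes to some other neighbour of $s(e)$, in which case the unique ray $\rho_{r(e)}(x)$ must travel via $s(e)$ and hence lies in $Z_\infty(\overline{e})$, so $x \in Z_B(\overline{e})$. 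Uniqueness of these rays, together with the fact that cofinal rays must eventually traverse exactly the same edges in the same direction, rules out $x$ belonging to both $Z_B(e)$ and $Z_B(\overline{e})$. Combining this with the vertex decomposition gives $T \cup \partial T = Z(e) \sqcup Z(\overline{e})$. Clopenness is then immediate: each of $Z(e)$ and $Z(\overline{e})$ is open as a subbasic set of the shadow topology, and each is the complement of the other, hence also closed.

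The one step that requires a bit of care is the boundary-level argument in (ii), since the definition of $Z_B(e)$ via representatives in $Z_\infty(e)$ does not, a priori, partition $\partial T$; the uniqueness of the geodesic ray from a fixed basepoint (a consequence of $T$ having no circuits) is the right lemma to invoke. Once this is in hand, both parts of the statement follow in a few lines.
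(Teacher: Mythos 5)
Your proof is correct and follows essentially the same route as the paper: part (i) via cofinal representative rays whose common tail lies in $Z_0(e)\cap Z_0(e')$, and part (ii) via the unique ray from $s(e)$, prepending $r(e)$ when its first edge is not $e$ to land in $Z_\infty(\ov e)$. The only cosmetic difference is that the paper deduces disjointness of $Z(e)$ and $Z(\ov e)$ on the boundary by citing part (i) with $e'=\ov e$, whereas you rerun the cofinality argument directly; the content is identical.
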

\begin{proof}
(i): Any $x\in Z_B(e)\cap Z_B(e')$ is the equivalence class of a ray $(r_i)_{i\geq 0}$ such that $r_0=s(e)$ and $r_1=r(e)$, which is cofinal to a ray $(s_i)_{i\geq 0}$ such that $s_0=s(e')$ and $s_1=r(e')$.
Now there exist $k,n\geq 0$ such that $Z_0(e)\ni r_{i+k}=s_{i+n}\in Z_0(e')$ for all $i\geq 0$.

(ii): Clearly $Z_0(e)$ and $Z_0(\ov{e})$ are disjoint (so that $Z(e)\cap Z(\ov{e})=\oo$ by (i)), and their union is $T$.
Therefore let $x\in\partial T$ and assume that $x\notin Z_B(e)$. Let $(r_i)_{i\geq 0}$ be the ray in the equivalence class of $x$ such that $r_0=s(e)$.
Then $r_1\neq r(e)$, so that we may define a ray $(t_i)_{i\geq 0}$ by $t_i=r_{i-1}$ for $i\geq 1$ and $t_0=r(e)$. Now $(t_i)$ and $(r_i)$ are cofinal, and $(t_i)\in Z_\infty(\ov{e})$, meaning that $x\in Z_B(\ov{e})$.
\end{proof}

\begin{lemma}\label{here-comes-the-funk}
Suppose that two edges $e$ and $e'$ in $T$ satisfy $d(r(e),r(e'))>d(s(e),s(e'))$. Then $Z(e)\cap Z(e')=\oo$.
\end{lemma}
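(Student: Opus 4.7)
The plan is to reduce to showing $Z_0(e) \cap Z_0(e') = \emptyset$ (which suffices by Lemma~\ref{elementary-tree-lemma}(i)) and then derive a contradiction from the existence of a common vertex $v$ via a parity argument. Suppose $v \in Z_0(e) \cap Z_0(e')$, so that $d(v, s(e)) = d(v, r(e)) + 1$ and $d(v, s(e')) = d(v, r(e')) + 1$, with the geodesics $[v, s(e)]$ and $[v, s(e')]$ passing through $r(e)$ and $r(e')$ respectively.

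The first key step is a bipartiteness observation: since $T$ is bipartite (being a tree) and $r(e)$ is adjacent to $s(e)$ while $r(e')$ is adjacent to $s(e')$, the parities of $d(r(e), r(e'))$ and $d(s(e), s(e'))$ agree. Combined with the triangle-inequality bound $d(r(e), r(e')) \leq 1 + d(s(e), s(e')) + 1 = d(s(e), s(e')) + 2$ and the standing hypothesis $d(r(e), r(e')) > d(s(e), s(e'))$, this forces the equality $d(r(e), r(e')) = d(s(e), s(e')) + 2$.

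The structural consequence is that the concatenated path $r(e) \to s(e) \to \cdots \to s(e') \to r(e')$ has length exactly $d(r(e), r(e'))$ and is therefore a geodesic; in particular both $s(e)$ and $s(e')$ lie on the geodesic $[r(e), r(e')]$. Equivalently, $T\setminus\{e,e'\}$ decomposes into three components, with $r(e)$ and $r(e')$ sitting in distinct "outer" components and $s(e), s(e')$ sitting together in the "middle" component. I will then observe that $Z_0(e)$, the $r(e)$-component of $T\setminus\{e\}$, is contained in the $r(e)$-component of $T\setminus\{e,e'\}$ (since $e'$ lies on the $s(e)$-side of $e$ by the previous sentence), and symmetrically for $Z_0(e')$. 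Hence $Z_0(e)\cap Z_0(e')=\oo$, contradicting the existence of $v$.

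The main obstacle is the parity step, which I expect is the only nontrivial input: without it one could only conclude $d(r(e), r(e')) \in \{d(s(e), s(e'))+1, d(s(e), s(e'))+2\}$, and the intermediate value $d(s(e), s(e'))+1$ would correspond geometrically to configurations where $Z_0(e)\cap Z_0(e')$ is nonempty (e.g., $e$ and $e'$ nested). The bipartiteness rules this out and makes the subsequent ``three-component'' picture clean enough that disjointness is immediate.
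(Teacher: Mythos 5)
Your proof is correct, and it arrives at the same structural picture as the paper's argument --- namely that under the hypothesis both $s(e)$ and $s(e')$ lie on the geodesic from $r(e)$ to $r(e')$, so the two edges point away from each other --- but by a genuinely different route. The paper establishes this picture through case elimination and explicit distance computations: it first rules out $\{s(e),r(e)\}\subseteq Z_0(e')$, then shows the geodesic $[r(e),r(e')]$ passes through $s(e)$, and finally verifies $d(v,r(e'))>d(v,s(e'))$ for every $v\in Z_0(e)$ by a chain of equalities and inequalities. You instead use bipartiteness of the tree to force $d(r(e),r(e'))\equiv d(s(e),s(e'))\pmod 2$, which combined with the triangle bound $d(r(e),r(e'))\le d(s(e),s(e'))+2$ and the hypothesis pins down $d(r(e),r(e'))=d(s(e),s(e'))+2$ in one stroke; the geodesic structure is then immediate (a walk of length equal to the distance between its endpoints is a geodesic), and you conclude with a three-component count in $T\setminus\{e,e'\}$ rather than a distance estimate. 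The parity congruence is the new ingredient and makes the configuration argument noticeably cleaner, at the cost of invoking bipartiteness rather than staying purely metric. Two cosmetic points: the contradiction framing via a hypothetical $v\in Z_0(e)\cap Z_0(e')$ is never used, since your argument is unconditional and directly yields $Z_0(e)\cap Z_0(e')=\emptyset$; and the three-component decomposition tacitly requires $e\neq e'$ and $e\neq\ov{e'}$, which is fine because in both degenerate cases $d(r(e),r(e'))\le d(s(e),s(e'))$, but it deserves a sentence.
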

\begin{proof}
We clearly have $e\neq e'$ and $e\neq \ov{e'}$. If we had $\{s(e),r(e)\}\subseteq Z_0(e')$,
then the geodesic between $r(e')$ and $s(e)$ would not contain $r(e)$ (otherwise $d(s(e),s(e'))=1+d(s(e),r(e'))=d(r(e),r(e'))+2$),
meaning that $d(r(e),r(e'))=d(s(e),r(e'))+1=d(s(e),s(e'))$.
Hence $\{s(e),r(e)\}\subseteq Z_0(\ov{e'})$.

Now $r(e)$ is not contained in the geodesic between $s(e)$ and $r(e')$ -- otherwise \[d(s(e),s(e'))=d(r(e),s(e'))+1=d(r(e),r(e')),\] a contradiction.
Therefore the geodesic between $r(e)$ and $r(e')$ contains $s(e)$.
As $r(e')\notin Z_0(e)$ (otherwise $d(s(e'),s(e))<d(r(e'),r(e))<d(r(e'),s(e))$, so that $s(e)\in Z_0(e')$),
then for all $v\in Z_0(e)$,
\[
d(v,r(e'))=d(v,s(e))+d(s(e),r(e'))=d(v,s(e))+d(s(e),s(e'))+1>d(v,s(e')),
\]
i.e., $v\notin Z_0(e')$. Hence $Z_0(e)$ and $Z_0(e')$ are disjoint, so the claim follows from Lemma~\ref{elementary-tree-lemma}~(i).
\end{proof}

The following lemma is proved in \cite[Lemma~6.4.9]{serre}, so we omit the proof.

\begin{lemma}[``The bridge lemma'']\label{bridgelemma}
If $T_1$ and $T_2$ are subtrees of a tree $T$, and $T_1\cap T_2$ contains at most one vertex, there are unique vertices $v_1\in T_1$, $v_2\in T_2$ with $d(v_1,v_2)=d(T_1,T_2)$.
Moreover, we have $d(w_1,w_2)=d(w_1,v_1)+d(v_1,v_2)+d(v_2,w_2)$ for all $w_1\in T_1$ and $w_2\in T_2$.
\end{lemma}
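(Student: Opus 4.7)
My plan is to reduce the problem to a structural observation about the geodesic realizing $d(T_1,T_2)$, and then read off both the distance formula and uniqueness. I would first note that $d(T_1,T_2)=\min\{d(u_1,u_2):u_1\in T_1,\,u_2\in T_2\}$ is attained, since the set of distances lies in $\mathbb{N}_0$. Pick a minimizer $(v_1,v_2)$. If $T_1\cap T_2=\{v\}$, any minimizer must realize $d=0$, hence $v_1=v_2=v$; otherwise $T_1\cap T_2=\emptyset$ and $d(v_1,v_2)\geq 1$.

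The main step I would establish is that the unique $T$-geodesic $v_1=x_0,x_1,\ldots,x_n=v_2$ meets $T_1$ in exactly $\{v_1\}$ and $T_2$ in exactly $\{v_2\}$. For if some $x_i\in T_1$ with $i\geq 1$, then because $T_1$ is a subtree, the unique geodesic in $T$ from $v_1$ to $x_i$ lies in $T_1$; by uniqueness of geodesics in a tree, this geodesic coincides with the initial segment $x_0,\ldots,x_i$ of the original one. Hence $x_i\in T_1$ has distance $n-i<n$ from $v_2$, contradicting the minimality of $d(v_1,v_2)$. The argument for $T_2$ is symmetric.

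From this the distance formula drops out. Given arbitrary $w_1\in T_1,\,w_2\in T_2$, let $\alpha$ be the geodesic from $w_1$ to $v_1$ (which lies in $T_1$), let $\beta=x_0\cdots x_n$ be the bridge, and let $\gamma$ be the geodesic from $v_2$ to $w_2$ (which lies in $T_2$). The structural step gives $\alpha\cap\beta\subseteq T_1\cap\beta=\{v_1\}$ and $\beta\cap\gamma\subseteq T_2\cap\beta=\{v_2\}$, while $\alpha\cap\gamma\subseteq T_1\cap T_2$ has at most one vertex; in the borderline case $T_1\cap T_2=\{v\}$ we already have $v_1=v_2=v$ and $\beta$ is trivial, so any overlap collapses harmlessly to $v$. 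Thus $\alpha\beta\gamma$ is a simple path from $w_1$ to $w_2$ of length $d(w_1,v_1)+d(v_1,v_2)+d(v_2,w_2)$, and uniqueness of geodesics in $T$ forces it to be the geodesic from $w_1$ to $w_2$, yielding the claimed equation.

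Uniqueness of $(v_1,v_2)$ is then immediate: for any other minimizer $(v_1',v_2')$, applying the formula with $w_1=v_1',\,w_2=v_2'$ gives $d(v_1',v_2')=d(v_1',v_1)+d(v_1,v_2)+d(v_2,v_2')$, and since $d(v_1',v_2')=d(T_1,T_2)=d(v_1,v_2)$, both $d(v_1',v_1)$ and $d(v_2,v_2')$ vanish. The main point requiring care, as usual in bridge-type arguments, is the structural step and the verification that the three geodesic segments concatenate without repeated vertices; this is precisely where the hypothesis $|T_1\cap T_2|\leq 1$ is consumed.
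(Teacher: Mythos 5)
Your proof is correct. The paper itself gives no proof of this lemma, deferring to Serre's \emph{Trees} (Lemma~6.4.9); your argument---attainment of the minimum, the observation that the bridge meets each $T_i$ only in $v_i$, concatenation of the three geodesic segments into a simple (hence geodesic) path using $\lvert T_1\cap T_2\rvert\leq 1$, and uniqueness read off from the resulting distance formula---is precisely the standard argument given there.
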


\begin{proof}[Proof of Theorem~\ref{compact4ever}]
Let $X=T\cup\partial T$, and let $\mS\subseteq 2^X$ be the Boolean algebra generated by the extended shadows $\{Z(e)\}_{e\in E}$.
We consider the space $\mM$ of Boolean homomorphisms $\mS\to\{0,1\}$ of the compact Hausdorff space $2^\mS$,
i.e., maps $\mu\colon\mS\to\{0,1\}$ such that $\mu(\oo)=0$, $\mu(X)=1$, and
\[
\mu(A\cup B)=\max\{\mu(A),\mu(B)\},\  \mu(A\cap B)=\min\{\mu(A),\mu(B)\},\quad  A,B\in\mS.
\]
We claim that $\mM$ is compact.
For any $\mu\in 2^\mS$,
define maps $\varphi_1(\mu)\colon \mS\times\mS\to\{0,1\}$ by $\varphi_1(\mu)(A,B)=\mu(A\cup B)$ and $\varphi_2(\mu)\colon\mS\times\mS\to\{0,1\}$ by $\varphi_2(\mu)(A,B)=\max\{\mu(A),\mu(B)\}$.
These maps are continuous, when viewed as maps $\varphi_1 \colon 2^\mS \to  2^{\mS\times\mS}$ and $\varphi_2 \colon 2^\mS \to  2^{\mS\times\mS}$.
Since $2^{\mS\times\mS}$ is Hausdorff, $\{\mu\in 2^\mS\mid \varphi_1(\mu)=\varphi_2(\mu)\}$ is closed.
Considering intersections the same way, it follows that $\mM$ is closed in $2^\mS$.

Now let $\mu\in\mM$, and let $\mathcal O_\mu=\{e\in E\mid \mu(Z(e))=1\}$.
Then $\mathcal O_\mu$ is an orientation of $T$ by Lemma~\ref{elementary-tree-lemma}~(ii).
We claim that there exists $x\in X$ such that \[\{x\}=\bigcap_{e\in\mathcal O_\mu}Z(e).\]

Observe that for any $v\in T$ there is at most one edge $e$ such that $s(e)=v$ and $e\in\mathcal O_\mu$.
(If $s(e)=s(e')=v$ for $e\neq e'$, then $Z_0(e)\cap Z_0(e')=\oo$, but then either $\mu(Z(e))=0$ or $\mu(Z(e'))=0$.)
There are now two cases:

\medskip

\emph{Case 1: Any $v\in T$ admits one edge $e_v\in\mathcal O_\mu$ such that $s(e_v)=v$.}
For fixed $r_0\in T$, let $e_0\in\mathcal O_\mu$ such that $s(e_0)=r_0$, let $r_1=r(e_0)$.
We then let $e_1\in\mathcal O_\mu$ such that $s(e_1)=r_1$, and continue taking edges $(e_i)_{i\geq 0}$ and vertices $(r_i)_{i\geq 0}$ in $T$, until we obtain a sequence $(r_i)_{i\geq 0}$ of vertices in $T$.
Then $(r_i)_{i\geq 0}$ is a ray in $T$: if $r_{i-1}=s(e_{i-1})=r(e_i)=r_{i+1}$ and $r(e_{i-1})=s(e_i)=r_i$, then $\ov{e_i}=e_{i-1}\in\mathcal O_\mu$, a contradiction.
Let $x\in\partial T$ be the equivalence class of $(r_i)_{i\geq 0}$.

Now $x\in Z(e)$ for all $e\in\mathcal O_\mu$. If $e=e_i$ for some $i\geq 0$, the claim is evident, assume that $e\notin\{e_i\mid i\geq 0\}$.
Now let $k\geq 0$ be such that $r_k$ is the closest of the vertices in the ray $(r_i)_{i\geq 0}$ to the subtree of endpoints $s(e),r(e)$ of $e$ by the bridge lemma (Lemma~\ref{bridgelemma}).
If $s(e)$ belongs to the geodesic between $r(e)$ and $r_k$, then $d(r(e),r(e_k))=1+d(r(e),s(e_k))=2+d(s(e),s(e_k))$, meaning that $Z(e)\cap Z(e_k)=\oo$, contradicting Lemma~\ref{here-comes-the-funk}.
Hence we may let $(t_i)_{i\geq 0}$ be a ray emanating from $s(e)$, passing through $r(e)$ and the vertices $(r_i)_{i\geq k}$, meaning that $x\in Z(e)$.

We next note that $\bigcap_{n=0}^\infty Z(e_n)=\{x\}$.
Indeed, for $v\in T$, let $n\geq 0$ such that $d(r_n,v)\leq d(r_j,v)$ for all $j\geq 0$ by the bridge lemma.
Then $d(v,s(e_n))=d(v,r_n)<d(v,r_{n+1})=d(v,r(e_n))$ and $v\notin Z(e_n)$.
If $y\in\partial T$ satisfies $y\in Z(e_n)$ for all $n\geq 1$, let $(t_i)_{i\geq 0}$ be the unique ray with equivalence class $y$ and $t_0=r_0$.
For $n\geq 1$, $(t_i)_{i\geq 0}$ is cofinal to a ray $(u_j)$ such that $u_{j+1}=r_{n+1}$ and $u_j=r_n$ for some $j\geq 0$.
In particular, there exists $m\geq 0$ for which $t_m\in Z_0(e_n)$.
Since $t_0\notin Z_0(e_n)$, the geodesic from $t_0=r_0$ to $t_m$ passes through $r_n$ and $r_{n+1}$, meaning that $t_n=r_n$ and $t_{n+1}=r_{n+1}$.
Since $n$ was arbitrary, $y=x$. Therefore $\{x\}=\bigcap_{e\in\mathcal O_\mu}Z(e)$.

\medskip

\emph{Case 2. There exists $v\in T$ such that $s(e)\neq v$ for all $e\in\mathcal O_\mu$.}
For some $e\in\mathcal O_\mu$, assume that $v\in Z(\ov{e})$, so that the geodesic from $v$ to $r(e)$ passes through $s(e)$.
Let $e_0,\ldots,e_n=e$ be the edges constituting this geodesic, so that $r(e_i)=s(e_{i+1})$ for all $0\leq i\leq n-1$.
Since $s(e_{n-1})\notin Z(e_n)$, the geodesic from $w\in Z(e_n)$ to $s(e_{n-1})$ must contain $r(e_{n-1})=s(e_n)$, meaning that $Z(e_n)\subseteq Z(e_{n-1})$.
Hence $e_{n-1}\in\mathcal O_\mu$. We continue this way until we obtain $e_0\in\mathcal O_\mu$, a contradiction (since $s(e_0)=v$).
Hence $v\in Z(e)$ for all $e\in\mathcal O_\mu$.

Assume that $w\in T\setminus\{v\}$. Let $e_0,\ldots,e_n$ be a path such that $r(e_i)=s(e_{i+1})$ for all $1\leq i\leq n-1$ and $s(e_0)=v$, $r(e_n)=w$.
Then $e_0\notin\mathcal O_\mu$, so that $\ov{e_0}\in\mathcal O_\mu$ but $w\notin Z(\ov{e_0})$.
For $x\in\partial T$, let $(r_i)_{i\geq 0}$ be the unique ray representing $x$ such that $r_0=v$, and let $(e_i)_{i\geq 0}$ be the edges such that $s(e_i)=r_i$, $r(e_i)=r_{i+1}$.
Now $x\in Z(e_0)$, so that $x\notin Z(\ov{e_0})$ by Lemma~\ref{elementary-tree-lemma}~(ii), but $\ov{e_0}\in\mathcal O_\mu$ as above.
Hence $\{v\}=\bigcap_{e\in\mathcal O_\mu}Z(e)$.

\medskip

Finally, let $\mS_\mu=\{S\in\mS\mid \mu(S)=\delta_x(S)\}$.
Then $S_\mu$ is stable under complements, finite unions and intersections and $Z(e)\in S_\mu$ for all $e\in E$, so $\mS_\mu=\mS$ by minimality. Hence $\mu=\delta_x$.

We conclude that $\mM=\{\delta_x \mid  x\in X\}$.
For edges $e_1,\ldots,e_n$ in $T$ such that $Z=\bigcap_{i=1}^nZ(e_i)$ is non-empty (this is a basis element of $X$), then $U=\{f\in 2^\mS \mid  f(Z)=1\}$ is open in $2^\mS$, so that $\mM\cap U$ is open in $\mM$.
As $\delta_x\in\mM\cap U$ if and only if $x\in Z$, it follows that $\mM\cap U$ is the pre-image of the clopen basis element $Z$ under the bijection $\mM\to X$ given by $\delta_x\mapsto x$,
so that this map is continuous. In particular $X$ is compact, and since $X$ is Hausdorff, the bijection $\mM\to X$ is a homeomorphism.

Notice that $\mM$ is totally disconnected by $2^\mS$ being totally disconnected.
If $T$ is countable, then $\mS$ is countable, so that $2^\mS$ and $\mM$ are metrizable.
If $\mM$ is metrizable, it has a countable basis  $\mc{B}$ of clopen subsets.
By \cite[Corollary, p.~75]{halmos} the countable algebra generated by $\mc{B}$ contains all clopen sets of $\mM$, i.e., the edge set of $T$ is countable.
Hence $T$ is countable.
\end{proof}

\end{appendix}

\bibliographystyle{amsplain}

\end{document}